\documentclass[10pt,letterpaper]{amsart}

\usepackage{amssymb}
\usepackage{amsfonts}
\usepackage{amsmath}
\usepackage[all]{xy}
\usepackage[mathscr]{euscript}
\usepackage{stmaryrd}
\usepackage{mathtools}

\setlength{\marginparwidth}{1.2in}
\let\oldmarginpar\marginpar
\renewcommand\marginpar[1]{\-\oldmarginpar[\raggedleft\footnotesize #1]%
{\raggedright\footnotesize #1}}

\numberwithin{equation}{section}

   \newtheoremstyle{example}{\topsep}{\topsep}%
     {}
     {}
     {\bfseries}
     {}
     {\newline}
     {\thmname{#1}\thmnumber{ #2}\thmnote{ #3}}

\newtheorem{theorem}{Theorem}[section]
\newtheorem{proposition}[theorem]{Proposition}
\newtheorem{lemma}[theorem]{Lemma}
\newtheorem{corollary}[theorem]{Corollary}

\theoremstyle{definition}
\newtheorem{definition}[theorem]{Definition}

\theoremstyle{remark}
\newtheorem{rmk}[theorem]{Remark}

\theoremstyle{example}

\newcommand{\bfA}{\mathbf{A}}

\DeclareMathOperator{\struct}{\mathcal{O}}
\DeclareMathOperator{\GL}{\mathrm{GL}}
\DeclareMathOperator{\gl}{\mathfrak{gl}}
\DeclareMathOperator{\Spec}{\mathrm{Spec}}
\DeclareMathOperator{\Res}{\mathrm{Res}}
\DeclareMathOperator{\Tr}{\mathrm{Tr}}
\DeclareMathOperator{\ad}{\mathrm{ad}}
\DeclareMathOperator{\Ad}{\mathrm{Ad}}

\DeclareMathOperator{\res}{\mathrm{res}}

\DeclareMathOperator{\tMg}{\widehat{\mathcal{M}}}

\DeclareMathOperator{\A}{\mathcal{A}}
\DeclareMathOperator{\mmA}{\mathbf{\Gamma}}

\DeclareMathOperator{\M}{\mathcal{M}}
\DeclareMathOperator{\tM}{\widetilde{\mathcal{M}}}

\DeclareMathOperator{\dnu}{\d_e'}

\DeclareMathOperator{\diag}{\mathrm{diag}}
\DeclareMathOperator{\Hom}{\mathrm{Hom}}

\DeclareMathOperator{\pPo}{\pi_{\fP^1}}
\DeclareMathOperator{\ord}{\mathrm{ord}}

\DeclareMathOperator{\piU}{p_i^U}
\DeclareMathOperator{\piL}{p_i^L}
\DeclareMathOperator{\rk}{\mathrm{rank}}

\newcommand{\tN}{\widetilde{N}}

\newcommand{\TMi} {\T(\tM_i)}

\newcommand{\TGMg}{\T_{\gl_n}}

\renewcommand{\L}{\mathcal{L}}

\newcommand{\bPhi}{\bar{\Phi}}
\newcommand{\s}{\sigma}
\newcommand{\bp}{\bar{p}}
\newcommand{\vs}{\varsigma}
\newcommand{\tn}{\tilde{\nabla}}
\newcommand{\dy}{d_\De}
\newcommand{\Om}{\Omega}
\newcommand{\om}{\omega}
\newcommand{\Up}{\Upsilon}
\newcommand{\g}{\gamma}
\newcommand{\mA}{\Gamma}
\newcommand{\hmA}{\hat{\Gamma}}
\newcommand{\hr}{\hat{\rho}}
\renewcommand{\O}{\mathscr{O}}
\newcommand{\Oo}{\mathscr{O}^1}
\newcommand{\id}{\mathrm{I}}
\newcommand{\sgr}{\sg^{\bfr_\mathbf{e}}}

\renewcommand{\a}{\alpha}
\renewcommand{\b}{\beta}
\renewcommand{\d}{\delta}
\newcommand{\dAi}{A_{\tM, i}}
\newcommand{\dAj}{A_{\tM, j}}
\newcommand{\z}{\zeta}

\newcommand{\proj}{\mathbb{P}^1}
\newcommand{\n}{\nabla}
\newcommand{\pd}{\partial}

\newcommand{\C}{\mathbb{C}}
\newcommand{\Z}{\mathbb{Z}}
\renewcommand{\b}{\beta}
\newcommand{\hV}{\hat{V}}
\newcommand{\hg}{\hat{g}}
\newcommand{\hp}{\hat{p}}
\newcommand{\hn}{\hat{\n}}
\newcommand{\bn}{\bar{\n}}
\newcommand{\bnu}{\beta_\nu}
\newcommand{\bV}{\bar{V}}
\renewcommand{\o}{\varpi}
\renewcommand{\oe}{\varpi_E}
\newcommand{\ot}{\varpi_T}
\newcommand{\fo}{\mathfrak{o}}
\newcommand{\ft}{\mathfrak{t}}

\newcommand{\tfl}{\mathfrak{t}^\flat}
\newcommand{\Tfl}{T^\flat}
\newcommand{\fI}{\mathfrak{I}}
\newcommand{\fg}{\mathfrak{g}}
\newcommand{\fu}{\mathfrak{u}}
\newcommand{\fV}{\mathfrak{V}}
\newcommand{\hZ}{\widehat{Z}}

\newcommand{\pz}{\partial_z}

\newcommand{\Df}{D}

\newcommand{\De}{\Delta}

\newcommand{\fP}{\mathfrak{P}}
\newcommand{\fPD}{\fP_\De}
\newcommand{\PD}{P_\De}
\newcommand{\TD}{T_\De}
\newcommand{\bfPD}{\bar{\fP}_\De}
\newcommand{\ftD}{\ft_\De}

\newcommand{\dz}{d_z}
\newcommand{\bd}{\bar{d}}
\newcommand{\B}{\mathrm{B}}
\newcommand{\DR}{\mathrm{DR}}

\newcommand{\pfPo}{\pi_{\fP^1}}
\newcommand{\pfP}{\pi_{\fP}}
\newcommand{\bpit}{\bar{\pi}_\ft}
\newcommand{\bpiti}{\bar{\pi}_{\ft_i}}
\newcommand{\bpsi}{\bar{\psi}}
\newcommand{\tpsi}{\tilde{\psi}}
\newcommand{\pit}{\pi_\ft}
\newcommand{\piti}{\pi_{\ft_i}}
\newcommand{\bpt}{\bar{\mathrm{p}}_{\ft}}

\newcommand{\pt}{\mathrm{p}_{\ft}}

\newcommand{\tW}{\widetilde{W}}

\newcommand{\bfx}{\mathbf{x}}
\newcommand{\bfP}{\mathbf{P}}
\newcommand{\bfr}{\mathbf{r}}
\newcommand{\bfre}{\mathbf{r}_\bfe}
\newcommand{\bfg}{\mathbf{g}}
\DeclareMathOperator{\T}{T}
\newcommand{\dto}{d \hmA^0}
\newcommand{\JMU}{\mathrm{JMU}}

\newcommand{\bphi}{\bar{\phi}}
\newcommand{\bfe}{\mathbf{e}}
\newcommand{\bA}{\bar{A}}
\newcommand{\reg}{\mathrm{reg}}

\newcommand{\sg}{\gl_{n \proj}}

\newcommand{\tmu}{\tilde{\mu}}

\newcommand{\End}{\mathrm{End}}

\newcommand{\tA}{\tilde{A}}

\newcommand{\Th}{\Theta}
\newcommand{\ka}{\kappa}
\newcommand{\io}{\iota}
\newcommand{\I}{\mathcal{I}}
\newcommand{\bI}{\overline{\I}}
\newcommand{\J}{\mathcal{J}}
\newcommand{\tI}{\widetilde{\I}}
\newcommand{\tJ}{\widetilde{\J}}
\newcommand{\bN}{\overline{N}}
\newcommand{\tk}{\tilde{\ka}}
\newcommand{\tXi}{\tilde{\Xi}}

\newcommand{\vr}{\varrho}

\title[Isomonodromic deformations of connections]{Isomonodromic
  deformations of connections with singularities of parahoric formal
  type} \author{Christopher L.~Bremer}
\address{Department of Mathematics\\
  Louisiana State University\\
  Baton Rouge, LA 70803} \email{cbremer@math.lsu.edu} \author{Daniel
  S.~Sage} \email{sage@math.lsu.edu} \thanks{The research of the
  second author was partially supported by NSF grant~DMS-0606300 and
  NSA grant~H98230-09-1-0059.}  \subjclass[2010]{Primary:14D24;
  Secondary: 34Mxx, 53D30} \keywords{meromorphic connections,
  irregular singularities, moduli spaces, Poisson reduction,
  fundamental stratum, isomonodromy}

\begin{document}
\begin{abstract}
  In previous work, the authors have developed a geometric theory of
  fundamental strata to study connections on the projective line with
  irregular singularities of parahoric formal type.  In this paper,
  the moduli space of connections that contain regular fundamental
  strata with fixed combinatorics at each singular point is
  constructed as a smooth Poisson reduction.  The authors then
  explicitly compute the isomonodromy equations as an integrable
  system.  This result generalizes work of Jimbo, Miwa, and Ueno to
  connections whose singularities have parahoric formal type.
\end{abstract}
\maketitle
\section{Introduction}
The study of transcendental solutions to differential equations has a
long pedigree in 
mathematics.  An important innovation in this field
from the turn of the century was Schlesinger's observation that
families of solutions to \emph{linear} differential equations often
satisfy interesting nonlinear equations.  His work on
monodromy-preserving deformations of Fuchsian differential equations
produced a remarkable family of nonlinear differential equations which
satisfy the Painlev\'e property, namely, that the only movable
singularities are simple poles \cite{Sc}.

For example, consider a family of regular singular differential equations on  $\proj$ 
of the form
\begin{equation}\label{schl}
\frac{d}{dz} \Phi = \sum_{i = 1}^{p} \frac{1}{z - x_i} A_i \Phi,
\end{equation}
where $x_1, \ldots, x_p \in \proj$ and $A_i$ is a matrix valued
holomorphic function in the coordinates $x_1, \ldots, x_p$.  A family
of fundamental solutions to \eqref{schl} has constant monodromy if and
only if the $A_i$'s satisfy the Schlesinger equations:
\begin{equation*}
dA_i = \sum_{j \ne i} [A_i, A_j] \frac{d (x_i - x_j)}{x_i - x_j}.
\end{equation*}
(See \cite[IV.1]{Sa} for a contemporary exposition.)  In general, we
will refer to the differential equations satisfied by a family of
linear differential equations with constant monodromy as isomonodromy
equations.

It took almost seventy years for progress to be made on the
isomonodromy problem for irregular singular differential equations.
In 1981, Jimbo, Miwa, and Ueno characterized the isomonodromy
equations for certain generic families of irregular singular
differential equations~\cite{JMU}.
One explanation for the long delay is that the monodromy map for
irregular singular point differential equations is significantly more
complicated; it involves the asymptotic behavior of solutions along
Stokes sectors at each singular point, so it is less explicitly
topological than the monodromy map in the regular singular case.  The
proof of Jimbo, Miwa and Ueno required a much clearer geometric
picture of the monodromy map.

In this paper, we will think of linear differential equations in terms
of meromorphic connections $\n$ on a trivial vector bundle $V$ on
$\proj$.  After fixing a basis for $V$, we may write $\n = d + \a$,
where $d$ is the usual exterior derivative and $\a$ is an
$\End(V)$-valued meromorphic one-form.  Roughly speaking, if one
considers the moduli stack $\M_\DR$ of meromorphic connections on
$\proj$, there is a formal Riemann-Hilbert map to the moduli stack
$\M_\B$ of irregular monodromy representations.\footnote{Here, $\DR$
  stands for the ``DeRham'' theory of meromorphic connections, and
  $\B$ stands for the ``Betti'' theory of irregular monodromy
  representations.}  An explicit description of the irregular
Riemann-Hilbert correspondence may be found in \cite{Mal}.

If one can find a smooth family $\M'$ that maps to $\M_{\DR}$, the
Malgrange-Sibuya theorem \cite{Mal} implies that the fibers of the
monodromy map are a foliation of $\M'$.  In particular, the
isomonodromy equations should correspond to an integrable distribution
on $\M'$.  By an observation of Boalch \cite[Appendix]{Boa}, $\L
\subset \M'$ is a leaf of this foliation if and only if the family of
connections corresponding to $\L$ is integrable, i.e., there exists a
connection $\bn$ on $\L \times \proj$ with the property that $\bn |_{
  \{x\} \times \proj}$ is a representative of the isomorphism class $x
\in \M_{\DR}$.  Throughout the paper, we will suppress the monodromy
point of view in favor of this integrability condition.  As an
example, we describe the smooth family of framed connections
$\tM_\JMU$ which was constructed by Boalch, building on work of Jimbo, Miwa and
Ueno.

Fix a finite collection of points $(x_i)_{i \in I} \subset \proj$ and
a vector of non-negative integers $(r_i)_{i \in I}$.\footnote{There is
  a slight simplification here: Jimbo, Miwa, and Ueno and Boalch allow
  $x_i$ to vary in $\proj$.}  Points in $\tM_\JMU$ correspond to
isomorphism classes of connections $\n = d + \a$, singular at $x_i$,
with the following additional data: there is a collection of framings
$g_i \in \GL_n(\C)$ such that the Laurent expansion of $\Ad(g_i) \a$
at $x_i$ has the form
\begin{equation*}
\Ad(g_i) \a = (A_{r_i} \frac{1}{(z- x_i)^{r_i}}  + \ldots + A_1 \frac{1}{z-x_i} +A_0) \frac{dz}{z-x_i},
\end{equation*}
where $A_i \in \gl_n(\C)$ and the leading term $A_{r_i}$ is a regular
diagonal matrix.  The moduli space $\tM_\JMU$ is, in fact, smooth. The
isomonodromy equations may be expressed in terms of a Pfaffian system
involving terms $\Th_i$, which control the dynamics of the framings,
and terms $\Xi_i$, which are essentially the principal parts of the
curvature of $\bn$~\cite{JMU}.

In \cite{BrSa}, the authors describe smooth moduli spaces of framed
connections with arbitrary slope, generalizing a construction of
\cite{Boa}.  The goal of this paper is to study isomonodromic
deformations of such connections.  The primary technical tool is a
local invariant of meromorphic connections called the fundamental
stratum, which plays the role of the leading term.  A stratum is a
triple $(P, r, \b)$, consisting of a parahoric subgroup $P \subset
\GL_n(\C[[z]])$, a non-negative integer $r$, and a `non-degenerate'
linear functional on the $r^{th}$ graded piece associated to the
canonical filtration on the Lie algebra of $P$.  The relevant
condition on connections which assures smoothness of the moduli space
is that $\n$ must contain a `regular' stratum.  This approach is
described in detail in Section~\ref{formal types}.

The primary motivation behind the introduction of fundamental strata
into the study of connections comes from the geometric Langlands
program, which in this case suggests an analogy between wildly
ramified adelic representations of $\GL_n$ and irregular monodromy
representations of rank $n$ on $\proj$ (see \cite{Fre} or, for a more
physical interpretation, \cite{Wi}).  The fundamental stratum
(alternatively, the minimal $K$-type) was originally used as a tool
for classifying wildly ramified representations of a reductive group
over a $p$-adic field \cite{Bu, MoPr}.  Thus, one hopes that a
dictionary between fundamental strata and families of monodromy
representations will better illuminate the wild ramification case of
the geometric Langlands correspondence.

We conclude with a short overview of the results in this paper.
Suppose that $\bfx = (x_i)_{i \in I} \subset \proj$ is a collection of
singular points, $\bfP = (P_i)_{i \in I}$ is a collection of uniform
parahoric subgroups $P_i \subset \GL_n(\C[[z - x_i]])$, and $\bfr =
(r_i)_{i \in I}$ is a vector of non-negative integers.  In
Section~\ref{three}, we give a construction of the moduli space
$\tM(\bfx, \bfP, \bfr)$ consisting of isomorphism classes of
connections with compatible framings on $\proj$ that contain regular
strata of the form $(P_i, r_i, \b_i)$ at $x_i$.  By
Proposition~\ref{mma} and Theorem~\ref{modspace}, $\tM(\bfx, \bfP,
\bfr)$ is a Poisson manifold; moreover, the symplectic leaves
correspond to connected components of
 moduli spaces of connections with a fixed formal
isomorphism class at each singular point.  When all of the parahoric
subgroups are maximal, i.e., $P_i = \GL_n(\C[[z-x_i]])$, $\tM(\bfx,
\bfP, \bfr) = \tM_\JMU$.

In Section~\ref{four}, we calculate the isomonodromy equations for
connections corresponding to points of $\tM(\bfx, \bfP, \bfr)$
(Theorem~\ref{defeq}).  In this context, the framing data is given by
a coset $U \backslash \GL_n(\C)$, where $U\subset \GL_n(\C)$ is the
unipotent subgroup determined by $P$.  In particular, part of the
isomonodromy data is a time dependent flow on an affine bundle over a
partial flag manifold.  Since $U$ is trivial when $P$ is a maximal
parahoric subgroup, this phenomenon is absent in the isomonodromy
equations of Jimbo, Miwa and
Ueno. 


In Section~\ref{integrability}, we describe a differential ideal $\bI$
on the moduli space $\tM (\bfx, \bfP, \bfr)$ corresponding to the
isomonodromy equations.
To be precise,  we construct a differential
ideal $\I$  on a principal $\GL_n(\C)$-bundle over $\tM(\bfx, \bfP, \bfr)$ and
then show that it descends.  The main result of the paper,
Theorem~\ref{int}, states that both $\bI$ and $\I$ are integrable Pfaffian
systems.  Moreover, there is a natural correspondence between
the leaves of the foliation determined by $\bI$  and
isomonodromic deformations of framed connections.
The proof is deferred to Section~\ref{descent}.

Finally, in Section~\ref{example}, we compute an explicit example of
the isomonodromy equations in the case where $P_i$ is an Iwahori
subgroup and $r_i = 1$ for all $i$.  To the authors' knowledge, this
is a completely new example of an integrable system on a Poisson
manifold.

\section{Formal Types}\label{formal types}

In this section, we review some results from the geometric theory of
fundamental strata and recall how they may be used to associate formal
types to irregular singular connections.  Let $F= k((z))$ be field of
formal Laurent series with coefficients in a field $k$ in
characteristic zero, and let $\fo \subset F$ be the corresponding
power series ring.  Let $\hV$ be an $n$-dimensional $F$ vector space.
A lattice chain $\mathscr{L} = \{L^i\}_{i \in \Z}$ is a collection of
$\fo$-lattices in $\hV$ satisfying the following properties: $L^i
\supset L^{i+1}$, and $z L^i = L^{i+e}$ with a fixed period $e > 0$.
We say $\mathscr{L}$ is {\em uniform} if $\dim_k L^i/L^{i+1} = n/e$
for all $i$; the lattice chain is {\em complete} if $e=n$.
\begin{definition}\label{parahoric}
  A uniform parahoric subgroup $P \subset \GL(\hV)$ is the stabilizer of a
  uniform lattice chain $\mathscr{L}$, i.e., $P=\{g \in \GL(\hV)\mid g
  L^i = L^i \text{ for all } i\}$. The Lie algebra of $P$ is the
  parahoric subalgebra $\mathfrak{P} \subset \gl(\hV)$ consisting of
  $\mathfrak{P} = \{p \in \gl(\hV) \mid p L^i \subset L^i \text{ for all
  } i\}$.  Note that $\mathfrak{P}$ is in fact an associative
  subalgebra of $\gl(\hV)$.  An Iwahori subgroup $I$ is the stabilizer
  of a complete lattice chain, and an Iwahori subalgebra
  $\mathfrak{I}$ is the Lie algebra of $I$.
\end{definition}

For more details on lattice chains and the corresponding parahoric subgroups and
subalgebras, see for example~\cite{Sa00,BrSa}.

There are natural filtrations on $P$ (resp. $\fP$) by
congruence subgroups (resp. ideals).  For $r\in \Z$, define the
$\mathfrak{P}$-module $\mathfrak{P}^r$ to consist of $X \in
\mathfrak{P}$ such that $X L^i \subset L^{i+r}$ for all $i$; it is an
ideal of $\fP$ for $r\ge 0$ and a fractional ideal otherwise.
The congruence subgroup $P^r \subset P$ is then defined by $P^0 = P$
and $P^r = \id_n + \fP^r$ for $r > 0$.
Note that these ideals are multiplicative, in the sense that $\fP^r \fP^s = \fP^{r+s}$.
If we fix a form $\nu\in \Omega^1_{F/k}$ of order $-1$,
the  pairing 
\begin{equation}\label{pair}
\langle X, Y \rangle_\nu = \Res \Tr(X Y \nu)
\end{equation}
identifies $\fP^{-r}$ with $(\fP^{r+1})^\perp$ and $(\fP^{r+1} /
\fP^{s+1})^\vee$ with $\fP^{-s} / \fP^{-r}$.  There are similar
formulas for $\nu$ of arbitrary order, for example,
$(\fP^{r+1})^\perp\cong\fP^{-r+(1+\ord(\nu))e}$.  Throughout this
section, we will assume for simplicity that $\ord(\nu)=-1$, but all
definitions and results can be stated for other $\nu$.

\begin{definition}\label{stratdef}
Let $\hV$ be an $F$ vector space, and let $(P, r, \b)$ be a triple consisting of
\begin{itemize}
\item $P \subset \GL(\hV)$  a uniform parahoric subgroup;
\item $r \in \Z_{\ge 0}$, with $\gcd (r, e) = 1$;
\item $\b \in (\fP^r/\fP^{r+1})^\vee$.
\end{itemize}
After fixing $\nu$ as above, we may identify $\b$ with a coset
$\bnu + \fP^{-r+1} \subset \fP^{-r}/\fP^{-r+1}$.  Thus, we may
choose a representative $\bnu \in \fP^{-r}$ for $\b$.
We say that $(P, r, \b)$ is a \emph{uniform stratum} if 
$\bnu + \fP^{-r+1}$ contains no nilpotent
elements.
\end{definition}
In this paper, we are interested in strata that satisfy a `graded' version
of regular semisimplicity.  Fix a totally ramified field extension $E/F$ of degree $e$,
and let $\fo_E$ be the corresponding integral domain.
Let 
\begin{equation}\label{torus}
T \cong (E^\times)^{n/e} \subset \GL(\hV)
\end{equation} be a maximal torus and 
let $\ft \cong E^{n/e} \subset \gl(\hV)$ be the corresponding Cartan subalgebra.  
We denote the identity elements of the Wedderburn components of $\ft$ by $\chi_j$, and we write
$T(\fo)$ (resp. $\ft(\fo)$) for $(\fo_E^\times)^{n/e} \subset T$ (resp. $\fo_E^{n/e} \subset \ft$).
Moreover, write $\tfl$ for the $k$-span of the $\chi_j$ and $\Tfl$ for
$(\tfl)^\times$.

Now, suppose that $(P,r, \b)$ is a uniform stratum.  There is a map
$\pd_{\b, s} : \fP^{s}/\fP^{s+1} \to \fP^{s-r}/ \fP^{s-r+1}$ given by
$\pd_{\b, s}(X+\fP^s) = \ad(X)(\bnu) + \fP^{s-r+1}$.
\begin{definition}
We say that $(P, r, \b)$ is a regular stratum centralized by $T$ if
it satisfies the following conditions:
\begin{enumerate}
\item $T(\fo) \subset P$;
\item $\ker (\pd_{\b, s}) = \ft \cap\fP^{s} + \fP^{s+1}$ for all $s$;
\item  $y_\b = z^r\bnu^e+ \fP^1$ is a semi-simple element of  the algebra $\fP/ \fP^1$;
\item when $r = 0$ (and hence, $e_P = 1$), the eigenvalues of $y_\b$ 
are distinct modulo $\Z$.
\end{enumerate}
\end{definition}

In fact, by \cite[Theorem 3.6]{BrSa}, $\mathscr{L}$ induces a complete
lattice chain on each $\chi_j (\hV)$ (with period $e$).  Write $I_j$
for the corresponding Iwahori subgroup.  It follows from \cite[Lemma
2.4]{BrSa} that it is the unique lattice chain $\mathscr{L}_j$ with
$\fo_E^\times \subset I_j$.  Moreover, if we fix a uniformizer $\oe$
for $\fo_E$, then $\oe \fI_j = \fI_j^1$.  By \cite[Proposition
1.18]{Bu}, we deduce that the matrix $\ot = (\oe, \ldots, \oe) \in
\ft$ satisfies the property $\ot \fP = \fP \ot = \fP^1$.

Recall the following proposition:
\begin{proposition}[{\cite[Proposition 2.10]{BrSa}}]
\label{cores}
Let $(P, r, \b)$ be a regular stratum centralized by $T$, and let
$\bnu \in \fP^{-r}$ be a representative for $\b$.
There is a morphism of $\mathfrak{t}$-modules  $\pi_{\mathfrak{t}} : \gl(\hV) \to \mathfrak{t}$ 
satisfying the following properties:
\begin{enumerate}
\item \label{cores1} $\pi_{\mathfrak{t}}$ restricts to the identity on $\mathfrak{t}$;
\item \label{cores2} $\pi_{\mathfrak{t}} (\mathfrak{P}^\ell) \subset \mathfrak{P}^\ell$;
\item \label{cores3} the kernel of the induced map
\begin{equation*}
\bpit : (\mathfrak{t} + \mathfrak{P}^{\ell-r})/ \mathfrak{P}^{\ell-r+1} \to
\mathfrak{t}/ (\mathfrak{t} \cap \mathfrak{P}^{\ell-r+1})
\end{equation*}
is given by the image of $\ad(\mathfrak{P}^\ell) (\bnu)$ modulo $\mathfrak{P}^{\ell-r+1}$;
\item \label{cores4} if $y \in \mathfrak{t}$ and $X \in \gl(\hV)$, then
$\langle y, X\rangle_\nu = \langle y, \pi_{\mathfrak{t}} (X)\rangle_\nu$;
\item \label{cores5} let $\bar{\pi}_{\ft,\ell}:\fP^\ell/\fP^{\ell+1}\to
  \ft/\ft\cap\fP^{\ell+1}$ be the induced map, and set
  $W_\ell=\ker(\bar{\pi}_{\ft,\ell})$.  Then, the induced map
  $\ad(\bnu):W_{\ell}\to W_{\ell-r}$ is an isomorphism.
\end{enumerate}
\end{proposition}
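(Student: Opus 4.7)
The idea is to construct $\pi_\ft$ as a $\ft$-equivariant splitting of the inclusion $\ft \hookrightarrow \gl(\hV)$ compatible with the parahoric filtration, then read off properties \eqref{cores3}--\eqref{cores5} from the regularity axioms combined with uniformity of $\fP$.

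I would begin by exploiting condition (1) of the regular stratum definition, $T(\fo) \subset P$, which makes each $\fP^\ell$ stable under the adjoint action of $\ft$. Over $F$, $\gl(\hV)$ decomposes as $\ft$ plus its root spaces relative to $T$, and by Bruhat-Tits theory combined with condition (1) this decomposition is compatible with the parahoric filtration: there is a filtered $\ft$-equivariant direct sum $\gl(\hV) = \ft \oplus W$ with $\fP^\ell = (\ft\cap\fP^\ell) \oplus (W\cap\fP^\ell)$. I define $\pi_\ft$ to be the projection onto $\ft$ along $W$; properties \eqref{cores1} and \eqref{cores2} then follow by construction.

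For \eqref{cores3} and \eqref{cores5}, I would choose a representative $\bnu \in \ft$, which is available because $(P,r,\b)$ is centralized by $T$. Then $\ad(\bnu)$ is $\ft$-equivariant and preserves the splitting, sending $W_\ell := ((W\cap\fP^\ell) + \fP^{\ell+1})/\fP^{\ell+1}$ into $W_{\ell-r}$. Regularity condition (2), $\ker(\pd_{\b,\ell}) = \ft\cap\fP^\ell + \fP^{\ell+1}$, says that $\ad(\bnu): W_\ell \to W_{\ell-r}$ is injective; uniformity of $\fP$ forces $\dim_k W_\ell$ to be independent of $\ell$, so this map is in fact an isomorphism, yielding \eqref{cores5} and, via the identification of $\ker(\bpit)$ with $W_{\ell-r}$, also \eqref{cores3}. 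Property \eqref{cores4} then follows from $\ad$-invariance of the residue pairing together with commutativity of $\ft$: for $y \in \ft$ and $a \in \ft$, one has $\langle y, [a,x]\rangle_\nu = -\langle [a,y], x\rangle_\nu = 0$, so $\langle y, \cdot\rangle_\nu$ annihilates $[\ft, \gl(\hV)] \supset W$, which gives $\langle y, X\rangle_\nu = \langle y, \pi_\ft(X)\rangle_\nu$.

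The main obstacle is verifying that the root-space decomposition of $\gl(\hV)$ is genuinely compatible with the parahoric filtration in the strong sense required, i.e., that a single complement $W$ works simultaneously at every filtration level rather than just at the level of the associated graded. This is precisely where regularity condition (3) (semisimplicity of $y_\b$ in $\fP/\fP^1$) is essential: it ensures that multiplication by the uniformizer $\ot \in \ft$ respects the weight decomposition, so that the chosen complement at level zero propagates consistently through the filtration via powers of $\ot$, producing the global splitting rather than merely a graded one.
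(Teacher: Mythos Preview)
This proposition is quoted from \cite[Proposition~2.10]{BrSa} and is not proved in the present paper, so there is no proof here to compare your attempt against. Your outline is a sound strategy and is essentially how such a result is established: take $W$ to be a $\ft$-stable complement of $\ft$ in $\gl(\hV)$ that is compatible with the parahoric filtration, define $\pi_\ft$ as the projection along $W$, and then read off \eqref{cores3}--\eqref{cores5} from the regularity axioms plus uniformity of $\fP$. Your dimension argument for \eqref{cores5} (injectivity from condition~(2), equality of $\dim_k W_\ell$ from uniformity) is exactly right.

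One correction: you attribute the filtered compatibility of $W$ to regularity condition~(3) (semisimplicity of $y_\b$), but this is misplaced. What actually drives it is condition~(1), $T(\fo)\subset P$, together with the structural fact recorded just before the proposition that $\ot\fP=\fP\ot=\fP^1$. If you take $W=\ft^\perp$ under $\langle\,,\,\rangle_\nu$ (which coincides with $[\ft,\gl(\hV)]$ and makes \eqref{cores4} automatic), then associativity of the trace shows $W$ is stable under left and right multiplication by $\ft$; multiplication by $\ot^{\pm1}$ therefore carries the decomposition at level $\ell$ bijectively to level $\ell\pm1$, reducing everything to a single level. There the duality $(\fP^\ell)^\perp=\fP^{-\ell+1}$, restricted to $\ft$, gives $\pi_\ft(\fP^\ell)\subset\ft\cap\fP^\ell$: if $X\in\fP^\ell$ then $\langle y,\pi_\ft(X)\rangle_\nu=\langle y,X\rangle_\nu=0$ for all $y\in\ft\cap\fP^{-\ell+1}$, and nondegeneracy of the pairing on the graded pieces of $\ft$ forces $\pi_\ft(X)\in\fP^\ell$. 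Condition~(3) concerns $\bnu$, not the relationship between $T$ and $\fP$, and is not needed for this step.

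A smaller point: your claim $W\subset[\ft,\gl(\hV)]$ is correct but, since $T$ is anisotropic over $F$, ``root spaces'' is not available directly; you should either base-change to a splitting field for the dimension count or simply define $W=\ft^\perp$ from the outset, which makes \eqref{cores4} immediate and avoids the issue.
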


A connection on $\hV$ is a $k$-derivation $\n : \hV \to \hV \otimes
\Omega^1_{F/k}$.  If $\tau$ is a $k$-derivation on $F$, we write
$\n_\tau$ for the composition of $\n$ with the inner derivation
associated to $\tau: $ $\n_\tau (v) = i_\tau (\n (v))$.  In
particular, let $\tau_\nu$ be the derivation with the property that
$i_{\tau_\nu} (\nu) = 1$.
\begin{definition}
Let $(P, r, \b)$ be a uniform stratum.  When $r \ge 1$, 
we say that $(\hV, \n)$ contains  $(P, r, \b)$ if $\n_{\tau_\nu} (L^i) \subset L^{i-r} $
and $(\n_{\tau_\nu} - \b_\nu) (L^i) \subset L^{i-r+1}$ for all $i$.  When
$r = 0$, and thus $e = 1$, we say that $(\hV, \n)$ contains $(P, 0, \b)$
if $(\n_{\tau_\nu} - \b_\nu) (L^i) \subset L^{i+1}$  for some lattice $L^i$.
\end{definition}

Given a trivialization $\phi : \hV \overset{\sim}{\to} F^n$, we write $[\n]_\phi$
for the matrix of $\n$ with respect to the standard basis of $F^n$.   By the Leibniz rule,
$\n = \dz + [\n]_\phi$ where $\dz$ is the usual exterior $k$-differential on $F$.
The group $\GL_n(F)$ acts transitively on the space of trivializations for $F$, and
\begin{equation}\label{gaugetrans}
[\n]_{g \phi} = g \cdot [\n]_\phi := \Ad(g) [\n]_\phi - (\dz g) g^{-1}.
\end{equation}
If we have fixed a base trivialization, we will shorten $[\n]_{g\phi}$  to $[\n]_g$
and $[\n]_\phi$ to $[\n]$.
In general, the left action $g \cdot$ on $\gl_n(F) \otimes \Omega^1_{F/k}$
is called a gauge transformation, and we say that two matrices
$X, Y \in \gl_n(F) \otimes \Omega^1_{F/k}$  are gauge equivalent if there
exists $g \in \GL_n(F)$ such that $g \cdot X = Y$.  Thus,
if $\n$ and $\n'$ are connections on $\hV$, and $[\n]_\phi$ is gauge equivalent
to $[\n']_\phi$, then $(\hV, \n)$ and $(\hV, \n')$ are isomorphic.

Now, suppose that $(P, r, \b)$ is a regular stratum in $\GL_n(F)$
centralized by a torus $T$.  We denote the pullback of $P$ and $\b$ to
$\GL(\hV)$ by $P^\phi$ and $\b^\phi$, respectively.
\begin{theorem}{\cite[Theorem 4.13]{BrSa}}\label{exft}
  If $(\hV, \n)$ contains the stratum $(P^\phi, r, \b^\phi)$, then
  there exists $p \in P^1$ and a regular element $A_\nu \in \ft\cap\fP^{-r}$ such
  that $p \cdot [\n]_\phi = A_\nu \nu$.  Furthermore, the orbit of
  $A_\nu \nu$ under $P^1$-gauge transformations contains
  $(A_\nu+\fP^1) \nu $.
 \end{theorem}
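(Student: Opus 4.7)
The strategy is successive approximation, using Proposition~\ref{cores}(3) and~(5) to iteratively gauge $[\n]_\phi$ into toral form.

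Set $A := [\n]_\phi / \nu$. The stratum hypothesis gives $A \in \bnu + \fP^{-r+1}$. Assume $r \ge 1$; the case $r = 0$, $e = 1$ is handled analogously, using condition~(4) of the regular stratum definition to prevent resonances in the derivative term. I inductively build $g_n = p_n p_{n-1}\cdots p_1 \in P^1$ with $p_k \in P^k$, so that $g_n \cdot A = A_n + Y_n$ with $A_n \in \ft\cap\fP^{-r}$ and $Y_n \in \fP^{-r+n+1}$.

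For the inductive step, Proposition~\ref{cores}(3) and~(5) yield a decomposition
\begin{equation*}
Y_n \equiv t_n + [X_{n+1}, \bnu] \pmod{\fP^{-r+n+2}},
\end{equation*}
where $t_n \in \ft \cap \fP^{-r+n+1}$ records the $\pit$-image of $Y_n$ and $X_{n+1} \in W_{n+1} \subset \fP^{n+1}$ is obtained by inverting the isomorphism $\ad(\bnu): W_{n+1} \to W_{-r+n+1}$. Setting $p_{n+1} := \exp(-X_{n+1}) \in P^{n+1}$, a direct expansion gives
\begin{equation*}
p_{n+1} \cdot (A_n + Y_n) \equiv (A_n + Y_n) - [X_{n+1}, A_n] + \tau_\nu(X_{n+1}) \equiv A_n + t_n \pmod{\fP^{-r+n+2}},
\end{equation*}
using that $[X_{n+1}, A_n - \bnu] \in \fP^{-r+n+2}$, that $[X_{n+1}, \bnu] \equiv Y_n - t_n$ modulo $\fP^{-r+n+2}$, and that $\tau_\nu(X_{n+1}) \in \fP^{n+1} \subset \fP^{-r+n+2}$ when $r \ge 1$. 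Define $A_{n+1} := A_n + t_n$ and let $Y_{n+1}$ be the remainder.

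The sequence $(g_n)$ converges in $P^1$ for the $\fP$-adic topology since $p_n \in P^n$; call the limit $p$, and set $A_\nu := \lim_n A_n \in \ft\cap\fP^{-r}$. Then $p \cdot [\n]_\phi = A_\nu \nu$. Since $A_\nu \equiv \bnu \pmod{\fP^{-r+1}}$ and $y_\b = z^r\bnu^e + \fP^1$ is semisimple (condition~(3) of the regular stratum definition), the leading term of $A_\nu$ is regular in $\ft$, hence so is $A_\nu$ itself.

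For the orbit claim, fix $q\in\fP^1$; then $A_\nu + q \in \bnu + \fP^{-r+1}$, so $d + (A_\nu + q)\nu$ still contains $(P^\phi, r, \b^\phi)$. Applying the first part produces $p'\in P^1$ and a regular $A'_\nu \in \ft \cap \fP^{-r}$ with $p'\cdot(A_\nu + q)\nu = A'_\nu\nu$, so $(A_\nu + q)\nu$ lies in the $P^1$-orbit of $A'_\nu \nu$. The remaining step is to show $A'_\nu = A_\nu$, which I expect to be the main obstacle. My approach is to prove directly that two regular elements of $\ft\cap\fP^{-r}$ lying in the same $P^1$-orbit must coincide: via a graded analysis paralleling the iteration, the injectivity of $\ad(\bnu)$ on $W_\bullet$ from Proposition~\ref{cores}(5) together with the compatibility of $\pit$ with the filtration forces the toral parts to agree level by level, ruling out any non-toral discrepancy.
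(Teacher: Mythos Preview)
This theorem is quoted from \cite[Theorem 4.13]{BrSa} and is not proved in the present paper, so there is no proof here to compare against. Your successive-approximation argument for the first assertion is the standard formal reduction and is essentially correct; note only that one may take $\bnu\in\ft\cap\fP^{-r}$ by \cite[Remark~3.5]{BrSa}, which is what makes the base case $A_0=\bnu\in\ft$ legitimate.

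Your plan for the second assertion, however, has a genuine gap: the claim that two regular elements of $\ft\cap\fP^{-r}$ lying in the same $P^1$-gauge orbit must coincide is \emph{false}. Already for $e=1$ (so $\ft$ is the diagonal subalgebra and $P=\GL_n(\fo)$), take any nonzero $H\in\ft\cap\fP^1$ and set $p=\exp(H)\in P^1$; then $p\cdot A_\nu=A_\nu-\tau_\nu(H)$ lies in $\ft\cap\fP^{-r}$ and differs from $A_\nu$ by the nonzero element $-\tau_\nu(H)\in\ft\cap\fP^1$. Your graded analysis ignores the derivative contribution $\tau_\nu(X)$ at the filtration levels $\ge 1$ where it first enters, and can at best yield $A'_\nu\equiv A_\nu\pmod{\fP^1}$ --- precisely the content of \cite[Lemma~3.16]{BrSa} invoked in Proposition~\ref{uniqft}. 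That is not enough to place $A_\nu+q$ in the orbit of $A_\nu$.

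The repair is to fold a toral correction into each step of the iteration for the orbit claim. Starting from $A_\nu+q$ with $q\in\fP^1$, write the remainder $q_n\in\fP^{n+1}$ modulo $\fP^{n+2}$ as $t_n+w_n$ with $t_n\in\ft\cap\fP^{n+1}$ and $w_n\in W_{n+1}$. First kill $t_n$: since $n+1\ge 1$, the endomorphism $\d_e$ of Lemma~\ref{del} is invertible on $(\ft\cap\fP^{n+1})/(\ft\cap\fP^{n+2})$, so choose $Y\in\ft\cap\fP^{n+1}$ with $\d_e(Y)\equiv -t_n$. Because $[Y,A_\nu]=0$, Lemma~\ref{del} gives
\[
\exp(-Y)\cdot(A_\nu+q_n)\equiv A_\nu+q_n+\tau_\nu(Y)\equiv A_\nu+w_n+[Y,H_T]\pmod{\fP^{n+2}},
\]
and $[Y,H_T]$ lies in $W_{n+1}$ since $\langle y,[Y,H_T]\rangle_\nu=\langle[y,Y],H_T\rangle_\nu=0$ for all $y\in\ft$. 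Now kill $w_n+[Y,H_T]\in W_{n+1}$ via $\exp(-X)$ with $X\in W_{n+1+r}$, exactly as in your first-part argument. The infinite product converges in $P^1$ and carries $A_\nu+q$ to $A_\nu$.
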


 \begin{rmk}\label{intertwine} By \cite[Lemma 4.4]{BrSa}, the map $\fP^{-r}\to
   \fP^{-r}/\fP^{1}$ intertwines the gauge and adjoint actions of $P$.
   This implies that the functional induced on $\fP^r/\fP^{r+1}$ by
   $A_\nu$ coincides with $\b$.
\end{rmk}
We now  define the ``formal type'' of a connection.
 \begin{definition}
 Let $A \in \fP^\vee \backslash \{0\},$ and suppose that $\fP^{r+1}$
 is the smallest congruence ideal contained in $A^\perp$. Let $\b$ be
 the restriction of $A$ to $\fP^r/\fP^{r+1}$.  We say that 
 $A$ is a \emph{formal type} if it satisfies the following conditions:
 \begin{enumerate}
 \item the stratum $(P, r, \b)$ is regular and centralized by $T$, and
 \item any representative $A_\nu \in \fP^{-r}$ for $A$ lies in $\ft+ \fP^1$.
 \end{enumerate}
 The connection $(\hV, \n)$ has formal type $A$ if there is a
 trivialization $\phi : \hV \to F^n$ such that $(\hV, \n)$ contains
 the stratum $(P^\phi, r, \b^\phi)$, and $[\n]_\phi$ is formally gauge
 equivalent to an element of $(A_\nu + \fP^1) \nu$ by an element of
 $P^1$.  Finally, we say that two connections that contain regular
 strata are \emph{combinatorially equivalent} if the strata have the same
 parahoric subgroup and slope.
 \end{definition}
 By \cite[Corollary 4.16]{BrSa}, any two connections with formal type
 $A$ are formally isomorphic, and the formal type is independent of
 $\nu$.  The converse is false, since any conjugate of $A$ by the
 relative Weyl group of $T$ is also a formal type for $(\hV, \n)$.
 However, fixing a stratum uniquely determines the formal type.
For the rest of the section, we assume (without loss of generality) that $P \subset \GL_n(\fo)$.
\begin{proposition}\label{uniqft}
  If $\n$ contains a regular stratum $(P^\phi, r, \b^\phi)$, there
  exists a unique formal type $A$ such that $A |_{\fP^r} = \b$ and
  $[\n]_\phi$ is gauge equivalent to an element of $(A_\nu +
  \fP^{1})\nu$ by an element of $\GL_n(\fo)$.
\end{proposition}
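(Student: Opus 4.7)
The plan is to establish existence directly from Theorem~\ref{exft} and Remark~\ref{intertwine}, and to argue uniqueness by reducing to the principle, recorded after \cite[Corollary~4.16]{BrSa}, that fixing a stratum uniquely determines the formal type of a connection.

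For existence, I would apply Theorem~\ref{exft} to produce $p \in P^1$ and a regular $A_\nu \in \ft \cap \fP^{-r}$ satisfying $p \cdot [\n]_\phi = A_\nu\nu$. Since $P^1 \subset P \subset \GL_n(\fo)$, this $p$ certifies the required $\GL_n(\fo)$-gauge equivalence, and by Remark~\ref{intertwine} the associated formal type $A$ restricts to $\b$ on $\fP^r/\fP^{r+1}$.

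For uniqueness, suppose $A$ and $A'$ are two formal types satisfying the stated conditions, with witnesses $g, g' \in \GL_n(\fo)$ giving $g \cdot [\n]_\phi = (A_\nu + X)\nu$ and $g' \cdot [\n]_\phi = (A'_\nu + X')\nu$ for some $X, X' \in \fP^1$. The key step is to invoke the orbit assertion of Theorem~\ref{exft}: since the $P^1$-orbit of $A_\nu\nu$ contains $(A_\nu + \fP^1)\nu$, the connections $d + A_\nu\nu$ and $d + (A_\nu + X)\nu$ on $F^n$ are isomorphic. Combining with the witness $g$ yields $(\hV, \n) \cong (F^n, d + A_\nu\nu)$, and symmetrically $(\hV, \n) \cong (F^n, d + A'_\nu\nu)$, so $d + A_\nu\nu$ and $d + A'_\nu\nu$ are formally isomorphic. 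Each of these connections manifestly has formal type $A$, resp.\ $A'$, with the standard trivialization as witness, and each contains the stratum $(P, r, \b)$ in that trivialization because $A_\nu \equiv \bnu \equiv A'_\nu \pmod{\fP^{-r+1}}$ by the hypothesis $A|_{\fP^r} = A'|_{\fP^r} = \b$. The uniqueness of the formal type given a fixed stratum then forces $A = A'$.

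The main obstacle is the passage from the $\GL_n(\fo)$-gauge equivalence in the hypothesis back to a formal isomorphism between the two $\ft$-valued connections $d + A_\nu\nu$ and $d + A'_\nu\nu$ on $F^n$. A direct analysis of the composite $g(g')^{-1} \in \GL_n(\fo)$ is awkward because $\GL_n(\fo)$ is strictly larger than $P^1$ whenever $P$ is a non-maximal parahoric; the orbit refinement in Theorem~\ref{exft} sidesteps this by absorbing the $\fP^1$-correction terms $X$ and $X'$ into $P^1$-gauge transformations before any comparison is attempted.
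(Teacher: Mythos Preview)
Your existence argument is correct and matches the paper's.

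For uniqueness, there is a genuine gap. The ``principle'' you invoke---that fixing a stratum uniquely determines the formal type---is not an independent input: that sentence in the paper appears without citation immediately before Proposition~\ref{uniqft} and is precisely what the proposition is meant to justify. Citing it here is circular.

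More concretely, even granting the principle, your reduction does not close. The principle says: if a single connection has two formal types each witnessed via the \emph{same} stratum $(P^\phi,r,\b^\phi)$ (hence via a $P^1$-gauge equivalence in the \emph{same} trivialization $\phi$), then they agree. You show that $(\hV,\n)$ has formal type $A$ via $p\in P^1$ in trivialization $\phi$, but your witness for $A'$ is $q'g'\in\GL_n(\fo)$, which changes the trivialization to $\phi'=q'g'\phi$ and hence the stratum to $(P^{\phi'},r,\b^{\phi'})$. Unless $q'g'$ normalizes $P$ and fixes $\b$---which you have not argued and which is not automatic when $P$ is a proper parahoric---this is a different stratum, and the principle does not apply. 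Absorbing the $\fP^1$-tails $X,X'$ via Theorem~\ref{exft} does nothing to fix this: the composite gauge element $h=q'g'p^{-1}$ relating $A_\nu\nu$ to $A'_\nu\nu$ still lies only in $\GL_n(\fo)$, not in $P^1$. Equivalently, your final step implicitly assumes that two isomorphic connections, each containing $(P,r,\b)$ in its own standard trivialization, must have the same formal type---but that is exactly Proposition~\ref{uniqft}.

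The paper's proof avoids all of this by invoking \cite[Lemma~3.16]{BrSa} directly: if $[\n]_\phi$ is gauge equivalent to some $A'_\nu\in\ft\cap\fP^{-r}$ with $A'_\nu|_{\fP^r}=\b$, then $A'_\nu+\fP^1=A_\nu+\fP^1$. That lemma is the technical content you are missing.
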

\begin{proof}
By Theorem~\ref{exft} and Remark~\ref{intertwine}, $(\hV, \n)$ has a
formal type $A$ such that $A|_{\fP^r}=\b$, and there exists $p \in P^1$ 
such that  $p \cdot [\n]_\phi = A_\nu \nu$, where $A_\nu\in \ft\cap\fP^{-r}$.
This formal type is unique, since \cite[Lemma 3.16]{BrSa} implies that
if $[\n]_\phi$ is gauge equivalent to $A'_\nu \in \ft\cap\fP^{-r}$ and
$A'_\nu |_{\fP^r} = \b$, then $A'_\nu + \fP^1 = A_\nu + \fP^1$.
\end{proof}

Finally, throughout the paper we will need to consider a slight variation
on the formal type.  Choose a uniformizer $\oe$ for $E$ such that
$\oe^e = z$; under a suitable embedding
$E\hookrightarrow\gl_{n/e}(\C)$, 
such a way that
\begin{equation}\label{varpip}
\oe= 
\begin{pmatrix}
0 & 1 & \cdots & 0\\
\vdots & \ddots & \ddots & \vdots \\
0 & \ddots & 0 & 1  \\
z  & 0 & \cdots& 0 
\end{pmatrix}.
\end{equation}
We choose a basis for each $\chi_j\hV$ (and hence for $V$) such that
$\ot$ is block diagonal with these blocks.  Using this basis, we
define $H_T=(H_E,\dots,H_E)\in\fP$ as the block diagonal matrix with
blocks given by the diagonal matrix $H_E=\diag(\frac{e-1}{2e},
\frac{e-3}{2 e}, \ldots, \frac{1-e}{2 e})$. Let $H_T' \in \fP^\vee$ to
be the corresponding functional $H_T'(X) = \Tr (H_T X)|_{z = 0}$.
\begin{definition}
Let $(\hV, \n)$ have formal type $A$.  We define the \emph{normalized formal type}
of $(\hV, \n)$ to be  $\tA = A + H_T'$.
\end{definition}
Note that if $e=1$, then $\tA=A$.
\begin{proposition}\label{gA}
Suppose that $(\hV, \n)$ contains a regular stratum and has normalized
formal type $\tA$.  If $\tA_\nu \in \fP^{-r}$
is a representative for $\tA$, then there exists $\hp \in P^1$
such that $\hp \cdot [\n]_\phi = \tA_\nu \nu$.  
\end{proposition}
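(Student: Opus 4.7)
The plan is to combine Theorem~\ref{exft} with an explicit $P^1$-gauge transformation that absorbs the shift by $H_T$. By Theorem~\ref{exft}, there exist $p\in P^1$ and a regular $A_\nu\in \ft\cap \fP^{-r}$ representing $A$ with $p\cdot [\n]_\phi=A_\nu\nu$. It will then suffice to construct $q \in P^1$ with $q\cdot A_\nu\nu = \tA_\nu\nu$ for any prescribed representative $\tA_\nu \in A_\nu + H_T + \fP^1$. The case $e=1$ is immediate because $H_T=0$ and $\tA=A$, so we may assume $e\ge 2$; the condition $\gcd(r,e)=1$ then forces $r\ge 1$.

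The key preliminary is to show that $H_T \in W_0 = \ker(\bar{\pi}_{\ft,0})$, in the notation of Proposition~\ref{cores}(\ref{cores5}). By property~(\ref{cores4}), $\pit(H_T)\in\ft$ is characterized by $\langle y,\pit(H_T)\rangle_\nu = \langle y, H_T\rangle_\nu$ for all $y\in\ft$. Decomposing $y$ in the basis $\{\oe^k\chi_j\}$, and using property~(\ref{cores2}) to know a priori that $\pit(H_T)\in\fo_E^{n/e}$, this reduces to checking $\Tr(\oe^k H_E)|_{z=0} = 0$ for all $k\ge 0$. When $k \equiv 0 \pmod e$ one has $\oe^k = z^{k/e} I$, so the trace equals $z^{k/e}\Tr(H_E)$, which vanishes at $z=0$ because $\Tr(H_E)=0$ by the symmetric choice of diagonal entries; when $k\not\equiv 0\pmod e$, the matrix $\oe^k$ is strictly off-diagonal on each block, so $\Tr(\oe^k H_E) = 0$ identically. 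Hence $\pit(H_T)=0$, and in particular $H_T\in W_0$.

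By Proposition~\ref{cores}(\ref{cores5}), $\ad(A_\nu):W_r \to W_0$ is an isomorphism, so there exists $X_0 \in \fP^r$ with $[X_0,A_\nu] \equiv H_T\pmod{\fP^1}$. Since $r\ge 1$, $X_0\in \fP^1$, and $q := \exp(X_0) \in P^1$. Expanding the gauge action,
\[
q \cdot A_\nu\nu = \Ad(q)(A_\nu)\,\nu - (d_z q)q^{-1}.
\]
The iterated brackets $[X_0,[X_0,\dots[X_0,A_\nu]\dots]]$ of length $\ge 2$ land in $[\fP^r,\fP^0]\subset \fP^r\subset \fP^1$ once the first bracket lies in $H_T+\fP^1\subset\fP^0$, so $\Ad(q)(A_\nu)\equiv A_\nu + H_T\pmod{\fP^1}$. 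The correction $(d_z q)q^{-1}/\nu$ has leading term $\tau_\nu(X_0) = z\partial_z X_0\in\fP^r$ (since $\tau_\nu$ preserves the $\fP$-filtration), with higher order contributions in $\fP^{2r}\subset\fP^r$, and is therefore in $\fP^1$. Combining these yields $q\cdot A_\nu\nu = (A_\nu + H_T + Y)\nu$ for some $Y\in\fP^1$.

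To reach the prescribed $\tA_\nu = A_\nu + H_T + Y_0$, apply the iterative correction argument of the furthermore of Theorem~\ref{exft}, but starting from $B:=A_\nu+H_T+Y$ rather than $A_\nu$. Since $B$ has the same regular leading term as $A_\nu$ in $\fP^{-r}/\fP^{-r+1}$, and the proof of that furthermore only exploits the isomorphisms $\ad(A_\nu):W_\ell\to W_{\ell-r}$ for all $\ell$, it produces $q'\in P^1$ with $q'\cdot B\nu = (B + (Y_0-Y))\nu = \tA_\nu\nu$. Setting $\hp := q'qp$ gives the desired element. The main obstacle is the pair of cohomological checks---the vanishing of $\pit(H_T)$ and the adaptation of Theorem~\ref{exft}'s furthermore to $B$---both of which reduce to the filtration calculus in Proposition~\ref{cores}.
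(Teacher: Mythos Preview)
Your overall strategy matches the paper's: reduce to $A_\nu\nu$ via Theorem~\ref{exft}, verify $H_T\in W_0$, produce $X_0\in\fP^r$ with $[X_0,A_\nu]\equiv H_T\pmod{\fP^1}$, and use $\exp(X_0)\in P^1$ to bridge between $A_\nu$ and $\tA_\nu$. There is one genuine gap and one minor overstatement.

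The gap is in your final step. You assert that the ``furthermore'' of Theorem~\ref{exft} adapts to $B=A_\nu+H_T+Y$, on the grounds that its proof ``only exploits the isomorphisms $\ad(A_\nu):W_\ell\to W_{\ell-r}$.'' But that clause is stated for $A_\nu\in\ft\cap\fP^{-r}$, and $B\notin\ft$; moreover the $W_\ell$-isomorphisms alone cannot produce the $\ft$-components of an arbitrary target in $\fP^1$, so the proof in \cite{BrSa} presumably uses more structure than you allow. The paper sidesteps this entirely by reversing the direction of the bridging step: since $[X_0,\tA_\nu]=[X_0,A_\nu]+[X_0,H_T+Y_0]\in H_T+\fP^1$ (the second bracket lies in $\fP^r\subset\fP^1$), the very estimates you already carried out give $\exp(-X_0)\cdot\tA_\nu\nu\in(A_\nu+\fP^1)\nu$. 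Now the furthermore applies \emph{as stated} to $A_\nu$: there is $p'\in P^1$ with $p'\exp(-X_0)\cdot\tA_\nu\nu=A_\nu\nu$, and one takes $\hp=\exp(X_0)(p')^{-1}p$. Your argument is easily repaired this way, with no appeal to the internal structure of the cited proof.

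The overstatement: pairing $H_T$ only against $\oe^k\chi_j$ with $k\ge 0$ determines just the $\oe^0$-coefficient of $\pit(H_T)$, so your computation yields $\pit(H_T)\in\ft^1$, not $\pit(H_T)=0$. This weaker conclusion is exactly $H_T\in W_0$, which is all you actually use.
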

\begin{proof}
  If $r=0$ (so $e=1$), this is just Theorem~\ref{exft}, so assume that
  $r\ge 1$.  Note that $H_T' \in (\tfl)^\perp$.  Therefore, part 4 of
  Proposition \ref{cores} imply that $\pi_\ft (H_T) \in \fP^1$.  Part
  3 of the same proposition shows that there exists $X \in \fP^r$ such
  that $\ad (X) \tA_\nu \in H_T \frac{1}{\Res(\nu)} + \fP^1$, so there
  exists $p \in P^1$ such that $p \cdot [\n]_\phi = A_\nu \nu$ by
  Theorem~\ref{exft}.  Moreover, $\exp(-X) \cdot \tA_\nu \nu \in
  (A_\nu +\fP^1)\nu$, so there exists $p' \in P^1$ such that $p'
  \exp(-X) \cdot \tA_\nu \nu= A_\nu \nu$.  It follows that $(\exp(X)
  (p')^{-1} p) \cdot [\n]_\phi = \tA_\nu \nu$.
\end{proof}

\begin{proposition}\label{stab}
Any normalized formal type $\tA$ is stabilized by $T \cap P^1$.
\end{proposition}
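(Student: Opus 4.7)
The plan is to translate the statement into a congruence for a representative of $\tA$ in $\fP^{-r}/\fP^1$ via the pairing $\langle\cdot,\cdot\rangle_\nu$, and then verify it by a short computation that uses two facts: $T$ centralizes $\ft$, and $T\cap P^1$ is pro-unipotent with Lie algebra contained in $\ft\cap\fP^1$.

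First, I would fix a convenient representative $\tA_\nu\in\fP^{-r}$ of the form $\tA_\nu = A_\nu + H_T$. By the definition of formal type, any representative of $A$ lies in $\ft + \fP^1$, so one may choose $A_\nu \in \ft\cap\fP^{-r}$. The matrix $H_T\in\fP^0$ represents the functional $H_T'$ under $\langle\cdot,\cdot\rangle_\nu$: with $\ord(\nu)=-1$,
\begin{equation*}
\langle H_T, X\rangle_\nu = \Res\Tr(H_T X\,\nu) = \Tr(H_T X)|_{z=0} = H_T'(X).
\end{equation*}
Since $\fP^\perp = \fP^1$ for this pairing, the coadjoint action of $t\in T\cap P^1$ fixes $\tA$ if and only if $\Ad(t)\tA_\nu \equiv \tA_\nu \pmod{\fP^1}$.

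Next, given $t\in T\cap P^1$, the pro-unipotence of $P^1$ lets me write $t = \exp(Y)$ with $Y\in\ft\cap\fP^1$. The $\ft$-part is immediate: $\Ad(t)A_\nu = A_\nu$ because $T$ centralizes $\ft$. For the $H_T$-part, expand
\begin{equation*}
\Ad(t)H_T - H_T = \sum_{k\ge 1}\tfrac{1}{k!}\,\ad(Y)^k H_T,
\end{equation*}
and invoke the multiplicativity $\fP^a\fP^b\subset\fP^{a+b}$: since $Y\in\fP^1$ and $H_T\in\fP^0$, each summand $\ad(Y)^k H_T$ lies in $\fP^k\subset\fP^1$. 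Therefore $\Ad(t)\tA_\nu - \tA_\nu\in\fP^1$, which is the desired congruence.

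The argument is essentially a bookkeeping check once the pairing conventions are matched, so there is no real obstacle. The only small points requiring care are (i) confirming that $H_T$ lies in $\fP^0$ rather than in some strictly smaller piece of the filtration, which is clear from the block-diagonal formula $H_T = (H_E,\dots,H_E)$, and (ii) the remark that one may choose $A_\nu$ itself in $\ft$ (not merely in $\ft+\fP^1$) for the purposes of this congruence, which is permitted because altering $A_\nu$ by an element of $\fP^1$ leaves both sides of the final congruence unchanged.
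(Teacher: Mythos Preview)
Your proposal is correct and follows essentially the same approach as the paper. Both arguments decompose $\tA_\nu = A_\nu + H_T$ and handle the two pieces separately: $A_\nu\in\ft$ is fixed by $T$ (hence by $T\cap P^1$), while $H_T$ is moved by $P^1$ only within $H_T+\fP^1$ because $\ad(\fP^1)H_T\subset\fP^1$; your expansion via $t=\exp(Y)$ with $Y\in\ft\cap\fP^1$ is just an explicit unpacking of this second step, which the paper records in one line.
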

\begin{proof}
Since $\ad (\fP^1) H_T \in \fP^1$, $P^1$ stabilizes $H_T + \fP^1$.
Moreover, the corresponding formal type $A$ is stabilized
by $T$.
\end{proof}
\section{Moduli spaces of connections}\label{three}
In this section, we will describe the moduli space of connections on
$\proj(\C)$ with compatible framings and fixed combinatorics at each
singular point.

First, we recall the construction of the moduli space of framed
connections on $\proj(\C)$ with fixed formal type at each singular
point~\cite{BrSa}.  Throughout, $I$ will be a finite indexing set and $\bfx =
\{x_i\}_{i \in I}$ a collection of distinct points in $\proj$.  We
denote the completion of the function field of $\proj$ at $x_i$ by
$F_i$ and the corresponding power series ring by $\fo_i$.
 
 Let $V$ be a trivial rank $n$ vector bundle on $\proj$, i.e., we have
 fixed a trivialization $V \cong \struct_{\proj}^n$.  Accordingly, we may
 identify the space of all global trivializations of $V$ with
 $\GL_n(\C)$. Note that a global trivialization determines a local
 trivialization of $V_{i} := V \otimes_{\struct_{\proj}} F_{i}$, so
 there is a natural inclusion $\GL_n(\C) \hookrightarrow \GL_n(F_i)$;
 moreover, the global sections of $V$ generate a distinguished lattice
 $L_i \subset V_i$.  Suppose that $\n$ is a connection on $V$ with the
 property that the induced connection on $V_i$ has formal type $A_i$.
 We will assume without loss of generality that the associated torus $T_i$ is contained in
 $\GL_n(\fo_i)$.  Therefore, by \cite[Proposition 4.14]{BrSa}, $A_i$ determines
 a unique stratum $(P_i, r, \b_i)$ in $\GL_n(F_i)$. We also set $\o_i=\o_{T_i}$.
 
 Throughout this section, $U_i$ will denote the unipotent subgroup
 $P^1_i \cap \GL_n(\C)$ with Lie algebra $\fu_i=\fP_i^1 \cap
 \gl_n(\C)$.  For simplicity, we write $\fg_i$ for the parahoric
 subalgebra $\gl_n(\fo_i)$; its radical is $\fg_i^1=t\fg_i$.  Note
 that $U_i \cong P^1_i / (1 + \fg_i^1)$ and $\fu_i \cong \fP^1_i /
 \fg^1$.
 
 \begin{definition}\label{framingdef}
A \emph{compatible framing} for $\nabla$ at $x_i$ is a global trivialization  $g \in \GL_n (\C)$
with the property that $\n$ contains the $\GL(V_i)$-stratum $(P^g_i, r, \beta^g_i )$
defined above.
We say that $\n$ is \emph{framed} at $x_i$ if there exists such a $g$.
\end{definition}
 
We now define the moduli space of connections with fixed formal type
and a specified framing at each singular point.  Set $\bA_i = A_i
|_{\fP^{1}}$.

 \begin{definition}\label{tmdef}
 Let $\tM(\bfA)$ be the the moduli space of isomorphism classes
 of triples $(V, \n, \bfg)$, where
 \begin{itemize}
 \item $\n$ is a meromorphic connection on the trivial bundle $V$ 
 with singularities at $\{x_i\}_{i \in I}$;
 \item $\bfg = \{U_i g_i\}_{i \in  I},$ with $g_i$ a compatible framing for $\n$ at $x_i$;
 \item the formal type $A'_i$ of $\n$ at $x_i$ satisfies $\bA'_i = \bA_i$.
 \end{itemize}
 
 \end{definition}
 The moduli space $\tM(\bfA)$ is built out of ``extended'' coadjoint
 orbits $\tM(A_i)$ determined locally by each formal type $A_i$.  In
 the following, let $A$ be a formal type.  We define $\pfP : \fg^\vee \to \fP^\vee$ and $\pfPo :
 \fg^\vee \to (\fP^1)^\vee$ to be the restriction maps and $\Oo$ to
 be the orbit of $\bA$ under the coadjoint action of $P^1$.
 Also, when $r = 0$ (so $e= 1$), we take $(\tfl)' \subset
 \gl_n(\C)^\vee$ to be the set of functionals of the form $\phi(X) =
 \Tr (DX)$, where $D \in \tfl$ is a diagonal matrix with distinct
 eigenvalues modulo $\Z$.
  \begin{definition}\label{tM}
  When $r > 0$, 
  define the extended orbit
  $\tM (A) \subset (U \backslash
  \GL_n(\C)) \times \mathfrak{g}^\vee$ to be
\begin{equation*}
\tM(A) = \{ (U g, \alpha) \mid \pi_{\mathfrak{P}^1} (\Ad^*(g) (\alpha)) \in \Oo) \}.
\end{equation*}
When $ r = 0$, we define $\tM(A)$ by
\begin{equation*}
\tM(A)  = \{ (g, \a) \in \GL_n(\C) \times \gl_n(\C)^\vee \mid 
\Ad^*(g) \a \in (\tfl)' \} .
\end{equation*}

There is a natural action $\rho$ of $\GL_n(\C)$ on $\tM(A)$ given by
$\rho(h) (U g, \a) = (U g h^{-1}, \Ad^*(h) \a)$.
  \end{definition}
  Note that when $r = 0$, $\tM(A)$ is independent of $A$.
  \begin{proposition}\label{mm}{\cite[Proposition 5.10]{BrSa}}
    The space $\tM(A)$ is a symplectic manifold, and $\rho$ is a
    Hamiltonian action.  The moment map for $\rho$ is given by
    $\mu_\rho (U g, \a) = \res(\a)$, where
    $\res(\a)=\a|_{\gl_n(\C)}$.
  \end{proposition}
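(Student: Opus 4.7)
The strategy is to exhibit $\tM(A)$ as a Marsden--Weinstein symplectic quotient of an explicit symplectic manifold, so that both the symplectic structure and the moment map for $\rho$ are read off from standard reduction theory. The cases $r>0$ and $r=0$ are handled separately.

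\emph{The case $r>0$.} Form the symplectic manifold
\[
  N \;=\; T^*\GL_n(\C) \times \Oo^{-},
\]
in which $T^*\GL_n(\C) \cong \GL_n(\C)\times \gl_n(\C)^\vee$ carries the canonical cotangent symplectic form (trivialized by left-invariant one-forms) and $\Oo$ carries its Kirillov--Kostant--Souriau form, with the superscript ``$-$'' denoting the opposite sign. The subgroup $U = P^1\cap \GL_n(\C)$ acts diagonally on $N$: on the first factor by lifting left translation to the cotangent bundle, and on the second via the restriction of the coadjoint $P^1$-action. Each piece of this action is Hamiltonian with an explicit moment map, so the joint action is Hamiltonian with moment map $\mu_U : N \to \fu^\vee$. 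The key algebraic point is that the splitting $\fg = \gl_n(\C)\oplus \fg^1$ with $\fg^1=z\fg$ and the dual splitting of $(\fP^1)^\vee$ allow the nonlinear constraint $\pfPo(\Ad^*(g)\a) \in \Oo$ to be encoded as the moment-map equation $\mu_U = 0$ on a suitably enlarged ambient space, after which one identifies $\mu_U^{-1}(0)/U$ with $\tM(A)$ as defined in Definition~\ref{tM}.

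\emph{Smoothness of the reduction.} Left translation by $U$ on $\GL_n(\C)$ is free, so $U$ acts freely on $N$. Regularity of the stratum $(P,r,\b)$, via the kernel description in Proposition~\ref{cores}(\ref{cores5}), shows that $\mu_U$ is a submersion along the zero level set, and the Marsden--Weinstein theorem yields a smooth symplectic manifold $\tM(A) = \mu_U^{-1}(0)/U$.

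\emph{The residual action and moment map.} Right translation by $\GL_n(\C)$ on itself commutes with left translation by $U$; its lift to $T^*\GL_n(\C)$ is Hamiltonian with moment map $(g,\a)\mapsto \a$ in the left-invariant trivialization. This action descends through the reduction to the action $\rho$, and the descended moment map on $\tM(A)$ is $(Ug,\a)\mapsto \a|_{\gl_n(\C)}=\res(\a)$, matching the statement. For $r=0$, $U$ is trivial and $\Oo$ degenerates to the open subset $(\tfl)'\subset\gl_n(\C)^\vee$, so $\tM(A)$ is simply the preimage of $(\tfl)'$ under $(g,\a)\mapsto \Ad^*(g)\a$, an open subset of $T^*\GL_n(\C)$; the symplectic form and the moment-map formula follow directly.

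\emph{Main obstacle.} The delicate point, and the content of the construction, is matching the nonlinear condition $\pfPo(\Ad^*(g)\a)\in \Oo$ with a moment-map equation for a $U$-action: the $\fP^1$-constraint mixes a $\fu$-component and a $\fg^1$-component of $\Ad^*(g)\a$, and the enlarged symplectic manifold together with the $U$-action must be chosen so that $\mu_U=0$ captures precisely the full $\fP^1$-constraint, not merely its $\fu$-part. Once this identification is set up cleanly, everything else is standard symplectic reduction together with the regularity input from Proposition~\ref{cores}.
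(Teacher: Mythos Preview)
The paper does not prove this proposition here; it is cited from \cite[Proposition~5.10]{BrSa}. Your symplectic-reduction approach is correct and is the symplectic-leaf specialization of the Poisson reduction carried out later in this paper (Proposition~\ref{plemma}) for the larger space $\tM(P,r)$: there one reduces $(\fP^1/\fP^{r+1})^\vee_{\reg}\times T^*\GL_n(\C)$ by $U$, and the symplectic leaves of the quotient are identified with the various $\tM(A)$; restricting the first factor to a single coadjoint orbit $\Oo$ recovers your space $N$, and the gluing $(Y,(g,X))\mapsto\psi_{Y,X}$ in that proof is precisely the identification you describe under ``Main obstacle.'' The moment-map computation for the residual $\GL_n(\C)$-action likewise matches the argument in Lemma~\ref{faction}. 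One small remark: once you have noted that $U$ acts freely on $N$ via the $T^*\GL_n(\C)$ factor, the moment map $\mu_U$ is automatically a submersion, so the appeal to Proposition~\ref{cores}\eqref{cores5} at that step is unnecessary.
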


\begin{rmk} If $\a_\nu$ is any representative of $\a$, then $\res(\a)=\Res(\a_\nu\nu)$.
\end{rmk}

 \begin{theorem}\label{modthm}{\cite[Theorem 5.4]{BrSa}}
 The moduli space $\tM(\bfA)$ is  the symplectic reduction of $\prod_{i\in I} \tM(A_i)$
by the diagonal action of $\GL_n(\C)$:
\begin{equation*}
\tM (\mathbf{A}) \cong (\prod_{i \in I} \tM(A_i)) \sslash_0 \GL_n(\C).
\end{equation*}
Moreover, $\tM(\bfA)$ is a symplectic manifold.
\end{theorem}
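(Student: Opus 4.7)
The plan is to exhibit a natural bijection between $\tM(\bfA)$ and the symplectic quotient $\mu^{-1}(0)/\GL_n(\C)$, where $\mu$ is the moment map for the diagonal action on $\prod_{i \in I} \tM(A_i)$, and then invoke Marsden--Weinstein to conclude symplecticity. The heart of the matter is a partial-fractions / residue argument on $\proj$: given a tuple $((U_i g_i, \alpha_i))_{i \in I}$ in $\prod_i \tM(A_i)$, the moment map condition is $\sum_{i\in I}\res(\alpha_i)=0$ in $\gl_n(\C)$. Since $H^0(\proj,\Omega^1\otimes\gl_n)=0$, a $\gl_n$-valued meromorphic one-form on $\proj$ is uniquely determined by its polar parts, and such a form exists iff its residues sum to zero. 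Thus, the datum $(\alpha_i)_{i\in I}$ with the vanishing condition produces a unique global one-form $\alpha$ on $\proj$, hence a connection $\nabla = d + \alpha$ on the trivial bundle $V$.

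Next I would verify that the triple $(V,\nabla,\{U_i g_i\})$ lies in $\tM(\bfA)$. The local condition defining $\tM(A_i)$ is that $\pi_{\fP_i^1}(\Ad^*(g_i)\alpha_i)\in\Oo_i$, i.e., that after the constant gauge change by $g_i$, the polar part of $\alpha$ near $x_i$ lies in the coadjoint $P_i^1$-orbit of $\bA_i$. By Theorem~\ref{exft} and Proposition~\ref{uniqft}, this is exactly the condition that $\nabla$ contains the stratum $(P_i^{g_i},r_i,\b_i^{g_i})$ and has formal type $A'_i$ at $x_i$ with $\bA'_i=\bA_i$, so $g_i$ is a compatible framing and the formal type restricts correctly. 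Conversely, given a point of $\tM(\bfA)$, the local connection form at $x_i$, viewed through the gauge $g_i$, yields a functional $\alpha_i\in\fg_i^\vee$ (via the pairing $\langle\cdot,\cdot\rangle_{\nu_i}$ for a local form $\nu_i$ of order $-1$), and these $\alpha_i$ evidently satisfy $\sum_i\res(\alpha_i)=0$ because $\alpha$ is globally defined on $\proj$.

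I would then observe that the diagonal $\GL_n(\C)$-action, $h\cdot((U_ig_i,\alpha_i))=(U_ig_ih^{-1},\Ad^*(h)\alpha_i)$, corresponds precisely to changing the global trivialization of $V$ by $h$: the new connection form is $h\cdot\alpha$ in the gauge sense, and the framings transform compatibly. Since two triples in Definition~\ref{tmdef} are declared isomorphic exactly when they differ by such a change of trivialization, the set-theoretic bijection $\mu^{-1}(0)/\GL_n(\C) \xrightarrow{\sim} \tM(\bfA)$ follows. Finally, to apply Marsden--Weinstein and conclude that the quotient is a smooth symplectic manifold, one must check that $\GL_n(\C)$ acts freely (or at least locally freely with a proper quotient) on $\mu^{-1}(0)$; freeness follows because a stabilizer $h\in\GL_n(\C)$ would have to fix each compatible framing in $U_i\backslash \GL_n(\C)$ and centralize the connection form, forcing $h$ to lie in the global automorphism group of the framed connection, which is trivial by regularity (the only global endomorphisms commuting with a connection whose local formal types are regular are scalars, and the framing coset further rigidifies these).

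The main obstacle is the verification in the second paragraph: one must carefully match the local analytic definition of ``contains a stratum'' and ``has formal type $A_i$'' with the purely algebraic condition $\pi_{\fP_i^1}(\Ad^*(g_i)\alpha_i)\in\Oo_i$ that defines $\tM(A_i)$. Proposition~\ref{uniqft} and Remark~\ref{intertwine} handle the key step that the gauge orbit and coadjoint orbit of a representative of the polar part agree modulo $\fP^1$, but one must be attentive to the distinction between gauge and adjoint actions of $P^1$, and to the intertwining under restriction that makes the moment-map identification work. Once this identification is in place, the symplectic quotient structure and Theorem~\ref{modspace} follow formally from Proposition~\ref{mm}.
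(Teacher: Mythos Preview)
The paper does not prove this theorem; it is quoted from the companion paper \cite[Theorem~5.4]{BrSa}, and the only thing recorded in the present paper is the formula for the moment map $\mu_{\GL_n}=\sum_{i\in I}\res_i$.  So there is no ``paper's own proof'' to compare against.

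That said, your sketch captures what is almost certainly the argument in \cite{BrSa}, and the steps are sound.  The residue/partial-fractions argument is correct: the vanishing of $\sum_i\res(\alpha_i)$ is exactly the obstruction to assembling the local principal parts into a global $\gl_n$-valued one-form on $\proj$, and the diagonal $\GL_n(\C)$-action is change of global trivialization.  The local matching you flag as the main obstacle is indeed the substantive point, and you have identified the right ingredients: the intertwining of gauge and adjoint actions of $P^1$ on $\fP^{-r}/\fP^1$ (Remark~\ref{intertwine}, \cite[Lemma~4.4]{BrSa}) is what makes the coadjoint-orbit condition defining $\tM(A_i)$ equivalent to the formal-type condition in Definition~\ref{tmdef}.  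Freeness of the action on $\mu^{-1}(0)$ is handled in \cite{BrSa} (cf.\ Lemma~\ref{faction} here, which cites \cite[Lemma~5.12]{BrSa}); your rigidity sketch is correct in spirit but would need the precise statement that the automorphism group of a framed connection with regular formal types is trivial.  One small correction to your closing line: in this paper Theorem~\ref{modspace} is deduced \emph{from} Theorem~\ref{modthm} (via Proposition~\ref{mma}), not the other way around.
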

Specifically, $\prod_{i \in I} \tM(A_i)$ is a symplectic manifold, and the diagonal
action of $\GL_n(\C)$ has moment map $\mu_{\GL_n} = \sum_{i \in I} \res_i$.  Thus,
\begin{equation*}
(\prod_{i \in I} \tM(A_i)) \sslash_0 \GL_n(\C) = \mu_{\GL_n}^{-1} (0) / \GL_n(\C).
\end{equation*}

In the remainder of this section, we describe a larger moduli space in which we fix only the
combinatorics of $(V_i, \n_i)$ at $x_i$, and not the formal type.
Again, it will be constructed as a reduction of the product of local
pieces.  

First, we discuss these local pieces.  If $P$ is a uniform parahoric
subgroup $P$ with period $e$, $r\ge 0$ is an integer such that
$(r,e)=1$, and $T$ is a maximal torus such that $T(\fo)\subset P$, we
let $(\fP/ \fP^{r+1})^\vee_{\reg}$ be the set of $\g\in (\fP/
\fP^{r+1})^\vee$ such that $(P, r, \g|_{\fP^r/\fP^{r+1}})$ is a
regular stratum centralized by $T$.  We also let $\A(P, r)$ be the
subset of $(\fP/\fP^{r+1})^\vee_{\reg}$ consisting of normalized
formal types.  Note that $\A(P, r)$ is the subset of $(\fP/
\fP^{r+1})^\vee_{\reg}$ consisting of $X + H_T$, where $X$ is
stabilized by the coadjoint action of $T(\fo)$.  We also define
$(\fP^1 / \fP^{r+1})^\vee_{\reg}$ to be  the projection of $(\fP /
\fP^{r+1})^\vee_\reg$ onto $(\fP^1/ \fP^{r+1})^\vee$.

\begin{definition}\label{tmp} 
If $r = 0$, define $\tM (P, r) = \tM(A)$ for any $A$.
If $r > 0$, define $\tM(P, r) \subset (U \backslash \GL_n(\C)) \times \gl_n(\fo)^\vee$ by
\begin{equation*}
  \tM(P, r)=\{ (U g, \a) | \pi_{\fP}(\Ad^*(g) \a) \in (\fP/ \fP^{r+1})^\vee_{\reg}\}.
\end{equation*}

\end{definition}
The space $\tM(P, r)$ is a manifold; the argument is similar to the
proof that $\tM(A)$ is smooth given in \cite{BrSa}.  

Set $\fV = \pi_{\fP}^{-1} (\fP/ \fP^{r+1})^\vee_{\reg} \subset
(\fg / \fP^{r+1})^\vee$.  
In the notation of Proposition~\ref{cores} part~\eqref{cores5}, let 
$\tW_r \subset \fP^r$ 
be the subset of elements that map to $W_r \pmod{\fP^{r+1}}$.
We define $Z = (\fg / \tW_r)^\vee$. 
Note that in the case $r=0$, $\fV=(\fg/\fg^1)^\vee_{\reg}\cong(\tfl)'=\A(\GL_n(\fo),0)$
and $Z \cong \tfl$.

\begin{lemma}\label{altdescr}
  There is an isomorphism $ \fV \times_{U} \GL_n(\C) \cong\tM(P, r)$.
  Furthermore, there are open dense inclusions $(\fP/
  \fP^{r+1})^\vee_{\reg} \hookrightarrow (\fP / \tW_r)^\vee$, $\fV
  \hookrightarrow Z$, and $(\fP^1/ \fP^{r+1})^\vee_{\reg}
  \hookrightarrow (\fP^1 / \tW_r)^\vee$ in the case $r \ge 1$.
\end{lemma}
\begin{proof}
The first isomorphism is given by the map $(v, g) \mapsto (Ug, \Ad^*(g^{-1})
v)$.   Next, we observe that $(\fP/ \fP^{r+1})^\vee_{\reg}$ may be identified
with an open subset of $(\ft \cap \fP^{-r} + \fP^{-r+1}) / \fP^1$.
By part \eqref{cores4} of Proposition~\ref{cores}, 
$\tW_r = (\ft \cap \fP^{-r} + \fP^{-r+1})^\perp$.  It follows that
$(\ft \cap \fP^{-r} + \fP^{-r+1}) / \fP^1 \cong (\fP / \tW_r)^\vee$.  The other inclusions
are proved similarly.
\end{proof}

\begin{proposition}\label{A}
There is a smooth map $\mA : \tM(P, r) \to \A(P, r)$ which assigns to
$(U g, \a)$ the unique normalized formal type in the $P^1$-orbit of
$\pi_\fP(\Ad (g) (\a))$.  The fiber $\mA^{-1} (\tA)$ is isomorphic to
$\tM(A)$.
\end{proposition}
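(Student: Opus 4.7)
The plan is to construct $\mA$ via the local isomorphism $\tM(P, r) \cong \fV \times_U \GL_n(\C)$ from~\eqref{altdescr}: lift $\mA$ to $\fV$ as the composition
\begin{equation*}
\fV \xrightarrow{\pi_\fP} (\fP/\fP^{r+1})^\vee_{\reg} \to \A(P, r),
\end{equation*}
where the second arrow sends each regular element to the unique normalized formal type in its $P^1$-orbit, and then descend through the $U$-quotient.

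For well-definedness of the normalization, given $\g \in (\fP/\fP^{r+1})^\vee_{\reg}$, a coadjoint analog of Proposition~\ref{gA} produces a normalized formal type $\tA \in \A(P, r)$ in the $P^1$-orbit of $\g$: any representative $\g_\nu \in \fP^{-r}$ of $\g$ can be brought into $\ft + \fP^1$ by a $P^1$-coadjoint transformation (intertwining with gauge on this level by Remark~\ref{intertwine}), which gives a formal type $A$, and then $\tA = A + H_T'$. Uniqueness within the $P^1$-orbit follows from \cite[Lemma 3.16]{BrSa}, exactly as in Proposition~\ref{uniqft}. Independence from the choice of $g$ modulo $U$ is immediate since $U \subset P^1$.

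The nontrivial step in smoothness is that the normalization $(\fP/\fP^{r+1})^\vee_{\reg} \to \A(P, r)$ is smooth. I would prove this by exhibiting $\A(P, r)$ as a transverse slice to the $P^1$-coadjoint action: parts~(3) and~(5) of Proposition~\ref{cores}, together with the $\ft$-graded decomposition of $\fP/\fP^{r+1}$, identify the tangent to the orbit $P^1 \cdot \tA$ at $\tA$ with the image of $\ad(\tA_\nu)$, which is complementary to $T_{\tA} \A(P, r)$ (the $T(\fo)$-fixed directions). Since Proposition~\ref{stab} combined with regularity forces the stabilizer of $\tA$ in $P^1$ to be $T \cap P^1$, the multiplication map $P^1 \times_{T \cap P^1} \A(P, r) \to (\fP/\fP^{r+1})^\vee_{\reg}$ is a diffeomorphism, and projection onto the second factor is smooth.

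For the fiber, I claim $\mA^{-1}(\tA) = \tM(A)$ as subsets of $(U \backslash \GL_n(\C)) \times \fg^\vee$. One inclusion is easy: if $\pi_\fP(\Ad^*(g)\a) \in P^1 \cdot \tA$, restriction to $\fP^1$ yields $\pi_{\fP^1}(\Ad^*(g)\a) \in P^1 \cdot (\tA|_{\fP^1}) = P^1 \cdot \bA = \Oo$, where the equality $\tA|_{\fP^1} = \bA$ reduces to the direct computation $H_T'|_{\fP^1} = 0$ (which holds because $H_T$ is diagonal while elements of $\fP^1 = \ot \fP$ have vanishing diagonal at $z = 0$ in the basis fixed after~\eqref{varpip}). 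The reverse inclusion is the main obstacle. After a $P^1$-translation one reduces to $\pi_{\fP^1}(\Ad^*(g)\a) = \bA$, and must then show that the resulting regular extension $\pi_\fP(\Ad^*(g)\a)$ of $\bA$ lies in the $\mathrm{Stab}_{P^1}(\bA)$-orbit of $\tA$ within the affine space $\tA + (\fP/\fP^1)^\vee$. This is an infinitesimal transitivity statement, controlled by Proposition~\ref{cores}(5) at the bottom grade together with the uniqueness of normalized formal types established above.
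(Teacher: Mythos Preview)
Your construction of $\mA$ is essentially the paper's: both factor through the associated-bundle description $(\fP/\fP^{r+1})^\vee_{\reg}\cong\A(P,r)\times_{T\cap P^1}P^1/P^{r+1}$, and your ``transverse slice'' formulation is just another name for this. One minor slip: your multiplication map should have source $P^1/P^{r+1}\times_{T\cap P^1}\A(P,r)$ rather than $P^1\times_{T\cap P^1}\A(P,r)$, since $P^{r+1}$ acts trivially on $(\fP/\fP^{r+1})^\vee$.

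For the fiber identification, you and the paper swap roles on the two inclusions. The paper treats $\tM(A)\hookrightarrow\mA^{-1}(\tA)$ as immediate (``by comparing Definitions~\ref{tM} and~\ref{tmp}'') and writes out the one-line argument for $\mA^{-1}(\tA)\subset\tM(A)$ (restrict to $\fP^1$, using $H_T'|_{\fP^1}=0$). Your ``easy'' direction is precisely this latter computation; your ``main obstacle'' is exactly what the paper waves past.

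There is a real gap in your sketch for that reverse inclusion, and in fact the inclusion fails as a set containment. The transitivity you invoke does not hold: the $\mathrm{Stab}_{P^1}(\bA)$-orbit of $\tA$ inside the affine space of regular extensions of $\bA$ sweeps out only the $W_0$-direction (this is what Proposition~\ref{cores}\eqref{cores5} buys you), not the $\tfl$-direction. Concretely, for $c\in\tfl$ the functional $\tA+c'$ (where $c'(X)=\Tr(cX)|_{z=0}$) restricts to the same $\bA$ on $\fP^1$ but lies in a different $P^1$-orbit whenever $c\ne 0$, since the $P^1$-action cannot alter the $\tfl$-component of the residue. Consequently $\tM(A)$, which by Definition~\ref{tM} depends only on $\bA$, is the disjoint union $\bigcup_{c}\mA^{-1}(\tA+c')$ and strictly contains any single fiber $\mA^{-1}(\tA)$. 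Your caution here is therefore well placed---the paper's ``by comparing definitions'' is no more complete---but the resolution is not the transitivity argument you outline.
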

\begin{proof}
Using the description of $\tM(P, r)$ in Lemma~\ref{altdescr}, we will
construct a smooth map $\z : \fV \times \GL_n(\C) \to \A(P, r)$
and then show that it $U$-equivariant.

First, we show that 
\begin{equation}\label{regprod}
(\fP/\fP^{r+1})^\vee_{\reg}\cong\A(P, r)
\times_{T\cap P^1} P^1/ P^{r+1}, 
\end{equation}
where $T\cap P^1$ acts on the right
factor by left translation, and on the left factor by the coadjoint
action.  There is a natural map $\A(P, r) \times_{T \cap P^1} P^1/P^{r+1}
\to (\fP/\fP^{r+1})^\vee_{\reg}$ given by $(Y, \bp) \mapsto
\Ad^*(p^{-1}) Y$.  The inverse map takes a regular functional $\g$ to
the class of $(A,\bp)$, where $\tA$ is the unique normalized formal type in the
$P^1$-orbit of $\tA$ and $\tA=\Ad^*(p)\g$. 

We now define $\z' : (\fP/\fP^{r+1})^\vee_{\reg} \to \A(P, r)$ as the
projection onto the left factor of $\A(P, r) \times_{T\cap P^1} P^1 /
P^{r+1}$; this makes sense since the coadjoint action of $T\cap P^1$
on $\A(P, r)$ is trivial by Proposition~\ref{stab}.  In
particular, it is clear that $\z' (\Ad^* (u) X) =X$ for any $u
\in U$.  It follows that the map $\z (v,g) = \z' (\pi_{\fP} (v))$
is $U$-equivariant, where $U$ acts trivially on $\A(P, r)$.
We define $\mA$ to be the map induced by $\z$ on $ \fV\times_{U}
\GL_n(\C)$.

Finally, we see that $\tM(A)$ embeds into $\mA^{-1} (\tA)$ by comparing
Definitions~\ref{tM} and \ref{tmp}.  Moreover, if $(U g, \a) \in
\mA^{-1} (\tA)$, then $\pi_{\fP^1}(\Ad^*(g) (\a)) \in \Oo$.
Therefore, $\tM(A) \cong \mA^{-1} (\tA)$.

\end{proof}

Before proceeding, we need to recall some facts about Poisson
reduction.  Recall that if a Lie group $G$ acts on a Poisson manifold
$M$ via a canonical Poisson action, then there is a corresponding
moment map $\mu_M : M \to \fg^\vee$.  The following result appears in
\cite{MR}.


\begin{proposition}[{\cite[Examples 3.B, 3.F]{MR}}]\label{plem2}
  Let $M$ be a Poisson manifold, and suppose that $G$ is a linear
  algebraic group with a free canonical Poisson action on $M$.  If $0$
  is a regular value for $\mu_M$, then the Poisson structure on $M$
  induces a Poisson structure on $M \sslash_0
  G\overset{\mathrm{def}}{=}\mu^{-1} (0) / G$ called the Poisson
  reduction of $M$.  If the symplectic leaves of $M$ intersect the
  $G$-orbits cleanly (in the terminology of \cite[II.25, p.
  180]{GuSt}), then the symplectic leaves of $M \sslash_0 G$ are the
  connected components of the symplectic reductions of those
  symplectic leaves of $M$ that intersect $\mu^{-1}(0)$.
\end{proposition}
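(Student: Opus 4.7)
The plan is to establish the smoothness of the quotient first, then define the Poisson structure by averaging/extension, and finally identify the symplectic leaves via standard Marsden--Weinstein reduction applied leaf-by-leaf.

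First, I would verify that $M \sslash_0 G$ is a smooth manifold. Since $0$ is a regular value of $\mu_M$, the zero fiber $\mu^{-1}(0)\subset M$ is a closed embedded submanifold of codimension $\dim G$. The free action of $G$ restricts to $\mu^{-1}(0)$ (since $\mu_M$ is $G$-equivariant with respect to the coadjoint action, which fixes $0$); combined with the freeness hypothesis and standard slice arguments for algebraic group actions, this yields a smooth manifold structure on $\mu^{-1}(0)/G$ together with a submersion $\mu^{-1}(0)\to M\sslash_0 G$.

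Next, I would construct the reduced bracket. Given $f,\bar{g}\in C^\infty(M\sslash_0 G)$, I pull them back to $G$-invariant smooth functions on $\mu^{-1}(0)$. The key observation is that any $G$-invariant function on $\mu^{-1}(0)$ extends, at least locally in a tubular neighborhood, to a $G$-invariant function on $M$; call such extensions $\tilde{f},\tilde{g}$. I then define $\{f,\bar{g}\}_{\mathrm{red}}$ to be the function on $M\sslash_0 G$ induced by $\{\tilde{f},\tilde{g}\}_M|_{\mu^{-1}(0)}$. Two points require checking: $G$-invariance of $\{\tilde{f},\tilde{g}\}_M$, which follows from the fact that the $G$-action is canonical and hence its infinitesimal generators act as Poisson derivations; and independence of the extension, which is the crux. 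If $\tilde{f}'$ is a second $G$-invariant extension, then $\tilde{f}-\tilde{f}'$ vanishes on $\mu^{-1}(0)$, so in local coordinates it can be written as $\sum_a h^a\,\mu_a$ for smooth $h^a$ (using that $\mu_M$ is a submersion near $\mu^{-1}(0)$). Then
\begin{equation*}
\{\tilde{f}-\tilde{f}',\tilde{g}\}_M\big|_{\mu^{-1}(0)} = \sum_a h^a\{\mu_a,\tilde{g}\}_M\big|_{\mu^{-1}(0)},
\end{equation*}
and $\{\mu_a,\tilde{g}\}_M = -\xi_a\cdot \tilde{g}$, where $\xi_a$ is the fundamental vector field of the basis element dual to $\mu_a$; this vanishes because $\tilde{g}$ is $G$-invariant. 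Jacobi and Leibniz for $\{\cdot,\cdot\}_{\mathrm{red}}$ then follow immediately from the corresponding identities on $M$.

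Finally, I would describe the symplectic leaves. For a symplectic leaf $N\subset M$ that meets $\mu^{-1}(0)$, the clean intersection hypothesis guarantees that $N\cap\mu^{-1}(0)$ is a smooth submanifold of $N$ of the expected dimension, and that the restriction $\mu_M|_N$ is a moment map for the (not necessarily free) $G$-action on $N$ induced by the Poisson action. Standard Marsden--Weinstein reduction then produces a symplectic manifold $N\sslash_0 G$, which maps injectively into $M\sslash_0 G$; the image consists of a union of leaves of the reduced Poisson structure. The hardest step here is checking that this image coincides with the symplectic leaves of the reduction: one direction follows because the reduction map $\mu^{-1}(0)\cap N\to N\sslash_0 G$ factors through $M\sslash_0 G$ and produces a symplectic immersion, while the other direction requires verifying that the Hamiltonian vector fields of functions on $M\sslash_0 G$, lifted to $\mu^{-1}(0)$, are tangent to $N\cap\mu^{-1}(0)$ modulo the $G$-orbit directions — this is where the clean-intersection hypothesis is essential, as it ensures the tangent space decomposition needed for the argument. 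Passing to connected components then gives precisely the symplectic leaves of $M\sslash_0 G$.
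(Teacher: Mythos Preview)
The paper does not supply its own proof of this proposition: it is quoted verbatim as a result from Marsden--Ratiu \cite{MR} (Examples~3.B and~3.F there), with no argument given. So there is nothing in the paper to compare your proposal against.

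That said, your sketch is essentially the standard Marsden--Ratiu argument and is broadly sound. One small point worth tightening: you insist on extending $G$-invariant functions on $\mu^{-1}(0)$ to $G$-invariant functions on $M$. For a non-compact linear algebraic group this is not automatic and requires the slice theorem (which in turn needs properness, not just freeness). In fact the argument does not require $G$-invariant extensions: it suffices to take \emph{any} smooth extensions $\tilde f,\tilde g$, since your computation $\{\mu_a,\tilde g\}=-\xi_a\cdot\tilde g$ vanishes on $\mu^{-1}(0)$ as soon as $\tilde g|_{\mu^{-1}(0)}$ is $G$-invariant, which it is by construction. This simplifies the extension step and avoids the properness issue there (though you still need properness for the quotient to be a manifold).
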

\begin{lemma} \label{regpoisson} When $r \ge 1$, $(\fP^1/
  \fP^{r+1})^\vee_{\reg}$ has a natural Poisson structure.
\end{lemma}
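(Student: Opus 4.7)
The plan is to exhibit $(\fP^1/\fP^{r+1})^\vee_{\reg}$ as a Zariski-open subvariety of a linear Poisson space. First I would observe that $\fP^1/\fP^{r+1}$ is a finite-dimensional nilpotent Lie algebra over $\C$: finite-dimensionality is clear since each graded piece $\fP^s/\fP^{s+1}$ is finite-dimensional, and nilpotency follows from the multiplicative property $\fP^a\fP^b \subset \fP^{a+b}$, which yields $[\fP^a,\fP^b]\subset\fP^{a+b}$ and makes the lower central series terminate at $\fP^{r+1}$. Consequently, its linear dual $(\fP^1/\fP^{r+1})^\vee$ carries the canonical Kirillov-Kostant-Souriau Lie-Poisson structure, given on smooth functions by
\begin{equation*}
\{f, g\}(\xi) = \langle \xi, [df(\xi), dg(\xi)]\rangle,
\end{equation*}
where $df(\xi), dg(\xi)$ are identified with elements of $\fP^1/\fP^{r+1}$. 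The symplectic leaves of this Poisson manifold are the coadjoint orbits of the unipotent group $P^1/P^{r+1}$; in particular, the orbit $\Oo$ of $\bA$ appearing in Definition~\ref{tM} is such a leaf.

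Next I would argue that $(\fP^1/\fP^{r+1})^\vee_{\reg}$ is Zariski open in $(\fP^1/\fP^{r+1})^\vee$. By definition it is the image under the restriction map $\pi : (\fP/\fP^{r+1})^\vee \to (\fP^1/\fP^{r+1})^\vee$ of the regular locus $(\fP/\fP^{r+1})^\vee_{\reg}$. The crucial observation is that regularity of a functional $\gamma$ depends only on its restriction to $\fP^r/\fP^{r+1}$; since $r \ge 1$ forces $\fP^r \subset \fP^1$, this restriction factors through $\pi$. Thus $(\fP^1/\fP^{r+1})^\vee_{\reg}$ coincides with the preimage in $(\fP^1/\fP^{r+1})^\vee$ of the ``regular stratum'' locus in $(\fP^r/\fP^{r+1})^\vee$, and openness reduces to verifying that the latter locus is Zariski open.

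This in turn follows by checking that the defining conditions of a regular stratum centralized by $T$ are all open in $\b$: the containment $T(\fo) \subset P$ is independent of $\b$; semisimplicity of $y_\b = z^r\bnu^e + \fP^1 \in \fP/\fP^1$ is a standard open condition on a polynomially varying element of a finite-dimensional algebra; and the kernel condition $\ker(\pd_{\b, s}) = \ft\cap\fP^s + \fP^{s+1}$ (equivalently, the isomorphism condition on $\ad(\bnu): W_\ell \to W_{\ell-r}$ from Proposition~\ref{cores}(5)) is a maximality-of-rank condition on a linear map varying polynomially in $\b$, hence open by upper-semicontinuity of the rank function. I expect this rank computation to be the most delicate point, though it is standard once Proposition~\ref{cores} is in hand. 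Once openness is established, $(\fP^1/\fP^{r+1})^\vee_{\reg}$ inherits a Poisson structure by restriction of the Poisson bivector of $(\fP^1/\fP^{r+1})^\vee$, which completes the proof.
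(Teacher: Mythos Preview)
Your argument has a genuine gap: the set $(\fP^1/\fP^{r+1})^\vee_{\reg}$ is \emph{not} Zariski open in the full dual $(\fP^1/\fP^{r+1})^\vee$. The problem lies in your analysis of the kernel condition. Requiring $\ker(\pd_{\b,s}) = \ft\cap\fP^s + \fP^{s+1}$ is not merely a maximality-of-rank condition; it also demands that the fixed subspace $\ft\cap\fP^s + \fP^{s+1}$ be \emph{contained} in the kernel, i.e., that $[\ft,\bnu]\subset\fP^{-r+1}$. Since $\ft$ is its own centralizer, this forces the representative $\bnu$ to lie in $\ft + \fP^{-r+1}$ (cf.\ \cite[Remark~3.5]{BrSa}), which is a \emph{closed} linear condition on $\b$. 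Consequently the regular locus in $(\fP^r/\fP^{r+1})^\vee$ sits inside the proper linear subspace corresponding to $(\ft\cap\fP^{-r}+\fP^{-r+1})/\fP^{-r+1}$, and its preimage in $(\fP^1/\fP^{r+1})^\vee$ cannot be open.

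The paper's proof addresses exactly this point. It identifies the closure $Z=(\ft\cap\fP^{-r}+\fP^{-r+1})/\fP\subset\fP^{-r}/\fP$ and observes that, under duality, $Z$ is the vanishing locus of $W_r=\ker(\bar\pi_{\ft,r})\subset\fP^r/\fP^{r+1}$. The crucial extra step is to check that $W_r$ generates a Poisson ideal: since $[\fP^1,W_r]\subset\fP^{r+1}$, the linear functions in $W_r$ Poisson-commute with everything, so $Z$ is a Poisson subvariety of $(\fP^1/\fP^{r+1})^\vee$. The regular locus is then open in $Z$ (here your rank and semisimplicity arguments are the relevant ones) and inherits the Poisson structure from $Z$ rather than from the ambient dual. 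Your approach can be repaired by inserting this Poisson-ideal argument, but as written it does not go through.
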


\begin{proof} By Lemma~\ref{altdescr}, $(\fP^1/ \fP^{r+1})^\vee_{\reg}$
is naturally isomorphic to an open dense subset of $(\fP^1 / \tW_r)^\vee$.
  It suffices to show that $(\fP^1 / \tW_r)^\vee$ is a Poisson manifold.  Since
  $[\fP^1, \tW_r] \subset \fP^{r+1}$, and the Poisson bracket restricted
  to linear functions on $(\fP^1 / \fP^{r+1})^\vee$ is just the usual
  Lie bracket on $\fP^1 / \fP^{r+1}$, this implies that the ideal
  generated by $W_r \subset \fP^1 / \fP^{r+1}$ is a Poisson ideal.  
  It follows that 
  $ (\fP^1 / \tW_r)^\vee \cong W_r^\perp \subset (\fP^1 /
  \fP^{r+1})^\vee$ is a Poisson space.  Therefore $(\fP^1 /
  \fP^{r+1})^\vee_{\reg}$ is Poisson.
\end{proof}

\begin{proposition}\label{plemma}
The manifold $\tM(P, r)$ has a Poisson structure.
When $r \ge 1$, 
the manifold $\tM(P, r)$ is isomorphic to a Poisson reduction:
\begin{equation*}
\tM(P, r) \cong ((\fP^1/ \fP^{r+1})^\vee_{reg} \times T^* \GL_n(\C))  \sslash_{0} U.
\end{equation*}
The symplectic leaves of $\tM(P,r)$ are the 
 fibers of the map $\mA$.
\end{proposition}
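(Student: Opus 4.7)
My plan is to realize $\tM(P,r)$ as a Poisson reduction of $M := (\fP^1/\fP^{r+1})^\vee_{\reg}\times T^*\GL_n(\C)$ by a natural $U$-action and then invoke Proposition~\ref{plem2}. The case $r=0$ will be handled separately: $U$ is trivial there, so $\tM(P,0)=\tM(A)$ is already symplectic by Proposition~\ref{mm} and the symplectic-leaf assertion becomes vacuous. In what follows I assume $r\ge 1$.

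The first step will be an algebraic identification. Observing that $\fg=\gl_n(\C)+\fP^1$ with $\gl_n(\C)\cap\fP^1=\fu$, the map $v\mapsto (v|_{\gl_n(\C)},v|_{\fP^1})$ should be an isomorphism
\begin{equation*}
(\fg/\fP^{r+1})^\vee \;\cong\; \gl_n(\C)^\vee\times_{\fu^\vee}(\fP^1/\fP^{r+1})^\vee,
\end{equation*}
under which $\fV$ corresponds to $\gl_n(\C)^\vee\times_{\fu^\vee}(\fP^1/\fP^{r+1})^\vee_{\reg}$. Combining with \eqref{altdescr} rewrites $\tM(P,r)$ as the quotient
\begin{equation*}
\tM(P,r)\;\cong\;\bigl(\gl_n(\C)^\vee\times_{\fu^\vee}(\fP^1/\fP^{r+1})^\vee_{\reg}\times\GL_n(\C)\bigr)/U,
\end{equation*}
with $U$ acting diagonally: coadjointly on the two functional factors and by left translation on $\GL_n(\C)$.

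Next I will equip $M$ with its product Poisson structure (Lemma~\ref{regpoisson} on the first factor; the canonical symplectic structure on the second) and let $U$ act coadjointly on the first factor and via the cotangent lift of left multiplication on the second. Both actions are canonical Poisson with moment maps $\mu_1(\gamma)=\gamma|_\fu$ and $\mu_2(g,\alpha)=(\Ad^*(g)\alpha)|_\fu$. With the appropriate sign convention, the combined moment map is $\mu(\gamma,g,\alpha)=\gamma|_\fu-(\Ad^*(g)\alpha)|_\fu$, so $\mu^{-1}(0)$ encodes exactly the fiber-product compatibility on $\fu^\vee$. The $U$-action is free because left translation on $\GL_n(\C)$ is free, which forces $\mu$ to be a submersion; the map $(\gamma,g,\alpha)\mapsto((\Ad^*(g)\alpha,\gamma),g)$ is then a $U$-equivariant diffeomorphism from $\mu^{-1}(0)$ onto the total space in the previous display, and descends to the desired isomorphism $M\sslash_0 U\cong\tM(P,r)$. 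This transports a Poisson structure to $\tM(P,r)$.

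Finally, the symplectic leaves will be identified via Proposition~\ref{plem2}. The symplectic leaves of $M$ are $\Oo\times T^*\GL_n(\C)$ with $\Oo$ a $P^1$-coadjoint orbit in $(\fP^1/\fP^{r+1})^\vee_{\reg}$, and, by Proposition~\ref{stab} together with the proof of Proposition~\ref{A}, these orbits are in canonical bijection with $\A(P,r)$ via $\tA\mapsto\Oo_{\tA}$. Tracing through the identification above, $(\Oo_{\tA}\times T^*\GL_n(\C))\sslash_0 U$ reproduces Definition~\ref{tM} of $\tM(A)$ verbatim, which equals $\mA^{-1}(\tA)$ by Proposition~\ref{A}. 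I expect the main technical obstacle to be the clean-intersection hypothesis of Proposition~\ref{plem2}, but it should follow immediately: the $U$-action on the first factor of $M$ is the restriction of the ambient $P^1$-action, so every $U$-orbit lies entirely within a single $P^1$-coadjoint orbit, which makes its intersection with the corresponding symplectic leaf trivially clean.
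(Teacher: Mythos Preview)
Your argument is essentially the same as the paper's. The paper also equips $M=(\fP^1/\fP^{r+1})^\vee_{\reg}\times T^*\GL_n(\C)$ with the product Poisson structure, lets $U$ act by coadjoint on the first factor and left translation on $\GL_n(\C)$, writes down the same moment map, and then builds the identification $\mu^{-1}(0)/U\cong\tM(P,r)$ by the gluing map $(Y,(g,X))\mapsto(\psi_{Y,X},g)$, where $\psi_{Y,X}\in(\fg/\fP^{r+1})^\vee$ is the functional obtained by pasting $\Ad^*(g)X$ and $Y$ along $\fu^\vee$. Your fiber-product description $(\fg/\fP^{r+1})^\vee\cong\gl_n(\C)^\vee\times_{\fu^\vee}(\fP^1/\fP^{r+1})^\vee$ is exactly this gluing, just packaged differently. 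The clean-intersection argument is also the same in spirit: the paper simply declares it obvious, while you observe that $U$-orbits sit inside $P^1$-coadjoint orbits, which is the underlying reason.

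One small point you omit: Proposition~\ref{plem2} only gives that the symplectic leaves of $\tM(P,r)$ are the \emph{connected components} of the reductions $(\Oo\times T^*\GL_n(\C))\sslash_0 U\cong\tM(A)$. To conclude that the leaves are precisely the fibers $\mA^{-1}(\tA)$ you need $\tM(A)$ connected; the paper invokes this explicitly. Also, for $r=0$ the leaf assertion is not literally vacuous (the map $\mA$ is not constant), though the paper handles that case with equal brevity.
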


\begin{proof}
  In the case $r = 0$, $\tM(P, r) \cong \tM(A)$ by the remark
  following Definition~\ref{tmp}.  Therefore, $\tM(P, r)$ is in fact
  symplectic.
  
  We now suppose that $r \ge 1$.  The space $(\fP^1/
  \fP^{r+1})^\vee_{reg} \times T^* \GL_n(\C))$ is a Poisson manifold
  using the Poisson structure of Lemma~\ref{regpoisson} on the first
  factor and the usual symplectic structure of a cotangent bundle on
  the second.
  

  The group $U$ acts on $(\fP^1 / \fP^{r+1})^\vee_\reg$ and
  $T^* \GL_n(\C)$
  via the coadjoint action and the free action induced by
  left multiplication on $\GL_n(\C)$ respectively; these actions are
  canonical Poisson.

The moment map of the diagonal action is given by
\begin{equation}\label{tmu}
\tmu (Y, (g, X)) = -\Ad^*(g) (X)|_{\fu} + Y |_{\fu}.
\end{equation}
It is clear that $\tmu$ is a submersion.


If $\tmu (Y, (g, X)) = 0$, then $\Ad^* (g) (X) |_{\fu} = Y |_{\fu}$.
Therefore, we may glue $\Ad^* (g) (X)$ and $Y$ together to obtain a functional
$\psi_{Y, X} \in (\fg / \fP^{r+1})^\vee$; note that $\psi_{Y,
  X}\in\fV$ if and only if $Y\in(\fP^1/ \fP^{r+1})^\vee_{reg}$.
Using the description of $\tM(P, r)$ in
Lemma~\ref{altdescr},  we define a map $p : \tmu^{-1} (0) \to \tM(P, r)$ by
\begin{equation*}
  (Y, (g, X)) \mapsto (\psi_{Y, X}, g) \in (\fg / \fP^{r+1})^\vee \times_{U} \GL_n(\C).
\end{equation*}
The map is surjective, and the fibers of $p$ are $U$-orbits.

The symplectic leaves of $(\fP^1 / \fP^{r+1})^\vee_{reg} \times T^*
\GL_n(\C)$ are given by $\O \times T^*\GL_n(\C)$, where $\O$ is any
coadjoint orbit in $(\fP^1/ \fP^{r+1})^\vee_{reg}$.  It is obvious
that the $U$-orbits intersect the leaves cleanly.  It now follows from
Definition~\ref{tM} and the fact that the $\tM(A)$ are connected that
the symplectic leaves of $\tM(P, r)$ are given by $\tM(A)$ for $A$ a
formal type corresponding to $P$ and $r$.

\end{proof}

\begin{lemma}\label{faction} The $\GL_n(\C)$-action on $\tM(P, r)$ defined by $h
  (U g, \a) = (U g h^{-1}, \Ad^*(h) (\a))$ is free canonical Poisson
  with submersive moment map $\mu(Ug,\a)=\res(\a)$.
\end{lemma}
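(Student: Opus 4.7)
The plan is to deduce all four claims from the Poisson reduction
presentation of $\tM(P,r)$ in Proposition~\ref{plemma}; the case
$r=0$ follows at once from Proposition~\ref{mm} since $\tM(P,0) =
\tM(A)$, so I focus on $r\ge 1$.  I will consider the
$\GL_n(\C)$-action on $(\fP^1/\fP^{r+1})^\vee_{\reg} \times
T^*\GL_n(\C)$ that is trivial on the first factor and is the cotangent
lift of right translation on the second.  In the left trivialization
$T^*\GL_n(\C)\cong \GL_n(\C)\times\gl_n(\C)^\vee$, this action takes
the form $h\cdot (g,X) = (gh^{-1},\Ad^*(h)X)$ and is Hamiltonian with
moment map $(g,X)\mapsto X$.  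Since left and right translations
commute, this action commutes with the $U$-action used in the
reduction, so it descends to a canonical Poisson action on $\tM(P,r)$.

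To identify the descended action, I would unwind the quotient map $p$
in Proposition~\ref{plemma} together with the
isomorphism~\eqref{altdescr}; a direct computation shows the descended
action is $h(Ug,\a) = (Ugh^{-1},\Ad^*(h)\a)$.  For the moment map, the
gluing convention gives $\psi_{Y,X}|_{\gl_n(\C)} = \Ad^*(g)X$, and
since $\a = \Ad^*(g^{-1})\psi_{Y,X}$, one computes $\res(\a) =
\a|_{\gl_n(\C)} = X$; thus the descended moment map is $\mu(Ug,\a) =
\res(\a)$, with equivariance immediate since $\Ad(h)$ preserves
$\gl_n(\C)\subset\gl_n(\fo)$.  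For submersivity, the defining
condition of $\tM(P,r)$ is open in $\a\in\gl_n(\fo)^\vee$, so tangent
vectors to $\tM(P,r)$ include arbitrary directions in
$\gl_n(\fo)^\vee$; since $d\res$ is the surjective restriction
$\gl_n(\fo)^\vee\to\gl_n(\C)^\vee$, it follows that $d\mu$ is
surjective.

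For freeness, suppose $h\in\GL_n(\C)$ stabilizes $(Ug,\a)$.  Then
$Ugh^{-1}=Ug$ forces $h = g^{-1}u^{-1}g$ for some $u\in U$, and
$\Ad^*(h)\a=\a$ becomes the condition that $u$ stabilizes
$\Ad^*(g)\a$ under the coadjoint action.  Since
$\pi_\fP(\Ad^*(g)\a) \in (\fP/\fP^{r+1})^\vee_{\reg}$ is regular,
part~\eqref{cores5} of Proposition~\ref{cores} implies that the
$P^1$-stabilizer of this element is contained in $(T\cap
P^1)P^{r+1}$.  Intersecting with $U=P^1\cap \GL_n(\C)$ forces $u=I$:
indeed, $T\cap \GL_n(\C)$ consists of block-scalar matrices (by the
structure of $T$ associated to the totally ramified extension $E/F$),
which lie in $P^1$ only as the identity, and $P^{r+1}\cap \GL_n(\C)$
is trivial by direct inspection of the constant-matrix content of
$\fP^{r+1}$.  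The main obstacle I anticipate is the moment map
identification: the chain of isomorphisms from $T^*\GL_n(\C)$ through
$\tmu^{-1}(0)/U$ and $\fV\times_U \GL_n(\C)$ to $\tM(P,r)$ passes
through multiple trivialization conventions, and one must carefully
track the body-frame coordinate $X\in\gl_n(\C)^\vee$ to recognize it
as $\res(\a)$.
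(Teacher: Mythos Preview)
Your approach to showing the action is canonical Poisson with moment map $\mu(Ug,\a)=\res(\a)$ is essentially the paper's: lift to the commuting $\GL_n(\C)$-action on $(\fP^1/\fP^{r+1})^\vee_{\reg}\times T^*\GL_n(\C)$, identify the upstairs moment map as $(Y,(g,X))\mapsto X$, and push down through the reduction of Proposition~\ref{plemma}, recognizing $X$ as $\res(\a)$ via $\psi_{Y,X}$.

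However, your direct argument for freeness has a genuine gap. The claim that $P^{r+1}\cap\GL_n(\C)$ is trivial is false whenever $r+1<e$: for instance, if $P=I$ is the standard Iwahori in $\GL_3$ and $r=1$, then $\fI^2\cap\gl_3(\C)=\C E_{13}$, so $I^2\cap\GL_3(\C)$ contains $1+cE_{13}$ for every $c$. Moreover, the $P^1$-stabilizer of a regular element of $(\fP/\fP^{r+1})^\vee$ is not $(T\cap P^1)P^{r+1}$ but a $P^1$-conjugate of it (this is visible from the identification $(\fP/\fP^{r+1})^\vee_{\reg}\cong\A(P,r)\times_{T\cap P^1}P^1/P^{r+1}$ in the proof of Proposition~\ref{A}); so the intersection you must control is $U\cap p^{-1}(T\cap P^1)p\cdot P^{r+1}$, and your two observations do not resolve it. Part~\eqref{cores5} of Proposition~\ref{cores} concerns only the leading term $\bnu$ and the off-diagonal pieces $W_\ell$, so invoking it alone for the full stabilizer is also a stretch. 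The paper sidesteps all of this by citing \cite[Lemma~5.12]{BrSa}, which proves freeness leafwise on each $\tM(A)$.

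Your submersivity argument needs similar care: the defining condition of $\tM(P,r)$ is \emph{not} open in $\a\in\gl_n(\fo)^\vee$, since it forces $\Ad^*(g)\a$ to vanish on $\fP^{r+1}$. Under the finite-dimensional description \eqref{altdescr} the restriction $(\fg/\fP^{r+1})^\vee\to\gl_n(\C)^\vee$ fails to be surjective exactly when $\gl_n(\C)\cap\fP^{r+1}\ne 0$, i.e., in the same cases as above. Submersivity does hold---most transparently because in the model of Proposition~\ref{plemma} the moment map is literally projection onto the $X$-coordinate of $T^*\GL_n(\C)$---but your argument as written does not establish it. The paper instead cites \cite[Lemma~5.11]{BrSa}, which gives the stronger statement that $\mu$ is submersive even when restricted to each symplectic leaf.
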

\begin{proof} By \cite[Lemma 5.12]{BrSa}, this action restricts to a
  free action on each symplectic leaf $\tM(A)$.  To see that it is
  canonical Poisson, first observe that the $\GL_n(\C)$-action on
  $(\fP^1/ \fP^{r+1})^\vee_{reg} \times T^* \GL_n(\C)$ given by
  $h\cdot(Y,(g,X))=(Y,gh^{-1},Ad^*(h)X)$ is canonical Poisson with
  moment map $(Y,(g,X))\mapsto X$.  Since it commutes with the action
  of $U$, it induces a canonical Poisson action on the Poisson
  reduction with moment map given by the same formula.  It is easy to
  check that this action corresponds to the given action on $\tM(P,r)$
  under the isomorphism of Proposition~\ref{plemma}.  Since
  $(Y,(g,X))$ corresponds to $(\psi_{Y,X},g)$ and
  $\res(\psi_{Y,X})=X$, we obtain the desired expression for the
  moment map.  Finally, by \cite[Lemma 5.11]{BrSa}, $\mu$ is even a
  submersion when restricted to any symplectic leaf.
\end{proof}

We are now ready to construct the moduli space of framed connections
on  $\proj(\C)$ with fixed combinatorics.    Recall that $\bfx$ is a
finite set of points in $\proj(\C)$ indexed by $I$.  Let
$\bfP = \{P_i\}_{i \in I}$ be a collection of uniform parahoric
subgroups with periods $e_i$ such that $P_i \in \GL_n(\fo_i)$, and let  $\bfr
= (r_i)_{i \in I}$ with $r_i \ge 0$ and $\gcd(r_i, e_i) = 1$.  Also,
fix maximal tori $T_i$ such that $T_i(\fo)\subset P_i$.

It is immediate from Lemma~\ref{faction} that the diagonal action of
$\GL_n(\C)$ on $\prod_{i \in I} \tM(P_i, r_i)$ is free canonical
Poisson and that its moment map $\mu=\sum_{i\in I}\res_i$ is a
submersion.
\begin{definition}\label{tMg}
 Define $\tM(\bfx, \bfP, \bfr)$ as the Poisson reduction
\begin{equation*}
\tM(\bfx, \bfP, \bfr)=\prod_{i \in I} \tM(P_i, r_i) \sslash_0 \GL_n(\C).
\end{equation*}
We also set $\tMg(\bfx, \bfP, \bfr)=\mu^{-1}(0)$ and $\A(\bfx, \bfP,
\bfr)=\prod_{i \in I} \A(P_i, r_i)$.
\end{definition}

\begin{proposition}\label{mma}
  There is a smooth map $\mmA: \tM(\bfx, \bfP, \bfr) \to \A(\bfx,
  \bfP, \bfr)$ which assigns a normalized formal type $\tA_i$ to $\n$
  at each pole $x_i$.  The fiber $\mmA^{-1} (\tilde{\bfA})$ is isomorphic to
  $\tM(\bfA)$, and the symplectic leaves of $\tM(\bfx, \bfP, \bfr)$ are
  the connected components of these fibers.
\end{proposition}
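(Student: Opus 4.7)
The plan is to construct the map $\mmA$ by gluing together the local maps $\mA_i$ from Proposition~\ref{A} and verifying that the product descends to the Poisson reduction, then to identify fibers by invoking Theorem~\ref{modthm} inside each fiber, and finally to apply the general Poisson-reduction statement Proposition~\ref{plem2} to pin down the symplectic leaves.

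First, I would take the product $\prod_{i\in I}\mA_i:\prod_{i\in I}\tM(P_i,r_i)\to\A(\bfx,\bfP,\bfr)$. Each factor is smooth by Proposition~\ref{A}. To see that this product map descends to $\tM(\bfx,\bfP,\bfr)$, observe that $\mA_i$ depends on $(U_ig_i,\a_i)$ only through $\pi_{\fP_i}(\Ad^*(g_i)\a_i)$, and this quantity is manifestly invariant under the $\GL_n(\C)$-action $h\cdot(U_ig_i,\a_i)=(U_ig_ih^{-1},\Ad^*(h)\a_i)$; hence the product is constant on orbits of the diagonal $\GL_n(\C)$-action and, in particular, on $\mu^{-1}(0)\subset\prod_{i\in I}\tM(P_i,r_i)$. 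Since the quotient map $\tMg(\bfx,\bfP,\bfr)\to\tM(\bfx,\bfP,\bfr)$ is a smooth submersion (the action is free and $0$ is a regular value by Lemma~\ref{faction}), the induced map $\mmA$ is smooth. By construction, $\mmA$ assigns to the class of $(V,\n,\bfg)$ the tuple of normalized formal types $(\tA_i)_{i\in I}$.

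For the fiber description, fix $\tilde{\bfA}=(\tA_i)\in\A(\bfx,\bfP,\bfr)$. By Proposition~\ref{A}, $(\prod_i\mA_i)^{-1}(\tilde{\bfA})=\prod_{i\in I}\tM(A_i)$ as a subspace of $\prod_i\tM(P_i,r_i)$. Intersecting with $\tMg(\bfx,\bfP,\bfr)=\mu^{-1}(0)$ and quotienting by the diagonal $\GL_n(\C)$ gives $(\prod_{i\in I}\tM(A_i))\sslash_0\GL_n(\C)$, which is exactly $\tM(\bfA)$ by Theorem~\ref{modthm}. Thus $\mmA^{-1}(\tilde{\bfA})\cong\tM(\bfA)$.

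For the symplectic leaves, recall from Proposition~\ref{plemma} that the symplectic leaves of each local piece $\tM(P_i,r_i)$ are precisely the fibers $\tM(A_i)$ of $\mA_i$, so the symplectic leaves of $\prod_{i\in I}\tM(P_i,r_i)$ are products $\prod_{i\in I}\tM(A_i)$. Lemma~\ref{faction} verifies the hypotheses of Proposition~\ref{plem2} (free canonical Poisson action with submersive moment map), and the moment map on a symplectic leaf reduces to the moment map of $\GL_n(\C)$ acting on $\prod_{i\in I}\tM(A_i)$ by Proposition~\ref{mm}. Hence the symplectic leaves of $\tM(\bfx,\bfP,\bfr)$ are the connected components of the symplectic reductions $\prod_{i\in I}\tM(A_i)\sslash_0\GL_n(\C)\cong\tM(\bfA)$, which are exactly the connected components of $\mmA^{-1}(\tilde{\bfA})$.

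The main obstacle is checking the clean-intersection hypothesis in Proposition~\ref{plem2}, namely that the $\GL_n(\C)$-orbits meet each symplectic leaf $\prod_{i\in I}\tM(A_i)$ cleanly. However, the $\GL_n(\C)$-action preserves each leaf since it does not alter the formal types (the action changes $g_i\mapsto g_ih^{-1}$ and $\a_i\mapsto \Ad^*(h)\a_i$, leaving $\pi_{\fP_i}(\Ad^*(g_i)\a_i)$ unchanged), so every orbit is contained in a single leaf and the intersection is automatically clean. This reduces everything to known results, and no further computation is required.
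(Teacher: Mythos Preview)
Your proof is correct and follows essentially the same route as the paper: take the product of the local maps $\mA_i$, use $\GL_n(\C)$-equivariance to descend to the Poisson reduction, identify the fibers via Proposition~\ref{A} and Theorem~\ref{modthm}, and invoke Proposition~\ref{plem2} for the symplectic leaves. Your treatment is in fact slightly more detailed than the paper's, since you supply an explicit argument for clean intersection (the action preserves each leaf, so orbits lie inside leaves), whereas the paper simply asserts it.
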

\begin{proof} Since the maps $\mA_i : \tM(P_i, r_i) \to \A(P_i, r_i)$
  are $\GL_n(\C)$-equivariant (where $\GL_n(\C)$ acts trivially on
  $\A(P_i, r_i)$), $\prod_{i \in I} \mA_i$ induces the desired map
  $\mmA$. The statement about the fibers of $\mmA$ follows from
  Proposition~\ref{A} and the construction of $\tM(\bfA)$ in
  Theorem~\ref{modthm}.  Finally, the symplectic leaves of $\prod_i
  \tM(P_i, r_i)$ are given by $\prod_i \tM(A_i)$, and they  
intersect the $\GL_n(\C)$-orbits cleanly.  By Proposition~\ref{plem2}, 
the symplectic leaves of $\tM( \bfx, \bfP, \bfr)$ are the connected
components of  $\tM(\bfA)$.
\end{proof}

\begin{theorem}\label{modspace}
  The Poisson 
manifold $\tM (\bfx, \bfP, \bfr)$ is isomorphic to the moduli
  space of isomorphism classes of triples $(V, \n, \bfg)$, where $(V,
  \n, \bfg)$ satisfies the first two conditions of Definition
  \ref{tmdef}, and $(V_i, \n_i)$ contains a regular stratum of the
  form $(P_i, r_i, \b)$.  The manifold $\tMg (\bfx, \bfP, \bfr)$ is
  isomorphic to the moduli space of isomorphism classes $(V, \n,
  \bfg)$ satisfying the conditions above, with a fixed global
  trivialization $\phi$.
\end{theorem}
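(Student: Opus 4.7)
The plan is to construct an explicit bijection between $\tMg(\bfx, \bfP, \bfr) = \mu^{-1}(0)$ and triples $(V, \n, \bfg)$ with fixed global trivialization $\phi$ of $V \cong \struct_{\proj}^n$, and then to descend along the $\GL_n(\C)$-action of Lemma~\ref{faction} to obtain the statement for $\tM(\bfx, \bfP, \bfr)$ itself. Everything is engineered locally so that, once the bijection of underlying sets is in place, the smooth and Poisson structures transfer tautologically from the construction of $\tM(P_i, r_i)$ and the Poisson reduction in Definition~\ref{tMg}.

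For the forward direction, I would take $(V, \n, \bfg, \phi)$ and extract from $[\n]_\phi$---a global $\gl_n$-valued meromorphic $1$-form on $\proj$ with poles only at $\bfx$---its local principal part at each $x_i$. Via the residue-trace pairing this defines a functional $\a_i \in \gl_n(\fo_i)^\vee$ characterized by $\a_i(Y) = \Res_{x_i} \Tr(Y \cdot [\n]_\phi)$. The assumption that $g_i$ is a compatible framing (Definition~\ref{framingdef}) unwinds exactly to $\pi_{\fP_i}(\Ad^*(g_i) \a_i) \in (\fP_i/\fP_i^{r_i+1})^\vee_\reg$, so $(U_i g_i, \a_i) \in \tM(P_i, r_i)$. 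Applying the residue theorem to the scalar meromorphic $1$-form $\Tr(Y \cdot [\n]_\phi)$ for $Y \in \gl_n(\C)$ (a constant matrix belonging to every $\gl_n(\fo_i)$) produces $\sum_i \res_i(\a_i) = 0$, placing the tuple in $\mu^{-1}(0) = \tMg(\bfx, \bfP, \bfr)$.

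For the inverse, starting from $((U_i g_i, \a_i))_i \in \tMg(\bfx, \bfP, \bfr)$, I would reconstruct $[\n]_\phi$ via a Mittag-Leffler argument. Each $\a_i$ corresponds via the trace-residue pairing to a principal part in $(\gl_n(F_i) \otimes \Omega^1)/(\gl_n(\fo_i) \otimes \Omega^1)$. Setting $D = \sum_i x_i$ and taking cohomology of
\begin{equation*}
0 \to \gl_n \otimes \Omega^1_\proj \to \gl_n \otimes \Omega^1_\proj(*D) \to \bigoplus_i (\gl_n(F_i) \otimes \Omega^1)/(\gl_n(\fo_i) \otimes \Omega^1) \to 0,
\end{equation*}
together with the facts $H^0(\proj, \Omega^1_\proj) = 0$ and $H^1(\proj, \Omega^1_\proj) \cong \C$, shows that the obstruction to realizing prescribed principal parts as a global meromorphic form is precisely the total residue $\sum_i \res_i$, and that the realization is unique when this vanishes. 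Proposition~\ref{uniqft} then ensures that the regularity of each $\a_i$ produces a regular stratum $(P_i^{g_i}, r_i, \b)$ contained in $(V_i, \n_i)$, so the $g_i$ are automatically compatible framings for the reconstructed $\n$.

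Finally, a change of global trivialization $\phi \mapsto \phi \circ h^{-1}$ with $h \in \GL_n(\C)$ transforms the extracted data by $\a_i \mapsto \Ad^*(h) \a_i$ and $g_i \mapsto g_i h^{-1}$, which is exactly the $\GL_n(\C)$-action on $\tMg$ identified in Lemma~\ref{faction}. Passing to the quotient on both sides yields the claim for $\tM(\bfx, \bfP, \bfr)$. I expect the Mittag-Leffler step to be the main technical point, since this is where the moment map condition $\mu=0$ is used in an essential way; the remaining content---matching of smooth, Poisson, and framing structures---is routine given the local theory developed above and in \cite{BrSa}.
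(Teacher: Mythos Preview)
Your proposal is correct. The argument for $\tMg$ via Mittag--Leffler and the residue theorem is essentially the paper's own, though you spell out the cohomological mechanism more explicitly than the paper does (it simply asserts that the vanishing of $\sum_i\Res_i(\a_{i\nu}\nu)$ allows one to glue to a global form and says the bijection is ``easily checked''). Where you genuinely diverge is in the derivation of the $\tM$ statement: you obtain it by quotienting your bijection for $\tMg$ along the $\GL_n(\C)$-action of Lemma~\ref{faction}, whereas the paper instead fibers $\tM(\bfx,\bfP,\bfr)$ over the space of normalized formal types via the map $\mmA$ of Proposition~\ref{mma}, identifies each fiber with the known moduli space $\tM(\bfA)$, and invokes Theorem~\ref{modthm}. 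Your route is more self-contained and does not rely on the fixed-formal-type result; the paper's route ties the identification directly to the symplectic leaf decomposition, which is conceptually useful later. One small point: your citation of Proposition~\ref{uniqft} in the inverse direction is misplaced---that proposition concerns uniqueness of the formal type given a stratum, not existence of the stratum. What you actually need there is the definitional check that the condition $\pi_{\fP_i}(\Ad^*(g_i)\a_i)\in(\fP_i/\fP_i^{r_i+1})^\vee_{\reg}$ unwinds to the containment condition in Definition~\ref{framingdef}, which is immediate.
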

\begin{proof}
By Proposition~\ref{uniqft}, we may associate
a unique formal type $A$ to every formal connection that contains
a regular stratum $(P, r, \b)$.  Therefore, if $(V, \n, \bfg)$ satisfies the conditions above,
there is a unique element $\bfA \in \A(P_i, r_i)$ given by the formal type of $(V, \n, \bfg)$
at each singular point.  In particular, by Theorem~\ref{modthm}, $(V, \n, \bfg)$
corresponds to a unique point in $\tM(\bfA)$.  However,
by Proposition~\ref{mma}, $\tM(\bfA) \cong \mmA^{-1} (\bfA)$.  On the other hand, 
every point  $p \in \tM(\bfx, \bfP, \bfr)$ corresponds to a unique
connection with formal type $\mmA(p)$.

Now, suppose $m = (U_i g_i, \a_i)_{i \in I} \in \tMg (\bfx, \bfP,
\bfr)$.  Fix a global form $\nu \in \Om^1_{\proj}$.  By \eqref{pair},
we may associate to $\a_i$ a unique meromorphic form $\a_{i \nu} \nu$
with coefficients in $\gl_n(F_i)$.  Since $\mu(m) = 0$, $\sum_{i \in
  I} \Res_i (\a_{i \nu} \nu) = 0$.  It follows that $q$ determines a
global form $\a \nu$, and thus a global meromorphic connection $\n = d
+ \a \nu$ on the trivial rank $n$ vector bundle over $\proj$.  It is
easily checked that this gives a bijection between points in $\tMg
(\bfx, \bfP, \bfr)$ and triples $(V, \n, \bfg)$.

\end{proof}

\section{Integrable deformations}\label{four}
Let $X = \proj(\C)$, and let $V$ be an $n-$dimensional trivial vector
bundle on $X$.  In this section, we will consider the deformations of
a connection $(V,\n)$ which contains a regular stratum at each
singularity.

\subsection{Formal  Deformations}\label{4.1}
Without loss of generality, take $x_1 = 0$ and fix a parameter $z$ at
$0$.  We will suppress the subscripts on $(P_i, r_i, \b_i)$, etc. when
we work locally at $0$.  Let $F$ be the field of Laurent series at $0$
and $\fo \subset F$ the ring of power series.  Define $\hV$ to be the
formal completion of $V$ at $0$, and let $\hn$ be the induced formal
connection.

Now, let $\Df = \Spec (\fo)$, and fix a standard parahoric $P$ and an
integer $r$ with $\gcd(r, e) = 1$.  We also fix a torus $T$, with
$T(\fo) \subset P$, as in \eqref{torus}.  Let $\De$ be an analytic
polydisk; we denote its ring of functions by $R$.  A \emph{formal flat
  deformation} is a flat, meromorphic connection $(\bar{V}, \bn)$ on
$\Df \times \De$ satisfying the following properties:
\begin{itemize}
\item the vector bundle $\bar{V}$ is isomorphic to the trivial rank
  $n$ vector bundle, and
\item  the restriction of $\bn$
to $\Df \times \{y\}$, denoted by $\hn_y$, contains a regular  stratum $(P_y, r, \b_y)$.  
\end{itemize}  
Fix a trivialization $\phi$ of $\bar{V}$ so that we may identify all
other trivializations with elements of $\GL_n(\fo\otimes R)$.  We say
that the deformation $(\bar{V}, \bn)$ is \emph{framed} if there exists
a trivialization $g \in \GL_n (R)$ with the property that $P_y =
P^{g(y)}$, $\b_y = \b^{g(y)}$, and the regular stratum $(P_y, r,
\b_y)$ is centralized by $T_y:=T^{g(y)}$.  In particular, any
representative $(\b_y)_\nu \in \fP_y^{-r}$ for $\b_y$ lies in $\ft_y +
\fP_y^{-r+1}$ by \cite[Remark 3.5]{BrSa} (with $\ord(\nu)=-1$).

We denote $\fPD^\ell = \fP^\ell \otimes R,$ $\bfPD^\ell =
\fPD^\ell/\fPD^{\ell+1}$, and $\ftD = \ft \otimes R$.  We define $\PD$
and $\TD$ similarly.  Suppose that $(\bar{V}, \bn)$ is framed by $g$,
and fix a nonzero one-form $\nu$ at $0$.  If $\nu = u \frac{dz}{z},$
write $\frac{z \nu}{d z} = u$.  Let $A (y)$ be the formal type of
$\bn$ at $y$.  Using the pairing in \eqref{pair}, we may choose a
representative $A_\nu(y)$ for $A(y)$ of the form
\begin{equation*}
A_\nu (y) = (\frac{-r}{n} t_{-r}(y) \ot^{-r} + \ldots + \frac{-1}{n} t_{-1}(y) \ot^{-1} + t_0(y))\frac{dz}{z \nu},
\end{equation*}  
with $t_i(y) \in \tfl_\De$.  For example, $A_{\frac{dz}{z}} =
(\frac{-r}{n} t_{-r}(y) \ot^{-r} + \ldots + \frac{-1}{n} t_{-1}(y)
\ot^{-1} + t_0(y))$.

Recall that any element of $\ft$ can be written as a Laurent series
$t = \sum_{i = -N}^\infty t_j \ot^j$ with $t_i \in \tfl$.
We define an endomorphism $\delta_e$ of $\ft$ via $\delta_e(t) =  \sum_{i = -N}^\infty \frac{i}{e} t_i \ot^i$.
\begin{lemma}\label{del}  Suppose $t \in \ft_{\De}$.
Then, $z \pd_z t - [t, H_T]= \d_e t$.  Moreover,
any solution $B \in \Om^1_\De((\ft + \fP^\ell)/ \fP^\ell)$ to the differential equation 
$ z \pz B - [B, H_T]= \delta_e t + t_0 + \fP^\ell$ has the form
$B = t + f+ \fP^\ell$, where $f\in \tfl_\De$.  There is no solution
when $\ell \ge 0$ and $t_0 \ne 0$.
\end{lemma}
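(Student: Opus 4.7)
The plan is to establish the commutator identity by an explicit block-wise calculation, then analyze the differential equation for $B$ coefficient-by-coefficient in the basis $\{\ot^j\}_{j \in \Z}$ of $\ft$.

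First, I write $t = \sum_j t_j \ot^j$ with $t_j \in \tfl_\De$. Since each $t_j$ is block-scalar (so commutes with $H_T$) and has no $z$-dependence, the identity $z\pd_z t - [t, H_T] = \delta_e t$ reduces by linearity to showing $z\pd_z \ot^j - [\ot^j, H_T] = (j/e)\ot^j$. Because $\ot$ and $H_T$ are block diagonal with matching blocks $\oe$ and $H_E$, this further reduces to the single-block identity $z\pd_z \oe^j - [\oe^j, H_E] = (j/e)\oe^j$. To verify it, write $j = qe + s$ with $0 \le s < e$, so that $\oe^j = z^q \oe^s$, and split $\oe^s = U_s + z L_s$, where $U_s$ carries $1$'s at positions $(i, i+s)$ for $1 \le i \le e-s$ and $L_s$ carries $1$'s at positions $(e-s+i, i)$ for $1 \le i \le s$. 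A direct calculation from $H_E = \diag(\frac{e-1}{2e}, \ldots, \frac{1-e}{2e})$ gives $[H_E, U_s] = (s/e) U_s$ and $[H_E, L_s] = ((s-e)/e) L_s$, while $z\pd_z \oe^s = z L_s$. Combining these with the Leibniz rule for $z^q \oe^s$, the two $L_s$-contributions cancel to produce $(j/e) \oe^j$.

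For the second part, I use the isomorphism $(\ft + \fP^\ell)/\fP^\ell \cong \bigoplus_{j < \ell} \tfl \cdot \ot^j$ to write $B$ uniquely as $B \equiv \sum_{j < \ell} b_j \ot^j \pmod{\fP^\ell}$ with coefficients $b_j$ in $\tfl_\De$ (with $\Om^1_\De$-coefficients treated componentwise). Applying the first identity term by term yields $z\pd_z B - [B, H_T] \equiv \sum_{j < \ell} (j/e) b_j \ot^j \pmod{\fP^\ell}$. Meanwhile, $\delta_e t + t_0$ reduces modulo $\fP^\ell$ to $\sum_{j \ne 0,\, j < \ell} (j/e) t_j \ot^j$ plus the class $[t_0]$ of $t_0$; since $\tfl \cap \fP^\ell = 0$ for $\ell \ge 1$ but $\tfl \subset \fP^\ell$ for $\ell \le 0$, this class equals $t_0$ when $\ell \ge 1$ and vanishes otherwise. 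Matching $\ot^j$-coefficients forces $b_j = t_j$ for every $j \ne 0$ with $j < \ell$, while the $\ot^0$-equation reads $0 = [t_0]$.

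Everything interesting happens in this $j = 0$ component. When $\ell \ge 1$ and $t_0 \ne 0$ the $\ot^0$-equation is inconsistent, yielding the non-existence claim; otherwise $b_0$ is unconstrained, so setting $f := b_0 \in \tfl_\De$ (or $f = 0$ when $\ell \le 0$, since then the $\ot^0$-component does not appear) gives $B \equiv t + f \pmod{\fP^\ell}$ with all other coefficients rigidly determined by the $b_j = t_j$ equations. The main (and essentially only) technical obstacle is the explicit block-wise computation of $[H_E, \oe^s]$; once that is in hand, the rest is routine linear algebra on the weight spaces of $\delta_e$.
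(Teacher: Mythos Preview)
Your proof is correct and follows essentially the same route as the paper: reduce to a single block $T = E^\times$, verify the identity $z\pd_z \oe^j - [\oe^j, H_E] = (j/e)\oe^j$, and then match coefficients of $\ot^j$ in $B$. The paper's proof merely asserts this identity as ``a direct calculation,'' whereas you carry it out explicitly via the decomposition $\oe^s = U_s + zL_s$; your coefficient-matching analysis of the $\ot^0$-term is likewise more careful than the paper's one-line remark (and in fact your threshold $\ell \ge 1$ is the sharp one, since $\tfl \subset \fP^0$).
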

\begin{proof}
  Since the equations are block diagonal in $\ft$, we may immediately
  reduce to the case where $T = E^\times$.  A direct calculation shows
  that $z \pz \ot^{i} - [\ot, H_E] = \frac{i}{ e} \ot^i = \delta_e
  (\ot^i)$.  This proves the first statement.  The second follows by
  applying the same calculation to each term of $B$ up to $\ot^\ell$.
  Note that $z \pz \ot^0 - [\ot^0, H_E] = 0$, so there is no solution
  when $t_0 \ne 0$.
\end{proof}

As in the previous section, we will let $p \cdot [\bn]_g$ denote the
$\dz$ part of the gauge transformation formula: $p \cdot [\bn]_g =
\Ad(p) ([\bn]_g) - (\dz p)p^{-1}$.  Similarly, upon fixing $\nu \in
\Om^1_{F} (R)$, we write $p \cdot M = \Ad(p) (M) - (\tau_\nu p)
p^{-1}$ when $M \in \gl_n (F \otimes_\C R)$.  We will use $\dy$ and
$\bd$ to denote the exterior differential on $\De$ and $\proj \times
\De$, respectively.  Suppose that $\tA_\nu$ is the normalized formal
type associated to $A_\nu$.  By Proposition~\ref{gA}, there exists an
element $p \in P_\De$ such that
\begin{equation*}
p(y) \cdot [\hn_y]_{g } = \tA_\nu(y) \nu.
\end{equation*}
Let $\dy$ be the exterior differential on $\De$.  Define
\begin{equation*}
A_\De(y) = \sum_{i = -r}^{-1}  \ot^{-i} \dy t_i,
\end{equation*} 
so that $\dy A_{\frac{dz}{z}} = \delta_e A_\De  + \dy t_0$.

\begin{proposition}\label{totalconnection} Let $p$ be as above.
  Then, $p \cdot [\bn]_g - (\dy p) p^{-1} = \tA_\nu \nu + A_\De + f $,
  where $f \in \Om^1_{\De} (\tfl)$ is closed.  Moreover, $t_0 (y)$
  must be constant.  Conversely, if $q \in \GL_n(F\otimes R)$, $f$ is
  closed, and $t_0 (y)$ is constant, then $[\bn'] = q^{-1} \cdot
  \left(\tA_\nu \nu + A_\De +f \right) + q^{-1} (\dy q)$ determines a
  flat meromorphic connection $\bn'$ on $\bV$.
\end{proposition}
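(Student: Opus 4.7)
The plan is to exploit flatness of $\bn$ — preserved by gauge transformation — to derive a PDE for the $dy$-part of the full gauge transform and then solve it using Proposition~\ref{cores} together with the identity $z\pz\ot^{-i}=(-i/e)\ot^{-i}$. Let $C = p\cdot[\bn]_g - (\dy p)p^{-1}$, which is the gauge transform of $[\bn]_g$ by $p$ with respect to the full differential $\bd=\dz+\dy$. By Proposition~\ref{gA}, $C$ has the form $\tA_\nu\nu + C_y$ with $C_y\in\gl_n(F\otimes R)\otimes\Om^1_\De$. Since gauge transformation preserves flatness, $\bd C + C\wedge C = 0$; taking $\nu=\frac{dz}{z}$, this decomposes into the $dz\wedge dy$-component
\begin{equation*}
z\pz C_y + [\tA_\nu, C_y] = \dy\tA_\nu,
\end{equation*}
and the $\Om^2_\De$-component $\dy C_y + C_y\wedge C_y = 0$.

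A direct computation shows that $A_\De$ is a particular solution of the first equation: $[\tA_\nu, A_\De]=0$ since both lie in the abelian $\ft_\De$, while $z\pz A_\De = \delta_e A_\De$. Matching against the stated identity $\dy\tA_\nu = \dy A_{\frac{dz}{z}} = \delta_e A_\De + \dy t_0$ forces $\dy t_0 = 0$, giving the constancy of $t_0$. For uniqueness, I would show the homogeneous equation $z\pz C^h + [\tA_\nu, C^h]=0$ has solutions only in $\tfl_\De$. Decompose $C^h = \pi_\ft(C^h) + \tilde C$ using the projection from Proposition~\ref{cores}. Applying $\pi_\ft$ kills the bracket term (since it is a $\ft$-module map and $\tA_\nu\in\ft$), and using that $\pi_\ft$ commutes with $z\pz$ on the relevant graded pieces, yields $z\pz(\pi_\ft C^h)=0$; as $\ker\delta_e|_\ft=\tfl$, this gives $\pi_\ft C^h\in\tfl_\De$. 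For the complement $\tilde C\in\ker\pi_\ft$, graded induction on $\fP^\bullet_\De$ concludes: if $\tilde C\in\fP^\ell_\De\setminus\fP^{\ell+1}_\De$, then the only contribution to the equation at level $\ell-r$ is $[\bnu,\tilde C_\ell]$, forcing $\ad(\bnu)\tilde C_\ell=0$; since $\tilde C_\ell\in W_\ell$, part~\eqref{cores5} of Proposition~\ref{cores} gives $\tilde C_\ell=0$, contradicting the choice of $\ell$. Hence $C_y = A_\De + f$ with $f\in\Om^1_\De(\tfl)$, and the $\Om^2_\De$-part of flatness yields $\dy f = 0$ since $\dy A_\De = 0$ and $C_y\wedge C_y = 0$ by abelianness of $\ft$.

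For the converse, given closed $f$, constant $t_0$, and arbitrary $q\in\GL_n(F\otimes R)$, the matrix-valued one-form $\tA_\nu\nu + A_\De + f$ is flat by the particular-solution calculation ($dz\wedge dy$-part) and by $\dy f = \dy A_\De = 0$ together with commutativity ($\Om^2_\De$-part); gauge transformation by $q^{-1}$ preserves flatness, giving the desired $[\bn']$. The main obstacle will be the uniqueness step: justifying the commutation of $\pi_\ft$ with $z\pz$ on each graded piece using the specific construction of $\pi_\ft$ from \cite{BrSa}, tracking orders carefully so the induction terminates, and handling the case $r=0$ separately, where condition~(4) of regularity must replace part~\eqref{cores5} of Proposition~\ref{cores}.
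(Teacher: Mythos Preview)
Your overall strategy---reduce to the flatness PDE for the $dy$-part and solve it by graded induction---is exactly the paper's, but your execution contains a genuine error that breaks several steps when $e>1$.

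You repeatedly assume $\tA_\nu\in\ft_\De$. This is false: $\tA_\nu=A_\nu+H_T$, and $H_T\notin\ft$ whenever $e>1$. Indeed, $H_T$ is block diagonal with blocks $H_E=\diag((e-1)/2e,\ldots,(1-e)/2e)$, and one checks directly that $[\oe,H_E]\ne 0$, so $H_E\notin E$ and $H_T\notin\ft$. (The paper itself notes in the proof of Proposition~\ref{gA} that $\pi_\ft(H_T)\in\fP^1$, confirming $H_T\notin\ft$.) Consequently:
\begin{itemize}
\item Your claim ``$[\tA_\nu,A_\De]=0$ since both lie in the abelian $\ft_\De$'' is wrong; $[H_T,A_\De]$ does not vanish.
\item Your claim ``$z\pz A_\De=\delta_e A_\De$'' is wrong; Lemma~\ref{del} says $z\pz t-[t,H_T]=\delta_e t$, and the bracket term is nonzero. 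The correct identity is that the \emph{combination} $z\pz A_\De+[\tA_\nu,A_\De]=z\pz A_\De-[A_\De,H_T]-[A_\De,A_\nu]=\delta_e A_\De$, which is precisely what Lemma~\ref{del} packages.
\item Your claim that applying $\pi_\ft$ to the homogeneous equation kills the bracket fails: $\pi_\ft$ is a $\ft$-module map, so it commutes with $\ad(A_\nu)$, but there is no reason it commutes with $\ad(H_T)$ or with $z\pz$ separately. Your decomposition $C^h=\pi_\ft(C^h)+\tilde C$ therefore does not split the equation.
\end{itemize}

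Your derivation of $\dy t_0=0$ is also circular: you deduce it by ``matching'' after asserting $A_\De$ is a particular solution, but $A_\De$ is a solution only \emph{if} $\dy t_0=0$. The constancy of $t_0$ must be extracted from the existence of the actual solution $C_y$; the paper does this inside the induction (at the step $\ell=1$, using part~\eqref{cores3} of Proposition~\ref{cores} and $\tfl\cap\fP^1=\{0\}$).

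The fix is to use Lemma~\ref{del} throughout: treat $z\pz-[\,\cdot\,,H_T]$ as a single operator equal to $\delta_e$ on $\ft$, and run the induction directly on the filtration $\ft+\fP^\ell$ as the paper does, rather than attempting to split via $\pi_\ft$. The $r=0$ case, which you correctly flag as needing separate treatment, is handled in the paper via the eigenvalue condition in part~(4) of the definition of regularity.
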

\begin{rmk}  The case $e=1$ is proved in the Appendix of \cite{Boa}.
\end{rmk}
\begin{proof}
Note that the connection determined by $p \cdot [\bn]_g - (\dy p) p$ is flat,
since $\bn$ is flat and $-(\dy p) p$ is simply the $\dy$
 part of the gauge transformation formula.  
Conversely, if the connection determined by 
$\tA_\nu \nu + A_\De + f$ is flat, then $\bn'$ is flat by the same
argument.

Without loss of generality, set $\nu = \frac{dz}{z}$.
It suffices to show that whenever
$[\bn] = \tA_{\frac{dz}{z}} \frac{dz}{z} +  B $ for some 
$B \in \Om^1_\De (\gl_n(F)),$ then $\bn$ is flat if and only if
$B$ has the form $ A_\De + f$.
We observe that $\bn$ is flat if and only if it satisfies the conditions
\begin{gather}
    d_\De (A_{\frac{dz}{z}}) - z \pz B + [B, A_{\frac{dz}{z}}+H_T] =
    0\label{flata}\\ \text{ and }\ \dy B+B\wedge B=0.\label{flatb}
\end{gather}
If $B = A_\De + f$, \eqref{flatb} holds trivially while \eqref{flata}
follows from the first part of Lemma~\ref{del}, so $\bn$ is flat.

We now prove the converse.  If $r = 0$, then $e = 1$ and $H_T = 0$.
In this case, we may take $A_{\frac{dz}{z}} = t_0 (y)$ to be a regular
diagonal matrix with entries in $R$; moreover, no two eigenvalues of
$A_{\frac{dz}{z}}$ differ by an integer.  Setting $B=\sum
B_{\ell}z^{\ell}$ with $B_{\ell}\in\gl_n(\C)$, \eqref{flatb} reduces
to $[B_\ell, A_{\frac{dz}{z}}] = \ell B_\ell$ when $\ell \ne 0$.  The
eigenvalue condition now implies that $B_\ell = 0$.  whenever $\ell
\ne 0$, by the condition on the eigenvalues of $A_{\frac{dz}{z}}$.  On
the other hand, $[B_0, A_{\frac{dz}{z}}] = -\dy (A_{\frac{dz}{z}})$.
Since the right hand side is a diagonal matrix, both sides must be
identically $0$.  It follows that $\dy (A_{\frac{dz}{z}}) = 0$ and
$B_0 \in \Om^1_{\De}(\tfl)$ while the fact that $B_0$ is closed
follows from \eqref{flatb}.

We now consider the case $r \ge 1$.  In the following, let $t_0 (y)$
be the constant term of $A_{\frac{dz}{z}}$.  Suppose, by induction,
that $B \in A_\De + f+ \Om^1_\De(\tfl+\fP^{\ell})$.  (Note that $B \in
\Om^1(\fP^L)$ for some $L \le -r$, so the first inductive step is
trivially satisfied for $\ell = L $).  Applying the first part of
Lemma~\ref{del} with $t=A_\De$ gives
\begin{equation*}
  d_\De (A_{\frac{dz}{z}}) +  [B, H_T] -  z \pz B \in \dy t_0+ \Om^1_\De
  (\fP^\ell).
\end{equation*}
We deduce from \eqref{flata} that $[B, A_{\frac{dz}{z}}] \in \dy t_0 +
\Om^1_\De(\fP^{\ell})$.

When $\ell < 1$, $\dy t_0 \in \Om^1_{\De} (\fP^{\ell})$.  This implies
that $[B, A_{\frac{dz}{z}}] \in \Om^1_\De (\fP^{\ell})$, so $B \in
\Om^1_{\De} (\ft + \fP^{\ell+r})$.  Next, consider $\ell =1$.  By the
$\ell=0$ step, we know that $B \in \Om^1_{\De} (\ft + \fP^{r})$.  Part
(3) of Proposition~\ref{cores} shows that $[B, A_{\frac{dz}{z}}] +
\Om^1_{\De} (\fP^1) \in \ker(\bpit\otimes \id_{\Om^1_{\De}})$.  Since
$\dy t_0 \in \Om^1_{\De} (\tfl),$ part (1) of the same proposition
gives $\dy t_0 \in \Om^1_{\De} (\fP^1)$.  Since $\tfl \cap \fP^1 =
\{0\}$, we see that $\dy t_0 = 0$.  Thus, we may conclude that the
inductive hypothesis implies that $[B, A_{\frac{dz}{z}}] \in \Om^1_\De
(\fP^{\ell})$ for $\ell \ge 1$ as well.  As before, $B \in \Om^1_{\De}
(\ft + \fP^{\ell+r})\subset \Om^1_{\De} (\ft + \fP^{\ell+1})$.

To complete the inductive step,
we apply the second part of Lemma~\ref{del} to obtain
$B \in A_\De +\Om^1_\De(\tfl+\fP^{\ell+1})$.  Using the fact that the
sum $\tfl+\fP^\ell$ is direct for $\ell\ge 1$, we actually obtain
$f\in\Om^1_\De(\tfl)$ such that $B \in A_\De +f+\Om^1_\De(\fP^{\ell})$
for all $\ell$.  The result now follows since $\bigcap_\ell\fP^\ell=0$.




\end{proof}
\begin{definition}
We say that a compatible framing $g$ for $\bn$  is \emph{good} if 
there exists $p \in \PD^1$  such that 
$p \cdot  [\bn]_{g} - (\dy p) p^{-1} = \tA_\nu \nu+ A_\De. $
\end{definition}
\begin{proposition}\label{good}
Every framed flat deformation has a good compatible framing.
\end{proposition}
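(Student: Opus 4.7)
The plan is to modify the initial compatible framing $g$ by left multiplication with a $y$-dependent element of the torus $\Tfl$ that exactly absorbs the closed $\tfl$-valued one-form produced by Proposition~\ref{totalconnection}. That proposition yields $p \in \PD^1$ with $p \cdot [\bn]_g - (\dy p) p^{-1} = \tA_\nu \nu + A_\De + f$ for some closed $f \in \Om^1_\De(\tfl)$. Since $\tfl$ is abelian and $\De$ is an analytic polydisk, the Poincar\'e lemma supplies $F \colon \De \to \tfl$ with $\dy F = f$; I would then set $h = \exp(F)$, which is valued in $\Tfl \subset \GL_n(\C) \cap T(\fo) \subset P$.

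Next I would define the new framing $g' = hg \in \GL_n(R)$ and the candidate gauge $p' = hph^{-1}$. The compatibility of $g'$ holds because conjugation by $h(y) \in T(\fo)$ preserves the parahoric $P$, the $T(\fo)$-invariant functional $\b$, and the centralizing torus $T$; the same observation shows $\Ad(h)$ preserves $P^1$, so $p' \in \PD^1$.

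For the defining identity of a good framing, I would interpret $p \cdot [\bn]_g - (\dy p) p^{-1}$ as the full gauge transformation $\Ad(p)[\bn]_g - (\bd p) p^{-1}$, so that the identity for $(g', p')$ corresponds to the iterated gauge along the factorization $p'g' = hpg$. Since $h \in T$ centralizes $\ft$ and $\Ad(h) H_T = H_T$ by the block structure of $\Tfl$, the map $\Ad(h)$ fixes $\tA_\nu$, $A_\De$, and $f$; combined with $(\bd h) h^{-1} = (\dy h) h^{-1} = \dy F = f$, this gives
\[
\Ad(h)(\tA_\nu \nu + A_\De + f) - (\bd h) h^{-1} = (\tA_\nu \nu + A_\De + f) - f = \tA_\nu \nu + A_\De,
\]
which is exactly the identity required for $(g', p')$. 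The main subtlety is the bookkeeping needed to identify this iterated gauge transformation with the expression $p' \cdot [\bn]_{g'} - (\dy p')(p')^{-1}$; this comes down to the multiplicativity of the gauge action along the decomposition $p'g' = hpg$, together with the fact that the extra $\dy$-contributions arising from the $y$-dependence of $g'$ are cancelled precisely by the conjugation $p' = hph^{-1}$.
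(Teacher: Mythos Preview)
Your proof is correct and follows essentially the same approach as the paper: integrate the closed $\tfl$-valued form $f$ via the Poincar\'e lemma to get $F$ (the paper's $\varphi$), and replace the framing $g$ by $e^F g$. The paper's argument is terser and leaves implicit precisely the points you spell out---that $e^F \in \Tfl$ normalizes $P$, fixes $\b$ and $T$ so that $g'$ remains compatible, that $p' = e^F p e^{-F}$ lies in $\PD^1$, and that the iterated gauge along $p'g' = e^F p g$ yields the claimed identity---but the underlying computation is identical.
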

\begin{proof}
  If $g$ is a compatible framing, there exists $p \in P$ such that $p
  \cdot ([\bn]_{g }) - (\dy p) p^{-1} = (A_{\frac{dz}{z}} + H_T)
  \frac{dz}{z} + A_\De+ f$ by Proposition~\ref{totalconnection}.
  Since $f$ is closed on $\De$, it is exact.  Choosing
  $\varphi\in\tfl_{\De}$ such that $f = \dy \varphi$, we obtain
\begin{equation*}
\Ad(e^{\varphi}) \left(p \cdot ([\bn]_{g }) - \dy(p) p^{-1}\right) - \dy \varphi
  = (A_{\frac{dz}{z}} + H_T
)\frac{dz}{z} + A_\De.
\end{equation*}
It follows that $e^{\varphi} g$ is a good compatible framing.
\end{proof}


\subsection{Global Deformations}
\begin{definition}\label{deformation}
  A \emph{framed global deformation} is a triple $(\bfg, \bV, \tn)$
  consisting of:
\begin{enumerate}
\item a trivializable rank $n$ vector bundle
$\bV$ on $\proj \times \De$;
\item  an $R$-relative connection $\tn$;
\item a collection of analytic framings $\bfg = (g_i)_{i \in I}$, $g_i
  : \De \to U_i \backslash \GL_n(\C)$;
\item the restriction of $(\bV, \tn)$ to $\proj \times \{y\}$ must lie
  in $\tM (\bfx, \bfP, \bfr)$ with compatible framing
  $\bfg(y)$.\end{enumerate} We say that a framed deformation is
\emph{integrable} if there exists a flat $\C$-relative connection $\bn$ on
$\proj \times \De$ with $\proj$ part $\tn$.
\end{definition}
We note that $(\bfg, \bV, \tn)$ determines a smooth map 
$\Delta \to \tM (\bfx, \bfP, \bfr)$.
Specifically, there are maps  $g_i (y)$ and $\a_i (y)$ such that
the connection on the fiber above $y$ corresponds to the point
$(U_i g_i(y) , \alpha_i (y))_{i \in I} \in \tM (\bfx, \bfP, \bfr)$ .

Suppose that $(\bfg, \bV, \tn)$ is an integrable framed global
deformation.  If we fix a trivialization for $\bV$, we may write
$[\bn] = \a \nu + \Up$, where $\Upsilon$ is a section of $
\Om^1_{\De/\C}(\End(\bV))$ with poles along $\{x_i\}$.  The curvature
of $\bn$ is given by
\begin{equation*}
\Xi (\a, \Up) = \bd (\a \nu + \Up) + \Up \wedge \a \nu + \a\nu \wedge \Up
+ \Up \wedge \Up
\in \Omega^2_{\De \times \proj}.
\end{equation*}
Thus, $\bn$ is flat if and only if the following hold:
\begin{equation}\label{flatness}
  \tau_\nu \Up  = d_\De \a +  [\Up, \a]\quad \text{ and}
  \quad 0  = d_\De \Up +\Up \wedge \Up.
\end{equation}


An integrable deformation $(\bfg, \bV, \tn)$ determines a flat formal
deformation $(\hV_i, \bn_i)$ at each singular point.  Therefore, if
$\tA_i(y)$ is the normalized formal type of $(\hV_i, \bn_i)$ at $y \in
\De$, Proposition~\ref{good} implies that there exists $p_i \in P^1_i$
such that $p_i g_i \cdot [\bn] - \Ad(p_i) (\dy g_i g_i^{-1}) - (\dy
p_i)p_i^{-1}= \tA_{i, \nu}\nu+ A_{i, \De}.$ Since $p_i^{-1} d_\De p_i
\in \Om^1_\De(\fP^1)$ by \cite[Lemma 4.4]{BrSa}, we deduce
\begin{equation}\label{Omod}
  \Up \in \Ad(g_i^{-1} p_i^{-1}) A_{i, \De} + g_i^{-1} d_\De g_i + \Om^1_\De((\fP_i^1)^{g_i}).
\end{equation}





Set $\bfre = (r_i')_{i  \in I} := (\lceil \frac{r_i}{e_i} \rceil)_{i \in I}$, where $\lceil \frac{r_i}{e_i} \rceil$
is the integer ceiling of $\frac{r_i}{e_i}$.  Let $D_{\bfr}$ be the
divisor $\sum_{i\in I} r_i'[x_i]$ on $\proj$.

\begin{definition}\label{spgr}
  Let $\sg$ denote the trivial $\gl_n$-bundle on $\proj$, and let
  $\sgr=\sg(D_{\bfr})$ be the sheaf corresponding to the divisor
  $D_{\bfr}$
\end{definition}
Note that sections of $\sgr$ have poles of order at most $r_i'$ at
$x_i$.

Fix a set of parameters $(z_i)_{i \in I}$ at each singular point $x_i$
with the property that each $z_i$ has a pole at a fixed point $x_0$.
Define a $\C$-linear map $\phi_i : \gl_n (F_i) \to \gl_n(\C)$ by
\begin{equation}\label{bphi}
\phi_i (X) = \Res_{x_i} (X \frac{dz_i}{z_i}).
\end{equation}  
Thus, $\phi_i$
extracts the constant term of $X$ at $x_i$ with respect to $z_i$.
This induces a map $\bphi_i : \gl_n(F_i)/ \fP^1_i \to \gl_n(\C) /
\fu_i$.

Next, we define a map $\vs$ which assigns a global section of $\sgr$
to a collection of principal parts at $\bfx$.  Given $X_i\in
\fg^{-r'_i}_i / \fg_i$, let $\tilde{X}_i\in H^0(\proj;\sgr)$ be the
section corresponding to the unique lift of $X_i$ to
$\gl_n(z_i^{-1}\C[z_i^{-1}])$.

\begin{definition}\label{Om0}
The map $\vs:\prod_{i \in I} \fg^{-r'_i}_i / \fg_i\to H^0 (\proj;
\sgr)$ is given by \begin{equation}\label{vs}
\vs( (X_i)_{i \in I}) = \sum_{i \in I} \tilde{X}_i.  
\end{equation}
We will usually write $X^0$ for $\vs( (X_i)_{i\in I})$.
\end{definition}

\begin{rmk} This map commutes with the adjoint action of $\GL_n(\C)$,
  i.e., $\Ad(g)(X^0)=(\Ad(g)X)^0$ for any $g\in\GL_n(\C)$.  Indeed,
  $\Ad(g)(\tilde{X}_i)=\widetilde{\Ad(g)X_i}$ for each $i$ since
  $\Ad(g)$ stabilizes $\gl_n(z_i^{-1}\C[z_i^{-1}])$.
\end{rmk}

We are now ready to describe a system of differential equations satisfied by an integrable 
deformation.
Let $(\bV, \bn)$ be a deformation of $(V, \n)$ as in Definition \ref{deformation}, corresponding
to a map $(g_i(y), \alpha_i(y))_{i \in I}$ from $\Delta$ to $\tM (\bfx, \bfP, \bfr)$.

Fix a uniformizer $z_0$ at $x_0$. 
\begin{lemma}\label{res0}
  If $\Up'$ has principal part $\Up_i$ at each $x_i$ and is
  holomorphic elsewhere, then $\Up'- \Up^0 = \Up'|_{z_0 = 0} \in
  \Om^1_{\De}(\gl_n(\C)).$ Here, $\Up^0 = \vs ((\Up_i)_{i \in I})$ as
  above.
\end{lemma}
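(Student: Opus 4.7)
The plan is to show that $\Up'-\Up^0$ extends to a holomorphic section of $\sg$ on all of $\proj$, hence is a constant, and then to evaluate this constant at $x_0$.

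First, I would unpack the construction of $\Up^0=\vs((\Up_i)_{i\in I})=\sum_{i\in I}\tilde{\Up}_i$. By Definition~\ref{Om0}, each $\tilde{\Up}_i$ is the global section of $\sgr$ corresponding to the unique lift of the principal part $\Up_i$ to $\gl_n(z_i^{-1}\C[z_i^{-1}])$. In particular, at any $x_j$ with $j\ne i$, $\tilde{\Up}_i$ is regular (since $z_i$ has no pole there), so the principal part of $\Up^0$ at $x_i$ coincides with the principal part of $\tilde{\Up}_i$, which by construction is $\Up_i$. Thus $\Up'$ and $\Up^0$ have the same principal part at every $x_i$, so $\Up'-\Up^0$ has no poles at the $x_i$'s. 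Since $\Up'$ is holomorphic away from $\bfx$ and $\Up^0\in H^0(\proj;\sgr)$ is holomorphic away from $\bfx$ as well, the difference $\Up'-\Up^0$ is a global holomorphic section of $\sg$, hence a constant element of $\gl_n(\C)$.

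To compute this constant, I would evaluate at $x_0$. By hypothesis, each $z_i$ has a pole at $x_0$, so $z_i^{-1}$ vanishes at $x_0$; consequently every polynomial in $z_i^{-1}$ without constant term vanishes there. Since $\tilde{\Up}_i\in\gl_n(z_i^{-1}\C[z_i^{-1}])$, we obtain $\tilde{\Up}_i(x_0)=0$ for every $i\in I$, and therefore $\Up^0(x_0)=0$. Evaluating the constant $\Up'-\Up^0$ at $x_0$ then yields
\begin{equation*}
\Up'-\Up^0 = \Up'(x_0) - \Up^0(x_0) = \Up'(x_0) = \Up'|_{z_0=0},
\end{equation*}
which is the desired identity.

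There is no real obstacle here: the argument is essentially a Liouville-style statement on $\proj$, and the only point requiring a moment's thought is the vanishing of $\tilde{\Up}_i$ at $x_0$, which is immediate from the definition of $\vs$ together with the normalization that the local parameters $z_i$ are chosen to have a pole at the common base point $x_0$.
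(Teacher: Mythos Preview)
Your proof is correct and follows the same approach as the paper: both argue that $\Up'-\Up^0$ has no principal parts, hence is constant (with values in $\Om^1_\De(\gl_n(\C))$), and then evaluate at $x_0$ using $\Up^0|_{z_0=0}=0$. You simply spell out in more detail why $\Up^0$ vanishes at $x_0$ (each $\tilde{\Up}_i$ lies in $\gl_n(z_i^{-1}\C[z_i^{-1}])$ and $z_i^{-1}(x_0)=0$), which the paper leaves as ``by construction.''
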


\begin{proof}
 Since $\Up'$ and $\Up^0$ have the same
  principal parts at each singular point, $\Up' - \Up^0 = X \in
  \Om^1_{\De} (\gl_n (\C))$.
By construction, $\Up^0$ and $\Up'$ are holomorphic at $x_0$, 
and  $\Up^0|_{z_0 = 0} = 0$.  
Therefore, $X = \Up' |_{z_0 = 0}$.
\end{proof}
 \begin{lemma}\label{Omzero}
   Given $\Up \in \Om^1_{\De} (H^0 (\proj; \sgr))$, define $\Up_i =
   \Up + \fg_i$.  Any integrable framed deformation $\bn$ is
   $GL_n(R)$-gauge equivalent to a deformation $ \a \nu + \Up $
   satisfying $\Up = \vs ((\Up_i)_{i \in I})$.
\end{lemma}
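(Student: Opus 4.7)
The plan is to construct a gauge transformation $g \in \GL_n(R)$, depending only on the deformation parameter $y$, that kills the component of $\Up$ at $x_0$. By Lemma~\ref{res0}, since $\Up$ has prescribed principal parts $\Up_i$ at the $x_i$ and is holomorphic elsewhere, the condition $\Up = \vs((\Up_i)_{i \in I})$ is equivalent to $\Up|_{z_0 = 0} = 0$. Thus the goal reduces to finding $g(y) \in \GL_n(R)$, independent of $z$, such that the gauge-transformed $\Om^1_\De$-part $\Up' := \Ad(g)\Up - (\dy g) g^{-1}$ vanishes at $z_0 = 0$.

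Since $g$ does not depend on $z$, $\Ad(g)$ preserves the sheaf $\sgr$ fiberwise and $(\dy g)g^{-1}$ is a globally holomorphic $\gl_n(\C)$-valued one-form on $\De$; hence $\Up' \in \Om^1_\De(H^0(\proj; \sgr))$ automatically, and its new principal parts at the $x_i$ have the right form. Evaluating the vanishing condition at $z_0 = 0$ and rearranging gives the equation
\begin{equation*}
g^{-1}\,\dy g \;=\; \Up|_{z_0=0},
\end{equation*}
a Maurer-Cartan problem on $\De$ for the $\gl_n(\C)$-valued one-form $\omega := \Up|_{z_0 = 0}$.

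The decisive ingredient is the integrability of $\bn$. The flatness equations \eqref{flatness} imply $d_\De \Up + \Up \wedge \Up = 0$. Restricting to $\{z_0 = 0\}$, which lies in the regular locus of $\Up$, yields $d_\De \omega + \omega \wedge \omega = 0$, precisely the Maurer-Cartan integrability condition for $\omega$. Since $\De$ is a simply connected analytic polydisk, the classical existence theorem for this equation produces a holomorphic $g : \De \to \GL_n(\C)$ with $g^{-1}\,\dy g = \omega$, and this $g$ provides the required gauge equivalence. The main technical point in the plan is the identification of the relevant integrability condition with a piece of the flatness of $\bn$; once this is recognized, existence on the simply connected polydisk is standard, and the verification that the new connection form $\a'\nu + \Up'$ remains of the stated type is automatic from the $z$-independence of $g$.
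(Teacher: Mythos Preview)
Your proof is correct and follows essentially the same approach as the paper: use Lemma~\ref{res0} to identify the obstruction as the constant $X=\Up|_{z_0=0}$, reduce to solving $g^{-1}\dy g=X$, and obtain the Maurer--Cartan condition $\dy X+X\wedge X=0$ by restricting the flatness equation $\dy\Up+\Up\wedge\Up=0$ to $z_0=0$. The only cosmetic difference is that the paper phrases the reduction via the $\GL_n(R)$-invariance of the image of $\vs$, while you check directly that the gauged $\Up'$ stays in $\Om^1_\De(H^0(\proj;\sgr))$; your explicit mention of the simple connectivity of $\De$ is a point the paper leaves implicit.
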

\begin{proof}

  Suppose that $[\bn] = \a' \nu + \Up'$.  Then, $\Up' - (\Up')^0 = X
  \in \Om^1_\De (\gl_n(\C))$ by Lemma~\ref{res0}.  Since the image of
  $\vs$ is closed under conjugating by $\GL_n(R)$, it suffices to show
  that there exists $g \in \GL_n(R)$ such that $\Up:=\Ad (g) \Up' -
  (\dy g) g^{-1} = \Ad (g) (\Up')^0$.

  The system of differential equations $g^{-1} (\dy g) = X$ has a
  solution if $\dy X + X \wedge X= 0$.  To see this, recall that $\dy
  X = \dy \Up'|_{z_0 = 0}$.  By flatness, $\dy \Up' + \Up' \wedge \Up'
  = 0$, so $\dy X + X \wedge X = (\dy \Up' + \Up' \wedge \Up' )|_{z_0
    = 0} = 0$.
\end{proof}

Now, suppose that $(\bfg,\bV, \tn)$ is a deformation.  Choose $p_i \in
P_\De^1$ such that $p_i \cdot \a \nu = \tA_{i \nu} \nu$, and write
$\hg_i = p_i g_i$.  Let $\Up_i = \Ad(\hg_i^{-1}) A_{i, \De}+
\Om^1_{\De} (\fg_i)$, with corresponding global section $\Up^0\in
\Omega^1_{\De}(H^0(\proj;\sgr))$.
\begin{theorem}\label{defeq}
A good framed deformation $( \bfg, \bV, \tn)$ is integrable if and only if it is $\GL_n(R)$-gauge-equivalent to a deformation satisfying the following equations:
\begin{enumerate}
\item\label{flow} 
$\phi_i (\Ad (p^{-1}_i) (A_{i,\De}) +  (d_\De g_i) g_i^{-1}) \in \phi_i (\Ad(g_i)\Up^0) + \Om^1_{\De} (\fu_i)$;
\item\label{curv} $d_\De \a \in 
\tau_\nu \Up^0 + [\a, \Up^0 ] + \Om^1_{\De} (\fg_i \frac{dz_i}{z_i
  \nu});$ and
\item\label{dy}  $d \Up^0 + \Up^0 \wedge \Up^0=0$.
\end{enumerate}
\end{theorem}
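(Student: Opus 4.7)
My plan is to put $\bn$ into the canonical form $[\bn] = \a\nu + \Up^0$ provided by Lemma~\ref{Omzero} and then translate the two flatness equations \eqref{flatness} into the three stated conditions in both directions. In this normal form, $d_\De \Up^0 + \Up^0 \wedge \Up^0 = 0$ is exactly condition~(3), so that part is immediate in both directions.

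Condition~(2) comes from the first flatness equation, $d_\De\a - \tau_\nu\Up^0 - [\a, \Up^0] = 0$. The forward direction is an immediate reduction at each $x_i$ modulo $\fg_i\frac{dz_i}{z_i\nu}$. For the reverse direction, I would analyze the residual
\[ E := d_\De\a - \tau_\nu\Up^0 - [\a,\Up^0], \]
which, by (2), lies in $\Om^1_\De(\fg_i\frac{dz_i}{z_i\nu})$ at each singular point. Viewing $E$ as a $\gl_n$-valued $\Om^1_\De$-form on $\proj$ with meromorphic coefficients, (2) says its principal parts at each $x_i$ vanish; combined with Lemma~\ref{res0}, which gives $\Up^0|_{z_0=0}=0$, a Liouville-type argument on $\proj$ should force $E \equiv 0$.

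Condition~(1) comes from the good-framing relation. By Proposition~\ref{good} there exists $p_i \in \PD^1$ with $p_ig_i \cdot [\bn] - \bd(p_ig_i)(p_ig_i)^{-1} = \tA_{i,\nu}\nu + A_{i,\De}$; extracting the $\Om^1_\De$ component yields equation~\eqref{Omod},
\[ \Ad(g_i)\Up^0 \in \Ad(p_i^{-1})A_{i,\De} + (d_\De g_i)g_i^{-1} + \Om^1_\De(\fP_i^1). \]
Applying the constant-term map $\phi_i$, which carries $\fP_i^1$ onto $\fu_i$, produces (1) in the forward direction. Conversely, (1) is precisely the mod-$\fu_i$ consistency condition that allows one to construct a $p_i \in \PD^1$ realizing the good-framing identity, matching the setup of Proposition~\ref{good}.

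The main obstacle I expect is the reverse direction for (2): showing that conditions holding only modulo $\fg_i$ at each singular point, combined with (1) and the global structure $\Up^0 \in H^0(\proj;\sgr)$, imply the global first flatness equation. This will require careful tracking of principal parts at each $x_i$, the vanishing $\Up^0|_{z_0=0}=0$, and the role of (1) in supplying the correct constant-term data so that $E$ vanishes identically on $\proj\times\De$.
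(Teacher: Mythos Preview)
Your forward direction is essentially the paper's: normalize via Lemma~\ref{Omzero} so that $\Up=\Up^0$, read off conditions~(2) and~(3) from the flatness equations~\eqref{flatness}, and obtain~(1) by applying $\Ad(g_i)$ and $\phi_i$ to the good-framing relation~\eqref{Omod}.

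The converse, however, has a gap in your proposed mechanism for~(2), and you are reaching for ingredients that are neither needed nor sufficient. A Liouville argument on $E = d_\De\a - \tau_\nu\Up^0 - [\a,\Up^0]$ viewed as a \emph{function} on $\proj$ runs into two problems: first, $\tau_\nu$ acquires poles at the zeros of $\nu$, so $E$ is not obviously globally holomorphic as a function; second, even if $E$ were constant in $z$, the evaluation $\Up^0|_{z_0=0}=0$ does not give $E|_{z_0=0}=0$, since $E$ also contains $d_\De\a$. Condition~(1) does not supply the missing constant-term data either. The paper's argument is both simpler and correct: the curvature cross-term of $\a\nu+\Up^0$ is naturally a $\gl_n$-valued \emph{one-form} on $\proj$ (with $\Om^1_\De$-coefficients). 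Writing it as $\sum_j f_j\,dy_j$ with $\{dy_j\}$ a basis of $\Om^1_\De$, condition~(2) forces each $f_j$ to extend holomorphically across every $x_i$, whence $f_j\in H^0(\proj;\Om^1_\proj)\otimes\gl_n=0$. No point evaluation and no input from~(1) is required for this step; in the paper, condition~(1) in the converse direction merely records that $\Up^0$ satisfies~\eqref{Omod} and is not used in the flatness argument.
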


\begin{proof}
  Suppose that $(\bfg, \bV, \tn)$ is integrable.  By Lemma
  \ref{Omzero}, there exists a gauge $g\in\GL_n(R)$ such that
  $[\bn]_{g} = \a \nu + \Up$ with $\Up = \Up^0$.  
  By Proposition \ref{totalconnection},
\begin{equation}\label{omcondition}
\Up \in \Ad (\hg_i^{-1}) A_{i, \Delta} + \hg_i^{-1} d_\De \hg_i + (\fP^1_{i, \De})^{g_i}.
\end{equation}
Therefore, condition \eqref{flow} is satisfied by applying $\Ad(g_i)$
and $\phi_i$.  Moreover, $\a \nu + \Up$ is flat, so conditions
\eqref{curv} and \eqref{dy} follow from \eqref{flatness}.


To see the converse, note that condition~\eqref{flow} shows that
$\Up^0$ satisfies \eqref{omcondition}.
Condition~\eqref{curv} implies that the cross-term of the curvature of
$\alpha \nu + \Up^0$ is zero modulo $\fg^1_i \frac{dz_i}{z_i
  \nu}$.  Writing this term as $\sum f_j dy_j$, where the $dy_j$'s are
a basis for $\Om^1_\De$, we see that $f_j\in H^0 (\proj;
\Om^1_{\proj}) = \{0\}$.  Since the term in $\Om^2_\De$ vanishes by
condition~\eqref{dy}, we see that the curvature vanishes. 

\end{proof}
\begin{corollary}\label{eqdfq}
A good framed deformation $(\bfg, \bV, \tn)$ is integrable if and only if there exists $g \in \GL_n (R)$
such that 
\begin{enumerate}
\item\label{eqflow} $\phi_i(\Ad(g_i)(g^{-1} \dy g  + \Up^0)) 
\in  \phi_i ( \Ad (p_i^{-1}) A_{\De, i} + (\dy g_i) g_i^{-1}) +\Om^1_{\De} (\fu_i)$;
\item\label{eqcurve} $\dy \alpha \in   \tau_\nu \Up^0   + [\a, \Up^0 + g^{-1} \dy g] +
\Om^1_{\De} (\fg^1_{i} \frac{dz_i}{z_i \nu});$ and
\item\label{eqdO}  $\dy (\Up^0 +g^{-1} \dy g) + 
(\Up^0 + g^{-1} \dy g)\wedge (\Up^0 + g^{-1} \dy g) = 0$.
\end{enumerate}
\end{corollary}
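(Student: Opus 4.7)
The plan is to derive Corollary~\ref{eqdfq} as a direct reformulation of Theorem~\ref{defeq}, making explicit the $\GL_n(R)$-gauge supplied by Lemma~\ref{Omzero}. Theorem~\ref{defeq} treats a deformation whose $\De$-part has already been normalized to $\Up^0$; the corollary allows an arbitrary $\De$-part of the shape $\Up^0+g^{-1}\dy g$ for some $g\in\GL_n(R)$, which is precisely what the gauge $g$ would renormalize back to $\Up^0$. So the existence of $g$ satisfying \eqref{eqflow}--\eqref{eqdO} is equivalent, under this gauge, to the canonical deformation of the theorem satisfying (1)--(3).

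I will verify each of the three corollary conditions against the corresponding condition of Theorem~\ref{defeq}. Condition \eqref{eqdO} is simply the pure-$\De$ flatness equation $\dy\Up+\Up\wedge\Up=0$ of \eqref{flatness} evaluated at $\Up=\Up^0+g^{-1}\dy g$, which gives precisely condition (3) of the theorem after the gauge change. Condition \eqref{eqcurve} comes from the cross-curvature equation $\tau_\nu\Up=\dy\a+[\Up,\a]$ with the same substitution: since $g$ is holomorphic at every $x_i$, $\tau_\nu(g^{-1}\dy g)$ vanishes locally, and the bracket $[g^{-1}\dy g,\a]$ contributes only to $\Om^1_\De(\fg_i^1\frac{dz_i}{z_i\nu})$, yielding the required congruence. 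Condition \eqref{eqflow} is the framing-compatibility statement (1) of the theorem translated under the gauge $g$: the framing $g_i$ and the local gauge $p_i$ transform together, and the contribution from $-(\dy g)g^{-1}$ that is produced when $g$ acts on the $\De$-part inside $\Ad(g_i)$ is precisely the additive correction $g^{-1}\dy g$ visible in \eqref{eqflow}.

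With this dictionary, the biconditional is formal. Given integrability, Theorem~\ref{defeq} furnishes $g\in\GL_n(R)$ such that $g\cdot[\bn]=\a\nu+\Up^0$ satisfies (1)--(3); the translation above yields \eqref{eqflow}--\eqref{eqdO} for the same $g$. Conversely, if \eqref{eqflow}--\eqref{eqdO} hold for some $g$, then the gauged connection $g\cdot[\bn]$ satisfies the hypotheses of Theorem~\ref{defeq}, so $\bn$ is flat and hence the deformation is integrable.

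The main technical obstacle will be condition \eqref{eqflow}: careful bookkeeping is required to track how the local gauges $p_i$ (which depend on the normalized formal type), the global framings $g_i$, and the global gauge $g$ interact with the constant-term extraction $\phi_i$ modulo $\Om^1_\De(\fu_i)$. The other two conditions reduce to routine gauge rewrites of the flatness equations.
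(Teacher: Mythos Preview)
Your overall strategy---pass through the gauge $g\in\GL_n(R)$ supplied by Lemma~\ref{Omzero} and translate each condition of Theorem~\ref{defeq} via the substitutions $\alpha'=\Ad(g)\alpha$, $g_i'=g_ig^{-1}$, $(\Up')^0=\Ad(g)\Up^0$---is exactly what the paper does, and the bookkeeping you flag for condition~\eqref{eqflow} (tracking $(\dy g_i')(g_i')^{-1}=(\dy g_i)g_i^{-1}-\Ad(g_i)(g^{-1}\dy g)$) is indeed the only place requiring care.

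There is one slip in your treatment of condition~\eqref{eqcurve}. You assert that $[g^{-1}\dy g,\a]$ lies in $\Om^1_{\De}(\fg_i^1\frac{dz_i}{z_i\nu})$; this is false, since $g^{-1}\dy g\in\Om^1_\De(\gl_n(\C))$ is constant in $z$ but $\a$ has genuine poles at each $x_i$, so the bracket inherits those poles. More importantly, the claim is unnecessary: substituting $\Up=\Up^0+g^{-1}\dy g$ into the cross-curvature equation $\tau_\nu\Up=\dy\a+[\Up,\a]$ and using $\tau_\nu(g^{-1}\dy g)=0$ gives exactly $\dy\a=\tau_\nu\Up^0+[\a,\Up^0+g^{-1}\dy g]$, with the bracket term $[\a,g^{-1}\dy g]$ \emph{retained} on the right-hand side---it is precisely the term that distinguishes condition~\eqref{eqcurve} of the corollary from condition~(2) of the theorem, not something to be absorbed. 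Equivalently, and this is how the paper phrases it, apply condition~(2) of the theorem to the gauged data and then conjugate back by $g^{-1}$; the extra bracket comes from $\dy(\Ad(g)\a)=\Ad(g)\dy\a+[(\dy g)g^{-1},\Ad(g)\a]$. With this correction your argument goes through and matches the paper's.
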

\begin{proof} Set $[\bn]=\alpha\nu + \Up$ and choose $g\in\GL_n(R)$
  such that $[\bn]_g=\alpha'\nu + \Up'$ with $\Up'=(\Up')^0$.  The
  proof above shows that $(\bfg',\alpha',\Up')$ satisfy the conditions
  in the theorem.
  One obtains the equations in the corollary by substituting
  $\Up'=\Ad(g)\Up^0$, $\alpha'=\Ad(g)(\alpha)$, and $g'_i=g_ig^{-1}$.
The converse is proved similarly.
\end{proof}
\begin{rmk}  It will be shown later in Theorem~\ref{redundant} that
  the third condition in these two results is unnecessary.
\end{rmk}




\section{The Differential Ideal}\label{integrability}
In this section and the next, we show that the equations from
Corollary \ref{eqdfq} determine a Frobenius integrable system on
$\tM(\bfx, \bfP, \bfr)$.  Throughout, we will fix a global meromorphic
one-form $\nu$ on $\proj(\C)$.  We will also use the convention that
if $B$ is a vector bundle on a manifold $M$ and $N\subset M$ is open,
then we will write $\Om^1_N(B)$ for $\Om^1_N(B|_N)$.  Furthermore, if
$\sigma$ is a global section of $\Om^1_M(B)$, we will abuse notation
and let $\sigma$ also denote $\sigma|_N$.  
\subsection{Generators}
We will construct a differential ideal $\I$ on $\tMg(\bfx,
\bfP, \bfr)$ corresponding to the system of differential equations in
Corollary \ref{eqdfq}.  Throughout, we will simplify notation by
suppressing $\bfx$, $\bfP$, and $\bfr$ in the notation and using $d$
for the exterior differential on all spaces whenever there is no risk
of ambiguity.  We will also write $\tM_i = \tM(P_i, r_i)$ and $\A_i =
\A(P_i, r_i)$.

Set $\ft^j_i = \ft_i \cap \fP_i^j$ and $T_i^j = T_i\cap P_i^j$.
First, we define maps $\hmA_i : \tMg \to \A_i$ and $\hmA_{i \nu} :
\tMg \to (\ft_i^{-r_i} + H_T) \frac{dz_i}{z_i \nu}$: 
$
\hmA_i ( (U_j g_j,\a_j
)_{i \in I}) = \mA_i(U_i g_i,\a_i)
$
and $\hmA_{i \nu} ( (U_j g_j,\a_j))$ is the standard representative of
$\mA_i (U_ig_i,\a_i)$ in $\fP_i^{-r}\frac{dz_i}{z_i \nu}$ with Laurent
expansion $(\g_{-r} \o_i^{-r} + \ldots +\g_{0} \o_i^0 + H_T)
\frac{dz_i}{z_i \nu}$.   We also write $\hmA_i^0$ for the
coefficient of $\o_i^0 \frac{dz_i}{z_i \nu}$ in $ \hmA_{i \nu}$.

We identify the tangent bundle of $\A_i \subset (\ft_i/ \ft_i^{r+1})^\vee$
with $\A_i \times \ft_i^{-r_i} / \ft_i^1$
using the pairing $\langle,
\rangle_{\frac{dz_i}{z_i}}$.  
The differential of the map $\hmA_i :
\tMg \to \A_i$ determines a section $d \hmA_i$ of $\Omega^1_{\tMg}
(\ft_i^{-r_i} / \ft_i^1) \subset \Omega^1_{\tMg} (\fP_i^{-r_i}/\fP_i^1)$.

Identify $\tM_i$ with $\fV_j \times_{U_j} \GL_n(\C)$ as in Lemma~\ref{altdescr}.
Throughout this section, we will
use $m_j = (v_j, g_j) \in \fV_j \times_{U_j} \GL_n(\C)$ to denote a
point in $\tM_i$ and $m = (m_j)_{j \in I}$ to denote a point in $\tMg
\subset \prod_{i \in I} \tM_i$. Let $\psi_i : \tMg \to U_i \backslash \GL_n(\C)$ be the composition of
the projections $ \tMg \to \tM_i$ and $ \tpsi_i : \tM_i \to U_i \backslash
\GL_n(\C)$. 

We are now ready to define a collection of differential forms on
$\tMg$.

\begin{enumerate}
\item 
Define an endomorphism of $\ft_i$ by
\begin{equation*}
\dnu (t_{-j} \o_i^{-j}) = \begin{cases}
\frac{-e}{j} t_{-j} \o_i^{-j} & \text{if  $j \ne 0$;} \\
0 &  \text{if  $j =0$.}
\end{cases}
\end{equation*}
Note that if $t = \sum_{i = -N}^\infty t_i \o_i$, then
$\d_e (\dnu (t)) = t - t_0$.
We obtain an induced map
$\dnu : \Om^*_{\tMg} (\ft_i^{-r_i}/ \ft_i^1) \to \Om^*_{\tMg} (\ft_i^{-r_i}/ \ft_i^1)$.

We shall clarify the dictionary between this notation and the notation
in Section~\ref{4.1}. Set $\dAi = \dnu (d\hmA_i ) \in \Om^1_{\tMg}
(\ft_i^{-r_i} / \ft_i^1)$.  If $\nu = \frac{dz_i}{z_i}$, the first
part of Lemma~\ref{del} implies that
\begin{equation}\label{deleqn}
\tau_\nu \dAi  - [\dAi, \hmA_{i \nu}] = d \hmA_{i \nu} - \dto_i+\ft^1_i.
\end{equation}
Below, we will apply Proposition~\ref{totalconnection} several times
with $\dAi$ and  $\hmA_{i \nu}-\hmA_i^0$ playing the roles of $A_\De$ and
$\tA_\nu$ respectively.

\item
  Recall, from \eqref{regprod}, that $(\fP_i / \fP_i^{r_i+1})^\vee_\reg \cong \A_i
  \times_{T^1_i} P_i^1 / P_i^{r+1} $.   Given $m_i \in \tM_i,$ we may write
  $\pi_{\fP_i} (v_i) = (a_i, p_i) \in \A_i \times_{T^1_i}
  P_i^1/P_i^{r_i+1}$.  Thus, $a_i$ is the image of $v_i$ under the map
  $\mA_i$ from Proposition~\ref{A}, and
  $p_i$ is characterized by $\Ad^*(p_i)  (\pi_{\fP_i} (v_i)) = a_i$.  
  Define a bundle
  $B_i = \fP_i^{-r} / \fP_i^1 \times_{U_i} \GL_n(\C)$ over $U_i
  \backslash \GL_n(\C)$,   where
  $U_i$ acts on $\GL_n(\C)$ by left multiplication and on $\fP_i^{-r}
  / \fP_i^1$ by $\Ad$.  We define $\Phi_i \in \Omega^1_{\tMg}
  (\psi_i^* (B_i))$ by
\begin{equation}\label{Phii}
\Phi_i ( m) = \Ad(p_i^{-1}) (\dAi).
\end{equation}

Note that $\Phi_i$ is a well defined section of $\Omega^1_{\tMg}
  (\psi_i^* (B_i))$:  if $u \in U_i$, then $u(a_i,p_i)=(a_i,p_i u^{-1})$;
  it follows that
$
\Ad(u) \Phi_i (u m) = \Phi_i ( m).
$

\item We define a third form $\Up^0$ on $ \tMg$
which has coefficients in $H^0 (\proj ; \sgr)$.
The form
\begin{equation*}
\bPhi_i = \Ad (g_i^{-1}) \Phi_i +\Om^1_{\tMg} (\fg_i /\fP_i^1)
\in \Om^1_{\tMg} ((\fP_i^{-r_i} )^{g_i}/ \fg_i)
\end{equation*}
is well defined on $\tMg$, so the product
$\bPhi = (\bPhi_i)_{i \in I}$ determines a form in $\Om^1_{\tMg} 
(\prod_{i \in I} ((\fP_i^{-r_i})^{g_i}/ \fg_i))$.  Using Definition~\ref{Om0}, 
we set $\Up^0 = \vs (\bPhi) \in \Om^1_{\tMg} (H^0(\proj; \sgr)) $.
%

\item The next form,  $\Th_i$, is defined on
$\tMg$ with coefficients in $\psi_i^*(T (U_i \backslash \GL_n(\C)))$.
Here, we identify $T(U_i \backslash \GL_n(\C))$ with $\fu_i \backslash \gl_n(\C)   
\times_{U_i} \GL_n(\C)$
in the usual way.  
Define a section of $\Omega^1_{\tMg} ( \psi_i^* T(U_i \backslash \GL_n(\C)))$
by
\begin{equation*}
\Th_i (m) =  
(d g_i) g_i^{-1} + \bphi_i(\Phi_i) - \bphi_i (\Ad(g_i) (\Up^0))+
\Om^1_{\tMg}(\fu_i),
\end{equation*}
 where $\bphi$ is the function defined in  \eqref{bphi}.
The form $\Th_i$ is a well defined.  By construction, $\bphi_i (\Phi_i)$
is a well defined section of $\Omega^1_{\tMg} ( \psi_i^* T(U_i \backslash \GL_n(\C)))$.  Furthermore, $\Ad(u_i)\bphi_i (\Ad(g_i)
\Up^0) (u_i m) = \bphi_i (\Ad(g_i) \Up^0) ( m)$ 
and
\begin{equation*}
  d (u_i g_i) g_i^{-1} u_i^{-1}  \in \Ad(u_i) ((dg_i) g_i^{-1}) + \Om^1_{\tMg}(\fu_i).
\end{equation*}

\item  By Theorem~\ref{modspace}, each point of  $\tMg$
determines a unique form $\a \nu \in H^0 (\proj \backslash \bfx;
\Om^1_{\proj} (\sg))$.
In particular, $d \a$ is a one-form with coefficients in $H^0 (\proj \backslash \bfx;\sg)$.
Note that $d \a$ depends on the choice of global form $\nu$.
\item We define the last collection of forms on $\tMg$ to have coefficients 
in $(\fg_i^{-r'_i}\frac{dz_i}{z_i \nu}) / (\fg_i^1\frac{dz_i}{z_i \nu})$, where $\bfre = (r_i')_{i \in I}$ as in Definition~\ref{spgr}.
Let $\Xi_i$ be defined on $ \tMg$ by
\begin{equation*}
\Xi_i((v_j, g_j)_{j \in I}) = \tau_\nu  \Up^0 - d \a - [\Up^0, \a] + 
\Om^1_{ \tMg}( \fg_i^1 \frac{dz_i}{z_i \nu}).
\end{equation*}
\end{enumerate}
Intuitively, we have a framed global deformation on any neighborhood
of $\tMg$ isomorphic to an analytic polydisk determined by $\bfg =
(g_i)_{i \in I}$ and $\tn = d + \a \nu$ (see
Definition~\ref{deformation}).  Here, $\Up^0$ represents the $dy$ term
of the connection (up to a section of $\Om^1_{\tMg} (\gl_n(\C))$).
The form $\Xi_i$ corresponds to the expansion, in non-positive degrees
of $z_i$, of the $dz \wedge dy$ term of the curvature.  Moreover, the
vanishing of $\Th_i$ gives a relation on the framing $g_i$ analogous
to part~\eqref{flow} of Theorem~\ref{defeq}.


\subsection{The Differential Ideal}\label{theideal}
We are now ready to construct a differential ideal $\I$ on $\tMg$
corresponding to the isomonodromy equations in Section~\ref{four}.
Throughout, we fix sections $\hg_i$ of the trivial
$\GL_n(\fo_i)$-bundle over $\tMg$ such that $\hg_i g_i^{-1} \in P_i^1$
and $\hg_i \cdot \a = \hmA_{i\nu}$.  We write $\hp_i = \hg_i
g_i^{-1}$.  Observe that $\Ad(g_i) (\hg_i^{-1}d\hg_i) \in (d g_i)
g_i^{-1} + \Om^1_{\tMg} (\fP_i^1)$ and $p_i = \hp_i
P_i^{r_i+1}$.

For convenience, we will  take $\dAi' \in
\Om^1_{\tMg}(\ft_i^{-r})$ to be the lift of $\dAi$ which has zero components in
positive degrees.  By Lemma~\ref{del}, this lift has the property 
\begin{equation}\label{curvprop}
 \tau_\nu \dAi' + [\hmA_{i \nu}, \dAi'] = d \hmA_{i \nu} - \dto_i.
\end{equation}
Moreover, $ d \dAi' = 0,$
since $\dAi' = \dnu ( d \hmA_{i \nu})$.  The same holds for $\dAi$.

If $B$ is a vector bundle over $\tMg$, $\sigma\in\Om^*_{\tMg}(B)$ and
$\J$ is a differential ideal on $\tMg$, we will
use the shorthand ``$\sigma\in\J$'' to mean
$\sigma\in\I\otimes_{\O_{\tMg}} B$.  If $B'$ is another bundle
and $f:B\to B'$ is a bundle map, then $\sigma\in\J$ implies
that $f(\sigma)\in\J$.  In particular, this is the case if the
bundles are trivial with $f$ induced by a $\C$-linear map on their
fibers.  We also say that $\J$ is generated by $\s$ if it is locally
generated by the coefficients of $\s$ in a trivialization of $B$.

Throughout, we may assume that $\nu = \frac{dz_i}{z_i}$ when working
with $\Xi_i$ and $\Th_i$.  Let $\chi_i = \phi_i (\Ad(\hg_i^{-1}) \dAi'
+ \hg_i^{-1} d \hg_i - \Up^0) \in \Om^1_{\tMg} (\gl_n(\C)).$ Note that
$\Ad(g_i) (\chi_i) + \Om^1_{\tMg} (\fu_i)$ is a well-defined section
of the bundle $B_i$ from the previous section.  Choose some $j \in I$.
Let $\I$ be the differential ideal on $\tMg$ is generated by $\Xi_i -
[\chi_j, \a]$, $\Th_i - \Ad(g_i) (\chi_j)$, and $\dto_i$ for all $i
\in I$.  We will show late that this definition is independent of the
choice of $j$.

\begin{theorem}\label{int}\mbox{}
\begin{enumerate}\item\label{inta}
  The differential ideal $\I$ is a Pfaffian ideal which satisfies the
  Frobenius integrability condition $d \I \subset \I$ and therefore
  determines a foliation of $\tMg(\bfx, \bfP, \bfr)$.  Moreover, any
  map $f : \De \to \tMg$ corresponds to a framed integrable
  deformation if and only if $f^* \I = 0$.
\item\label{intb} The ideal $\I$ is the pullback of an integrable Pfaffian
  differential ideal $\bI$ on $\tM(\bfP,\bfx,\bfr)$, and any
  map $f : \De \to \tM$ corresponds to a framed integrable
  deformation if and only if $f^* \bI = 0$.
\end{enumerate}
\end{theorem}
We will prove this theorem in the subsequent two sections.  The
remainder of this section is devoted to a local analysis of the ideal
$\I$ which will be useful later on.

Over a sufficiently small neighborhood $N_i \subset U_i \backslash
\GL_n(\C)$, we may choose a smooth algebraic slice of coset
representatives $\sigma_i : N_i \to \GL_n(\C) $.  Let $\tN_i \subset
\GL_n(\C)$ be the inverse image of $N_i$, and let $N = \bigcap_{i \in
  I} \psi_i^{-1} (N_i) \subset \tMg$.

For brevity, we will write $g_i^\s = \s_i (U_i g_i)$.  When $\chi \in
\gl_n(\C) = T_{g_i^\s} \GL_n(\C)$, we write $\chi^\s \in \gl_n(\C)$
for the image of $\chi$ under the tangent map corresponding to the
composition $\tN_i \to N_i \xrightarrow{\s_i} \tN_i$.  We also define
$\Th_i^\s \in \Om^1_{N} (\gl_n(\C))$ by 
\begin{equation}\label{Thi}
\Th_i^\s  =  
 (g_i^\s)^{-1}d g_i^\s + T\s_i (\bphi_i(\Phi_i) - \bphi_i (\Ad(g_i) (\Up^0))).
\end{equation}
Observe that $\Ad(g^\s_i) (\Th_i^\s) + \Om^1_{N} (\fu_i) =
\Th_i|_{N}$; indeed, $\Th_i^\s = T \s_i (\Th_i)$.

We define a map $\xi_i : \fg_i \to \Om^1_{N}$ using the residue-trace
pairing in \eqref{pair}:
\begin{equation}\label{xii}
\xi_i (X) = \langle \Ad(g_i^\s)(\Xi_i -[\Th_i^\s, \a]), X \rangle_\nu.
\end{equation}

\begin{lemma}\label{image}
  Let $\J$ be a differential ideal on $N \subset \tMg$, and let
  $\Xi$ be a section of  $\Om^1_{N} (\fP^{-R} / \fg^1)$ for some integer $R> 0$.  
  Define $\xi \in \Om^1_{N}(\fg^\vee)$ by 
  $\xi (X) = \langle \Xi, X \rangle_\nu$.
  Then $\xi  (\fg) \subset \J$ if and only if $\Xi \in \J$.  Furthermore, $\xi (\fP^s) \in \J$
  if and only if $\Xi \in \J + \Om^1_N (\fP^{-s+1})$
\end{lemma}
\begin{proof}
  By assumption, $\fP^{R+1} \subset \ker (\xi)$, so
  $\xi$ induces an element $\bar{\xi} \in \Hom_\C(\fg/ \fP^{R+1},
  \Om^1_{N})\cong \Om^1_N((\fg / \fP^{R+1})^\vee)$.  Note that
  $\xi (\fg) \subset \J$ if and only $\bar{\xi}\in\J$.
  Moreover, $\bar{\xi}$ and $\Xi$ correspond under
  the isomorphism $(\fg / \fP^{R+1})^\vee\cong
  \fP^{-R}/\fg^1$ given by the perfect pairing $\langle,
  \rangle_\nu$, so $\bar{\xi} \in\J$ if and only if
  $\Xi \in\J$.  The first result now follows. A similar argument, replacing
  $\fg$ with $\fP^{s}$,  proves the second statement.
\end{proof}

We will write $\hr$ for the action of $\GL_n(\C)$ on $\prod_{i \in I}
\tM_i$.  (This is the product over the action of $\GL_n(\C)$ on each
factor $\tM(P,r)$ from Lemma~\ref{faction}.)  Define $\TGMg \subset
\prod_{i \in I} \TMi$ to be the subbundle of $\T(\prod_{i \in I}
\tM_i)$ consisting of vectors tangent to the $\GL_n(\C)$-orbits.
Since the action of $\GL_n(\C)$ is free, this is an integrable
subbundle.  Moreover, the action map of $\hr$ gives an isomorphism
between the trivial $\gl_n(\C)$-bundle on $\prod_{i \in I} \tM_i$ and
$\TGMg$.

We may identify the preimage $\tpsi_i^{-1}(N_i)$ of $N_i$ in $\tM_i$ with
$\fV_i \times N_i$ via
\begin{equation*}
(v, g_i) \mapsto (\Ad(g_i^\s g_i^{-1}) v, U_i g_i).
\end{equation*}
Thus, there is  a local isomorphism 
\begin{equation}\label{localiso}
  \T(\tpsi^{-1}_i(N_i))\cong \T(\fV_i)\times \T(N_i). 
\end{equation}
This equation induces a local bundle map $\piU : \TGMg|_N \to
\T(\fV_i) \subset T(\prod_{i \in I} \tpsi^{-1}(N_i))|_N$.  We also
have a global bundle map $\piL : \TGMg|_N\to \psi_i^{-1}T(U_i
\backslash \GL_n(\C))$.


Recall from Lemma~\ref{altdescr} that 
$\fV_i$ is open dense in $Z_i = (\fg_i / \tW_{r_i})^\vee$.
 Therefore, $T(\fV_i) \cong \fV_i \times Z_i$.
\begin{lemma}\label{TGM} 
Let $\chi$ be a section of $\Om^1_N (\TGMg)$.   Then,
$\piL (\chi) = -\Ad(g_i) (\chi) + \fu_i \in \Om^1_N (B_i)$.
Moreover, $\piU (\chi) \in \Om^1_N (Z_i^\vee)$ is the functional defined by
$\Ad(g_i^\s) \left( [\chi-\chi^\s, \a] \right)$.
\end{lemma}
\begin{proof}
  The action of $\GL_n(\C)$ on $U_i \backslash \GL_n(\C)$ is the usual
  (left) action, $h (U_i g) = U_i g h^{-1}$.  The first statement
  follows.  The map $\tN_i \to U_i$ defined by $g \mapsto g
  (g^\s)^{-1}$ has tangent map $X \mapsto \Ad(g^\s) (X - X^\s)$.
  Since the projection from the $\tpsi_i(N_i)\subset\tM_i$ onto
  $\fV_i$ has the form $(v_i, g_i) \mapsto \Ad(g_i^\s g_i^{-1})
  (v_i)$, it follows that $\piU (\chi) = [\Ad(g_i^\s) ( \chi-\chi^\s),
  \Ad(g_i^\s)(\a)])$.
\end{proof}

Choose a vector space lift $\hZ_i \subset \fg_i$ of $\fg_i /
\tW_{r_i}$.  Define $\Xi_i^\fV \in (\hZ_i)^\vee \cong Z_i$ by $
\Xi_i^\fV= \xi_i |_{ \hZ_i}, $ using the notation in \eqref{xii}.  In
other words, $\Xi_i^\fV$ is the functional on $\hZ_i$ determined by
$\Ad(g_i^\s) (\Xi_i - [\Th_i^\s, \a])$.  The $I$-tuple $\tk =
(-\Xi_i^\fV, \Th_i)_{i \in I}$ determines a section of $\Om^1_{N}
(\prod_{i \in I}\TMi)$.  We define $\ka$ to be the differential form
on $\Om^1_{N} ((\prod_{i \in I} \TMi)/ \TGMg)$ determined by the image
of $\tk$.

\begin{definition}
  We define $\I^\s$ to be the differential ideal on $N$ determined by
  the coefficients of $\ka$ and by $\dto_i$ for each $i \in I$.
\end{definition}
We wish to show that $\I^\s$ is the restriction of $\I$ to $N$. 


\begin{lemma}\label{dfqlem}
  Suppose that $\J$ is a differential ideal on $N$ containing
  $\dto_i$ and $\chi$ is a section of $\Om^1_N(\TGMg)$.  If
\begin{equation*}
\Ad(g_i)(\Up^0) + \Ad(g_i)(\chi) -   (d g_i) g_i^{-1}  \in \Phi_i  + \J+ \Om^1_{N} (\fP^1_i) ,
\end{equation*}
then $\Ad(g_i) (\Xi_i -[\chi,\a] )
\in \Om^1_{N} ([\fP^1_i, \Ad(g_i) (\a)] + \fP^1_i\frac{dz_i}{z_i \nu}) +\J.$
\end{lemma}
\begin{proof}
  Without loss of generality, let $ \nu = \frac{dz_i}{z_i}$.  By construction,
  $\tau_\nu (\dAi') - d \hmA_{i \nu} + \dto_i+ [\hmA_{i \nu},
  \dAi'] = 0$.  Write $\a' = \hg_i^{-1} \cdot (\hmA_{i \nu} -
  \hmA_i^0) $, so $\a - \a' = \Ad(\hg_i^{-1}) (\hmA_i^0)$.  Set $\Upsilon =
  \Ad(\hg_i^{-1}) \dAi' + \hg_i^{-1} d \hg_i$.

 By  Proposition \ref{totalconnection}, $\hg_i^{-1} \cdot \left( (\hmA_{i \nu} -
  \hmA_i^0) \nu + \dAi' \right) + \hg_i^{-1} d \hg_i = \a' \nu + \Upsilon$
  determines a flat connection on $\Spec(F_i) \times N$.  Therefore,
 $\tau_\nu (\Upsilon) - 
d \a' + [\a', \Upsilon]  = 0.$
 It follows that
 \begin{equation*}
\tau_\nu(\Upsilon) - d \a + [\a, \Upsilon] = -\Ad(\hg_i^{-1}) \dto_i \in  \J.
 \end{equation*}
 By definition, $\Ad(\hg_i^{-1}) \dAi + \Om^1_{N}
 ((\fP_i^1)^{g_i})= \Ad(g_i^{-1}) \Phi_i$ and $\hg_i^{-1} d\hg_i \in
 g_i^{-1} d g_i + \Om^1_{N}((\fP_i^1)^{g_i})$. Therefore,
 subtracting the above expression from $\Xi_i- [\chi, \a]$ and using the
 hypothesis yields
\begin{multline*}
\Xi_i - [\chi, \a] \in  \tau_\nu (\Up^0 + \chi - \Ad(\hg_i^{-1} )\dAi - \hg_i^{-1} d \hg_i ) -
\\
 [\Up^0 + \chi  - \Ad(\hg_i^{-1} )\dAi - \hg_i^{-1} d \hg_i ,\a] 
+
 \Om^1_{N}( (\fP^1_i)^{g_i}) + \J.
 \end{multline*}    
 The lemma follows after applying $\Ad(g_i)$ to both sides of the equation above.
\end{proof} 

\begin{lemma}\label{modIde1}
Let $\J$ be a differential ideal on $N$ satisfying the following property:
there exists a section $\chi$ of $\Om^1_N(\TGMg)$ such that
$\Th_i +\piL(\chi) \in \J$ for all $i$.  Then,
\begin{equation*}\label{ceqlem}
\Ad(g_i) (\Up^0 + \chi) \in  \Ad(\hp_i^{-1}) \dAi' + (dg_i) g_i^{-1}
+  \Om^1_{\tMg} (\fP_i^1)+ \J.
\end{equation*}
\end{lemma}
\begin{proof}
We note that $\Up^0 -
  \Ad(\hg_i^{-1}) (\dAi') \in \Om^1_{\tMg} (\fg_i)$ by construction.
Since the map $\bphi_i$ induces an isomorphism
$\fP_i^1 \backslash \fg_i \cong \fu_i \backslash \gl_n(\C)$,
it suffices to show 
\begin{equation*}
\bphi_i (\Ad(g_i) (\Up^0 + \chi) - \Ad(\hp_i^{-1}) (\dAi') - (dg_i) g_i^{-1} ) \in \J.
\end{equation*}
This is equal to $-\Th_i + \Ad(g_i) (\chi) + \Om^1_N (\fu_i)$ as a
section of $\Om^1_{N} (\psi_i^* T (U_i \backslash \GL_n(\C)))$.  By
Lemma~\ref{TGM},
\begin{equation*}
-\Th_i + \Ad(g_i) (\chi) + \Om^1_N (\fu_i)= - \Th_i - \piL (\chi)  \in \J.
\end{equation*}
It follows that $\Ad(g_i) (\Up^0 + \chi) \in 
 \Ad(\hp_i^{-1}) (\dAi') + (dg_i) g_i^{-1}  + \Om^1_{\tMg} (\fP_i^1) + \J $.

\end{proof}
\begin{proposition}\label{indc}  On a sufficiently small neighborhood
  $N$, there exists a section $\chi$ of $\Om^1_N(\TGMg)$ such that
  $\tk+\chi\in\I^\s$.  Any such $\chi$ will satisfy the conditions of
  Lemma~\ref{modIde1} for $\J = \I^\s$; furthermore, $\I^\s$ is
  generated by $\Xi_i - [\chi, \a]$, $\Th_i + \piL (\chi)$, and
  $\dto_i$.  In particular, $\I^\s$ is independent of the choice of
  lift $\hZ_i$.


\end{proposition}

For the next few results, we will assume that $N$ is taken to be
sufficiently small as in the statement of the lemma.  However, we
shall see below in Corollary~\ref{chicorollary} that we may choose
$\chi$ to be the restriction of the globally defined $\chi_j$.
\begin{proof}
Shrinking $N$ if necessary, we may choose a direct sum decomposition
$\prod_{i \in I} \TMi)|_{N} \cong \TGMg\vert_{N} \oplus \TGMg^\perp$.
Here,  $\TGMg^\perp \cong ((\prod_{i \in I} \TMi)/ \TGMg)|_{N}$.
Thus, we may choose a section
$\chi$ of $\Om^1_N (\TGMg)$ such that $\tk +\chi \in  \I^\s$.  In particular,
$\Th_i +\piL(\chi) \in \I^\s$ and $-\Xi_i^\fV + \piU (\chi) \in \I^\s$.  
By Lemma~\ref{modIde1}, 
\begin{equation*}
\Ad(g_i) (\Up^0) + \Ad(g_i) (\chi) - (d g_i) g_i^{-1} \in 
\Ad(\hp_i^{-1}) (\dAi') + \I^\s + \Om^1_{\tMg} (\fP_i^1).
\end{equation*}
The right hand side is equal to $\Phi_i + \I^\s + \Om^1_{\tMg}
(\fP_i^1)$.  We may conclude, using Lemma~\ref{dfqlem}, that $\Ad(g_i)
(\Xi_i - [\chi, \a]) \in \Om^1_N(\fP_i^{-r_i+1}) + \I^\s$.

 Consider the identity
\begin{equation}\label{eqn5.7}
\Ad(g_i^\s) (-\Xi_i + [\Th_i^\s, \a]  +[\chi - \chi^\s, \a])=
\Ad(g_i^\s) (-\Xi_i + [\chi, \a]) + \Ad(g_i^\s) ([\Th_i^\s - \chi^\s, \a]).
\end{equation}
Note that $\Th_i^\s - \chi^\s \in \Om^1_N (\gl_n(\C))$ lies in
$\I^\s$, since it is the image of $\Th_i + \piL (\chi)$ under the
tangent map of $\s_i$.  We obtain that the right hand side of
\eqref{eqn5.7} lies in $\Om^1_N(\fP_i^{-r_i+1}) + \I^\s$ since
$\Th_i^\s - \chi^\s \in \Om^1_N (\gl_n(\C))$ is the image of $\Th_i +
\piL (\chi) \in \I^\s $ under the tangent map of $\s_i$.  The left
hand side determines a functional $\xi_i' \in \Om^1_{N} (\fg_i^\vee)$
using the residue-trace pairing. Equation~\eqref{image} and the second
part of Lemma~\ref{TGM} show that $\xi_i'$ is equal to $-\xi_i + \piU
(\chi)$.  It follows that $(-\xi_i + \piU (\chi)) ( \fP_i^{r_i})
\subset \I^\s$.  Since $\tW_{r_i} \subset \fP_i^{r_i}$, we see that
$\hZ_i + \fP_i^{r_i} = \fg_i$.  Thus, $(-\xi_i + \piU (\chi)) (\fg_i)
\subset \I^\s$.  Lemma~\ref{image} implies that both sides of
\eqref{eqn5.7} lie in $\I^\s$.  In particular, $\Xi_i - [\chi,
\a]\in\I^\s$.

Finally, let $\I'$ be the ideal generated by $\Xi_i - [\chi, \a]$,
$\Th_i + \piL (\chi)$, and $\dto_i$.  Since $-\Xi_i^\fV + \piU (\chi)$
is the restriction of the difference of the first two generators, it
lies in the ideal $\I'$.  Therefore, $\kappa' = \tk + \chi$ lies in
$\I'$.  The image of $\kappa'$ in $\Om^1_N ((\prod_i (\TMi)) / \TGMg)$
is $\kappa$, so $\kappa \in \I'$.  It follows that $\I^\s = \I'$.

\end{proof}
We will now show that $\I^\s$ is the restriction of $\I$ to $N$.

\begin{lemma}\label{ideal} Let $\J$ be a differential ideal on $N$.
Suppose that $X \in \Om^1_{\tMg}(\fP^1_i)$ satisfies
\begin{equation}\label{taux}
\tau_\nu (X) - [X, \hmA_{i \nu}] \in \J.
\end{equation}
Then, $X \in \J$.
\end{lemma}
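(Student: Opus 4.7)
The plan is to show by induction on $\ell \geq 1$ that $X \in \I_{\GL_n} + \Om^1_{\tMg}(\fP^\ell_i)$; the base case $\ell = 1$ is the hypothesis, and iterating together with $\bigcap_\ell \fP^\ell_i = 0$ yields $X \in \I_{\GL_n}$, since each coefficient of $X$ is determined by its finite truncations. The engine of the induction is the observation that the operator $L = \tau_\nu - \ad(\hmA_{i\nu})$ is invertible on the graded pieces of $\fP_i^\bullet$ via the regularity of the stratum: part~\eqref{cores5} of Proposition~\ref{cores} handles the $W$-components, while Lemma~\ref{del} handles the toral components.

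For the inductive step, write $X = X' + Z$ with $X' \in \I_{\GL_n}$ and $Z \in \Om^1_{\tMg}(\fP^\ell_i)$. Since $\tau_\nu$ and $\ad(\hmA_{i\nu})$ act on the coefficient space $\fP_i$ without differentiating on $\tMg$, one has $LX' \in \I_{\GL_n}$, and hence $LZ \in \I_{\GL_n}$. Decompose the leading graded piece of $Z$ as $\bar Z = \bar Z^W + \bar Z^\ft$ using the splitting $\fP^\ell_i/\fP^{\ell+1}_i \cong W_\ell \oplus (\ft_i \cap \fP^\ell_i)/(\ft_i \cap \fP^{\ell+1}_i)$ afforded by Proposition~\ref{cores}.

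Assume first that $r_i \geq 1$. For the $W$-part, reduce $LZ$ modulo $\fP^{\ell - r_i + 1}_i$: the derivation term $\tau_\nu Z \in \fP^\ell_i$ vanishes in this quotient, and $[\bar Z^\ft, \hmA_{i\nu}] = 0$, leaving $-[\bar Z^W, \hmA_{i\nu}] \in \I_{\GL_n}$; inverting the isomorphism $\ad(\hmA_{i\nu}) \colon W_\ell \to W_{\ell - r_i}$ from part~\eqref{cores5} of Proposition~\ref{cores} gives $\bar Z^W \in \I_{\GL_n} \pmod{\fP^{\ell+1}_i}$. After subtracting a representative, we may assume $\bar Z^W = 0$, so $Z \in \Om^1_{\tMg}(\ft_i \cap \fP^\ell_i + \fP^{\ell+1}_i)$. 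The bracket $[Z, \hmA_{i\nu}]$ now lies in $\Om^1_{\tMg}(\fP^{\ell + 1 - r_i}_i)$ with its level-$\ell$ graded piece contained in $W_\ell$ (since the image of $\ad(\hmA_{i\nu})$ is in $W$), so the $\ft$-projection of $LZ$ at level $\ell$ equals $\tau_\nu \bar Z^\ft$; by Lemma~\ref{del}, $\tau_\nu$ acts on $\tfl \cdot \o_i^\ell$ as multiplication by the nonzero scalar $\ell/e_i$, so $\bar Z^\ft \in \I_{\GL_n} \pmod{\fP^{\ell+1}_i}$, completing the inductive step. The case $r_i = 0$ is simpler: $\hmA_{i\nu} \in \fg_i$, so $L$ preserves each $\fP^\ell_i$, and the condition that the eigenvalues of the leading term are distinct modulo $\Z$ makes $L$ invertible on each graded piece $\fP^\ell_i/\fP^{\ell+1}_i$ for $\ell \geq 1$. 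The main obstacle is the bookkeeping: keeping track of which projection of $LZ$ isolates which graded piece of $Z$, and ensuring that the quotient-level conclusions can be lifted back to representatives in the ambient ideal.
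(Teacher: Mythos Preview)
Your argument is correct and follows the paper's strategy: induct on the filtration level, splitting into the $W_\ell$-component (handled by part~\eqref{cores5} of Proposition~\ref{cores}) and the toral component (handled via Lemma~\ref{del}). The paper organizes the two halves separately---first showing $\pti(X)\in\I_{\GL_n}$ by a full induction on $\ell$, then using that $\d_e$ is an automorphism of $\ft_i^1$ to dispatch $\piti(X)$ in one stroke---whereas you interleave them level by level; both orderings work.

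One point of phrasing needs tightening in your $\ft$-step. The assertion that ``the image of $\ad(\hmA_{i\nu})$ is in $W$'' is not literally true, since $\hmA_{i\nu}=A_\nu+H_T$ and $\ad(H_T)$ does not land in $\ker\piti$ on all of $\fP_i$; likewise the level-$\ell$ piece of $\piti(LZ)$ is $\d_e(\bar Z^\ft)$, not $\tau_\nu\bar Z^\ft$ (recall $\tau_\nu\o_i^\ell\notin\ft_i$ in general). What you actually need, and what holds, is that $\piti([Z,\hmA_{i\nu}])\in\ft_i^{\ell+1}$: indeed $\piti\circ\ad(A_\nu)=0$ by invariance of the trace pairing and $A_\nu\in\ft_i$, while $[Z',H_T]\in\fP_i^{\ell+1}$ and $\piti([t,H_T])=0$ for $t\in\ft_i$ because $\Tr(y[t,H_T])=\Tr(H_T[y,t])=0$ for all $y\in\ft_i$. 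With this in hand, $\piti(LZ)\equiv\piti(\tau_\nu t)=\d_e(t)=(\ell/e_i)\,t\pmod{\ft_i^{\ell+1}}$, which is exactly the invertibility you invoke from Lemma~\ref{del}; the conclusion stands.
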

\begin{proof}

Without loss of generality, assume that $\nu = \frac{dz_i}{z_i}$
and $\tau_\nu = z_i \pd_{z_i}$.  To simplify notation, we will
suppress the subscript $i$ in the proof.   Let
$\psi:\Om^1_{\tMg}(\fP^1)\to\Om^1_{\tMg}(\fP^{-r+1})$ be defined by
$\psi(Y)=\tau_\nu (Y) - [Y, \hmA_{\nu}]$.


First, we prove the case $r = 0$, so $e = 1$ and $P =\GL_n(\fo)$.
Note that $\hmA_{ \nu} \in (\tfl)'$ by Definition~\ref{tM} and
$\tau_\nu (Y) = \ell Y$ for $Y\in\Om^1_{\tMg}(z^{\ell}\gl_n(\C))$.  It
follows that $\psi : z^\ell \gl_n(\C) \to z^\ell \gl_n(\C)$ is an
isomorphism, since $-\ell$ is never an eigenvalue of
$\ad(\hmA_{\nu})$.  Now, suppose that $X \in
\Om^1_{\tMg}(\fg^\ell)+\J$ for $\ell \ge 1$, say $X\in
X_\ell+\Om^1_{\tMg}(\fg^{\ell+1})+\J$ for some
$X_\ell\in\Om^1_{\tMg}(z^\ell \gl_n(\C))$.  (This holds trivially for
$\ell=1$.)  We see that the image of
$\psi(X_\ell)\in\Om^1_{\tMg}(\fg^{\ell+1})+\J$.  However, we
have shown that $\psi$ induces an automorphism of
$\Om^1_{\tMg}(\fg^\ell/\fg^{\ell+1})$, so $X_\ell\in
\Om^1_{\tMg}(\fg^{\ell+1})+\J$.  By induction, we conclude
that $X\in\J$.

Let $W_\ell = \ker (\bpit) \subset \fP^\ell / \fP^{\ell+1}$.  We
define the projection $\pt : \gl_n(F) \to \ker(\pit)$ by $\pt (X) = X
- \pit(X)$, and write $\bpt : \fP^\ell/ \fP^{\ell+1} \to W_\ell$ for
the induced map on cosets.  Let
$\psi:\Om^1_{\tMg}(\fP^1)\to\Om^1_{\tMg}(\fP^{-r+1})$ be defined by
$\psi(Y)=\tau_\nu (Y) - [Y, \hmA_{\nu}]$. Since Lemma~\ref{del} states
that $\psi (t) = \d_e (t)\in\ft$ for all $t\in\Om^1_{\tMg}(\ft^1)$,
$\pt\circ\psi(t)=0$.  This implies that $\pt (\psi (Y)) = \pt (\psi
(\pt(Y)))$ for all
$Y\in\Om^1_{\tMg}(\fP^\ell)$. 

First, we show inductively that $X'=\pt(X)\in\J$.  Suppose
that $X' \in \J + \Om^1_{\tMg}(\fP^\ell)$, say $X'\in X'_\ell
+ \J$ with $X'_\ell\in\Om^1_{\tMg}(\fP^\ell)$.  Since
$\psi(X)\in\J$, we obtain $\pt (\psi (X'_\ell))=\pt (\psi
(X))+\J\subset \J$.  By part~\eqref{cores5} of
Proposition~\ref{cores}, the map $\bpsi : W_\ell \to W_{\ell - r}$
induced by $\pt \circ \psi$ is an isomorphism, so $X'_\ell \in
\J + \Om^1_{\tMg} (\fP^{\ell+1})$. It follows that $X' \in
\J + \Om^1_{\tMg} (\fP^{\ell+1})$.  Since the inductive
hypothesis holds trivially for $\ell=1$, we conclude that
$X'\in\J$.

 We now have $X\in\pit(X)+\J$ and also
 $\d_e(\pit(X))=\psi(\pit(X))=\psi(X)-\psi(X')\in\J$.  Since
 $\d_e$ is an automorphism of $\ft^1$, this gives
 $\pit(X)\in\J$, and hence $X \in \J$.

\end{proof}

In the following, 
let $\chi$ be the section of $\TGMg|_N$
constructed in Lemma~\ref{TGM}.
\begin{proposition}\label{modI}
The following identity holds modulo $\I^\s$:
\begin{equation*}\label{ceq2}
\Up^0 +  \chi \in  \Ad(\hg_i^{-1}) \dAi' + \hg_i^{-1} d \hg_i 
+ \I^\s.
\end{equation*}
\end{proposition}

\begin{proof}

By Proposition~\ref{indc},
\begin{equation*}
\tau_\nu \Up^0 - d \a - [\Up^0+ \chi, \a] + \Om^1_{\tMg}(\fg_j^1 \frac{dz_j}{z_j \nu} ) = 
 \Xi_j  - [\chi, \a] \in \I^\s.
\end{equation*}
Thus, if $\nu$ is a global meromorphic one-form on $\proj$, we obtain
$\nu \wedge \left(\tau_\nu \Up^0 - d \a- [\Up^0 + \chi , \a] \right)
\in \I^\s$.  This follows from the observation that the principal part
at each $x_j \in \proj$ lies in $\I^\s$.  Applying the interior
derivative $\io_{\tau_\nu}$ implies that
\begin{equation}\label{gloform} \tau_\nu \Up^0 - d \a- [\Up^0 + \chi ,
  \a] \in \I^\s.
\end{equation}

Now, we set $\nu = \frac{dz_i}{z_i}$ and $\tau_\nu = z_i \pd_{z_i}$.
Note that $\Up^0 - \Ad(\hg_i^{-1}) (\dAi') \in \Om^1_{\tMg} (\fg_i)$
by construction.  Therefore, setting $\hp_i^\s = \hg_i (g_i^\s)^{-1}$,
\begin{multline*}
\Ad(g_i^\s) (\Up^0) + \Ad(g_i^\s)(\chi) - (\Ad((\hp^\s_i)^{-1}) (\dAi') + (dg^\s_i) (g^\s_i)^{-1} )  \in \\
-\phi_i ((d g^\s_i) (g^\s_i)^{-1} + \Ad((\hp^\s_i)^{-1}) (\dAi') 
-\Ad(g^\s_i) (\Up^0)) + \Ad(g^\s_i)( \chi) + \Om^1_{\tMg}(\fP^1_i) =
\\  -\Ad(g_i^\s)(\Th_i^\s -\chi) \in \I^\s.
\end{multline*}
It follows that 
$\Up^0 +  \chi \in  \Ad(\hg_i^{-1}) \dAi' + \hg_i^{-1} d \hg_i 
+ \I^\s + \Om^1_{\tMg} (\fP^1_i).$


Let $\Up' = \Ad(\hg_i) (\Up^0 +\chi) - (d \hg_i) \hg_i^{-1}$.
From the argument above, we see that $\Up' - \dAi' \in
\Om^1_{\tMg}(\fP_i^1) + \I^\s$.  Substituting $\Ad(\hg_i^{-1})
(\Up') + \hg_i^{-1} d \hg_i = \Up^0 + \chi$ and $\Ad(\hg_i^{-1})
\hmA_{i\nu} + \hg_i^{-1} \tau_\nu (\hg_i) = \a$ into \eqref{gloform},
we obtain
\begin{equation*}
\tau_\nu (\Up') - d \hmA_{i \nu} - [\Up', \hmA_{i \nu}] \in \I^\s.
\end{equation*}

Since
$ d \hmA_{i \nu} - \dto_i = \tau_\nu (\dAi') - [\dAi', \hmA_{i \nu}]$ by
\eqref{curvprop},
we deduce that
\begin{equation*}
\tau_\nu (\Up'- \dAi') -
[\Up'- \dAi', \hmA_{i\nu}] \in \dto_i +  \I^\s.
\end{equation*}
The left hand side lies in $\I^\s$, since $\dto_i \in \I^\s$.
The statement of the Lemma now follows, since by Lemma~\ref{ideal}, $\Up' -
\dAi' \in \I^\s$.





\end{proof}


\begin{corollary}\label{chicorollary}
  The sections $\chi_i$ of $\TGMg$ satisfy $\tk+\chi_i\in\I^\s$.
  Moreover, $\chi_i \in \chi_j + \I^\s$ and $d \chi_i + \chi_i \wedge
  \chi_i \in \I^\s$.
\end{corollary}
\begin{proof}
  Let $\chi$ be the section of $\TGMg$ constructed in
  Proposition~\ref{indc}.  To prove the first statement, it suffices
  to show that $\chi_i - \chi \in \I^\s$.  However, for any $i\in I$,
\begin{equation*}
\chi_i - \chi = \phi_i (\Ad(\hg_i^{-1}) \dAi' + \hg_i^{-1} d \hg_i - \Up^0 - \chi) \in \I^\s
\end{equation*}
by Proposition~\ref{modI}.  

In order to prove the second part, we recall that $\Up^0$ is defined
as $\vs((\bPhi_j)_{j\in I})$.  It follows that $d \Up^0 = \vs ((d
\bPhi_j)_{j \in I})$.  By construction, $\bPhi_j = \Ad(\hg_j^{-1})
(\dAj)$.  Therefore, $d \bPhi_j \in -[\Ad(\hg_j^{-1}) \dAj, \hg_j^{-1}
d \hg_j] + \Om^1_N(\fg_j)$ since $d \dAi = 0$.
 
By Proposition~\ref{modI}, $\Ad(\hg_j^{-1}) \dAj + \hg_j^{-1} d \hg_j \in \Up^0 + \chi_i + \I^\s$.
We conclude that 
\begin{equation*}
[\Ad(\hg_j^{-1}) \dAj, \hg_j^{-1} d \hg_j] \in -(\Up^0+\chi_i) \wedge 
(\Up^0+\chi_i) + \I^\s + \Om_N^2 (\fg_j),
\end{equation*}
and moreover $ \vs( (d \bPhi_j)_{j \in I}) \in -\vs ( \left((\Up^0 +
  \chi_i) \wedge (\Up^0 + \chi_i)\right)_{j \in I} ) + \I^\s.  $ Thus,
by Lemma~\ref{res0}, $ (\Up^0 + \chi_i) \wedge (\Up^0 + \chi_i) + \vs(
(d \bPhi_j)_{j \in I}) \in \chi_i \wedge \chi_I + \I^\s.  $

Finally, 
\begin{multline*}
d (\Ad(\hg_i^{-1}) \dAi  + \hg_i^{-1} d \hg_i ) =\\
 -(\Ad(\hg_i^{-1}) \dAi  + \hg_i^{-1} d \hg_i )
\wedge (\Ad(\hg_i^{-1}) \dAi  + \hg_i^{-1} d \hg_i)\\
\in -(\Up^0 + \chi_i) \wedge (\Up^0+\chi_i) + \I^\s.
\end{multline*}

It follows that 
\begin{equation*}
\begin{aligned}
d \chi_i + \chi_i \wedge \chi_i &= \phi_i \left(d \left(\Ad(\hg_i^{-1}) \dAi' + \hg_i^{-1} d \hg_i - \Up^0\right)
\right)  
+ \chi_i \wedge \chi_i \\
&  \in \phi_i (- (\Up^0 + \chi_i) \wedge (\Up^0 + \chi_i) + (\Up^0 + \chi_i)\wedge (\Up^0 + \chi_i) - \chi_i \wedge \chi_i) + \chi_i \wedge \chi_i + \I^\s \\
& = \I^\s.
\end{aligned}
\end{equation*}
\end{proof}

\begin{corollary}\label{gloideal}
Fix $j \in I$.
The ideal $\I^\s$ is  generated by $\Xi_i - [\chi_j, \a]$, $\Th_i + \piL(\chi_j)$,
and $\dto_i$ for all $i \in I$.
In particular, $\I^\s$ is the restriction of $\I$ to $N$, and $\I$
does not depend on the choice of $j$.
\end{corollary}
\begin{proof}
By Corollary~\ref{chicorollary}, $\chi_j$ satisfies the the conditions of Proposition~\ref{indc}.
The same proposition implies that $\I^\s$ has the given generators.

\end{proof}

\subsection{Invariance}

In this section, we will show that $\I$ is the pullback of a
differential ideal $\bI$ on $\tM(\bfP, \bfr, \bfx)$.  Moreover, the
ideal $\bI$ is Pfaffian and integrable.

\begin{proposition}\label{propiox} Let $X$ be a section of $\TGMg$,
  and take $\om \in \I$.  Then, $\iota_X \om \in \I$.  In particular,
  if $\om \in \I \cap \Om^1_{\tMg}$, then $\iota_X \om = 0$.
\end{proposition}
\begin{proof}
It suffices to show that the generators of $\I$ are in the kernel of $\iota_X$.
Since $\hmA_i$ is invariant
under the action of $\GL_n(\C)$, we see that $\io_X \dto_i = 0$ and
$\io_X (\dAi) = 0$.  It follows that any term involving $\dAi$ lies  in the kernel
of $\io_X$.
We now calculate $\io_X (\Xi_i - [\chi_j, \a])$.  
By the argument above,
\begin{equation*}
\io_X (\Xi_i ) = \io_X (-d \a) = -[X, \a].
\end{equation*}
On the other hand, 
\begin{equation}\label{iox}
\io_X (\chi_j) = \io_X \phi_j (\hg_j^{-1} d \hg_j)
= - X.
\end{equation}
It follows that $\io_X (\Xi_i - [\chi_j, \a]) = -[X, \a] + [X, \a] = 0.$
On the other hand, 
\begin{equation*}
  \io_X (\Th_i) = \io_X((dg_i) g_i^{-1} + \Om^1_{\tMg} (\fu_i)) = \Ad(g_i) (X) + \fu_i.
\end{equation*}
Using \eqref{iox}, we see that 
$\io_X (\Th_i + \piL(X)) = \Ad(g_i) (X) - \Ad(g_i) (X) + \fu_i = \fu_i$.
\end{proof}

We now show that the ideal $\I$ is $\GL_n(\C)$-invariant.
\begin{proposition}\label{pb}  Let $g:V\to\GL_n(\C)$ be a function defined
  on a neighborhood $V\subset\tMg$, and let $\hr_g:V\to\tMg$ be the
  map determined by the action of $g$.  Then, $\hr_g^* \I = \I$.
  Moreover, $\hr_g^* (\Xi_i - [\chi_j, \a]) = \Ad(g) (\Xi_i - [\chi_j,
  \a])$ and $\hr_g^* (\Th_i+ \piL (\chi_j)) = \Th_i + \piL(\chi_j))$.
\end{proposition}
\begin{proof}
First, we calculate $\hr_g^* (\chi_j)$.  One easily checks that
\begin{equation}\label{hrchi}
\hr_g^* (\chi_j) = \Ad(g) \chi_j - (dg) g^{-1}.
\end{equation}
We also see that $\hr_g^* (\Xi_i) = \Ad(g) (\Xi_i) -[ (dg) g^{-1},
\a]$ and $\hr_g^* (\Th_i) = -\Ad(g_i g^{-1}) ((dg) g^{-1}) + \Th_i$.
We see that $\hr_g^* (\Xi_i - [\chi_j, \a]) = \Ad(g) (\Xi_i - [\chi_j,
\a]) \in \I$ and $\hr_g^* (\Th_i+ \piL(\chi_j)) = \Th_i + \piL (\chi_j)
\in \I$.  Finally, $\hr_g^* \dto_i = \dto_i$.  It follows that
$\hr_g^* (\I) \subset \I$.
\end{proof}

In the following, $\tM = \tM (\bfP, \bfx, \bfr)$ and  $q: \tMg \to \tM$
is the natural projection.
\begin{corollary}
There exists a differential ideal $\bI$ on $\tM (\bfP, \bfx, \bfr)$
such that $q^* \bI = \I$.
\end{corollary}
\begin{proof}

Since $\tMg$ is a principal $\GL_n(\C)$-bundle over $\tM$,
we may choose a cover of $\tM$ by neighborhoods $W$
with the property that $q^{-1} (W) \cong \GL_n(\C) \times W$.  
Define $\Th_i^G$ and $\Xi_i^G$ on $q^{-1} (W)$ by
$\Th_i^G (g, w) = \Th_i + \piL(\chi_j) + \Om^1_{\tMg}(\fu_i)$ and
$\Xi_i^G (g, w) = \Ad(g^{-1}) (\Xi_i - [\chi_j, \a])$.  It is clear that the collection
 $\Th_i^G,$ $\Xi_i^G$, and $\dto_i$ over all $i \in I$ generates $\I|_{q^{-1} (W)}$.
By Proposition~\ref{pb},
$\hr_{h}^* (\Th_i^G) = \Th_i^G$ and $\hr_{h}^* (\Xi_i^G) = \Xi_i^G$.

Proposition~\ref{propiox} shows that each of the generators
determines a well-defined form on $q^* T(W) \cong T(q^{-1}(W)) / \gl_n(\C)$.
By invariance, $\Th_i^G,$ $\Xi_i^G$, and $\dto_i$
descend to differential forms on $T(\tM)$.  These generate the
ideal $\bI$.

\end{proof}

\section{Integrability} \label{descent}

We will now show that $\I$ is an integrable Pfaffian system
in the sense of \cite[II.2.4]{HH}.   In general, if
$\J$ is a differential ideal on a smooth variety $X$, 
we write $\J^1 = \J \cap \Om^1_{X}$.
It suffices to show that the coherent sheaf
$\I^1$ is locally free and that $d \I \subset \I$.  

Let $\J$ be the ideal on $\tMg$ generated by $\Xi_i$, $\dto_i$, and
$\Th_i$ for all $i \in I$.  Note that $\I\subset\J$.  We will first
show that $\J$ is a Pfaffian system and that $\I ^1 \subset \J^1$
has constant corank.


We need to describe a (local) minimal basis for $\J$.  Since $\J$ is
independent of the choice of one-form $\nu$, we may assume that $\nu =
\frac{dz_i}{z_i}$ when working locally at $x_i$.  As in
Section~\ref{theideal}, we will restrict to sufficiently small neighborhoods $N_i
\subset U_i \backslash \GL_n(\C)$ and the corresponding neighborhood
$N\subset \tMg$.


By part \eqref{cores2} of Proposition~\ref{cores}, the map $\piti$
induces a map $\pi'_{\ft_i} : \fP_i/\fP_i^{r_i} \to \ft_i/ \ft_i^{r_i}$.
Choose a  vector space lift $Z^{od}_i \subset \fP_i$ of $\ker
(\pi'_{\ft_i})$ and a lift
$Z^{\fu}_i\subset \fg_i$ of $\fg_i /
\fP_i$.  Note that these vector spaces are trivial when $r=0$.
We define locally the `off-diagonal' and $\fu$ components of
$\Xi_i$ to be $\Xi_i^{od} = 
\xi_i  |_{Z^{od}}\in \Om^1_{N}
((Z^{od})^\vee)$ and $\Xi_i^{\fu} = 
\xi_i  |_{Z_i^{\fu}}\in \Om^1_{N}
((Z_i^{\fu})^\vee)$ respectively. 

\begin{proposition}\label{gens}
  The ideal $\J|_N$ is generated by $\Th_i $, $\Xi_i^{od}$,
  $\Xi_i^{\fu}$, and $\dto_i$ for $i\in I$.
\end{proposition}
\begin{proof}
As above, we assume that $\nu = \frac{dz_i}{z_i}$.
By Lemma~\ref{image}, it suffices to show that $\xi_i(\fg_i)$
lies in the ideal $\J'$ generated by
$\dto_i$, $\Th_i $, $\Xi_i^{od}$, and $\Xi_i^{\fu}$, 
since $\J' \subset \J$.

Let $\J_1$ be the ideal generated by $\Th_i $ and
$\dto_i$ for each $i \in I$.  By Lemma~\ref{modIde1} (applied with $\chi=0$), 
$\Up^0 
\in \Ad(\hg_i^{-1}) \dAi' + \hg_i^{-1} d \hg_i + \Om^1_{W}
((\fP_i^1)^{g_i}) + \J_1.$ Lemma~\ref{dfqlem} states that
\begin{equation}
  \Ad(g_i^\s) \Xi_i  \in \Om^1_{W} ([\fP_i^1, \Ad(g_i^\s) (\a)] +
  \fP_i^1) + \J_1\subset  \Om^1_{W} (\fP^{-r+1}) + \J_1.
\end{equation}
Therefore, $\xi_i (\fP_i^r) \subset \J_{1}$.

We will now show that $\Ad(g_i^\s) (\Xi_i) \in \J' + \Om^1_{N}
(\fP_i^1)$.  Take $1\le j\le r$, and
assume inductively that there exists $X \in \Om^1_{N}(\fP_i^j)$
such that
\begin{equation}\label{indstep}
\Ad (g_i^\s) \Xi_i \in [X, \Ad(g_i^\s) (\a)] + \J'+ \Om^1_{N}(\fP_i^1).
\end{equation}
Part \eqref{cores3} of Proposition~\ref{cores} shows that $[X,
\Ad(g_i^\s) (\a)] + \Om^1_{N} (\fP_i^{j-r+1}) \in \ker (\bpiti)$.
Thus, by part~\eqref{cores4} of the same proposition, $\langle [X,
\Ad(g_i^\s) (\a)], \ft_i^{r-j} \rangle_\nu = \{0\}$.  Since
$\Xi_i^{od}\in\J'$ , $\langle [X, \Ad(g_i^\s) (\a)], Y \rangle_\nu \in
\J'$ for $Y \in \ker (\piti) \cap \fP^{r-j}$.  Combining these two
facts gives $ \langle [X, \Ad(g_i^\s) (\a)], \fP^{r-j} \rangle_\nu \in
\J'$, and we conclude that $[X, \Ad(g_i^\s) (\a)] \in
\Om^1_{N}(\fP_i^{j-r+1}) + \J'$.  Part \eqref{cores5} of
Proposition~\ref{cores} now implies that $X \in \pi_\ft (X) +
\Om^1_{N}(\fP_i^{j+1})+ \J'$.  Finally, since there exists $p \in
P_i^1$ such that $\Ad(p) (\Ad(g_i^\s)(\a)) \in \ft_i^{-r} + \fP_i^1$,
we see that $X - \Ad(p^{-1}) (\pi_\ft (X)) \in \Om^1_{N} (\fP_i^{j+1})
+ \J'$ satisfies \eqref{indstep} for $j+1$.  By induction, we obtain
\eqref{indstep} for $r+1$.  This gives $\Ad(g_i^\s) (\Xi_i) \in \J' +
\Om^1_{N} (\fP_i^1)$ and hence $ \xi_i (\fP_i) \subset \J'$.  Finally,
since $\fg_i = Z_i^\fu + \fP_i$, we see that $\xi_i (\fg_i) \subset
\I'$ as desired.

\end{proof}

In the following, let $\iota : \tMg \to \prod_{i \in I} \tM_i$ be the
inclusion and $T^*\iota : \iota^* (\Om^1_{ \prod_{i \in I} \tM_i}) \to
\Om^1_{\tMg}$ the induced bundle map.  The generators of $\I$ (resp.
$\J$) lift to a set of generators for a differential ideal $\tI
\subset \iota^* \Om^*_{\prod_{i \in I} \tM_i}$ (resp. $\tJ$) on
$\tMg$.  Only the lift $\tXi_i$ of $\Xi_i$ requires explanation.
Suppose that $m = ( (U_j g_j,\a_j)_{ j \in I}) \in \tMg$.  We write
$\a_{i \nu}$ for a representative of $\a$ in $(\fg_i^{-r_i'}
\frac{dz_i}{z_i \nu}) / (\fg_i^1 \frac{dz_i}{z_i \nu} )$.  Therefore,
if $m \in \tMg$, $\a_{i \nu} = \a + \fg_i^1 \frac{dz_i}{z_i \nu}$.  We
set
\begin{equation*}
\tXi_i = \tau_\nu \Up^0 - d \a_{i \nu} - [\Up^0 , \a_{i \nu}].
\end{equation*}
It is clear that $T^*\iota (\tXi_i)=\Xi_i$.  All other forms involved
in the definitions of $\I$ and $\J$ are restrictions of forms on
${\prod_{i \in I} \tM_i}$, and we will not make any notational
distinction between such a form and its restriction to $\tMg$.

\begin{lemma}\label{dmu}
Let $\mu$ be the moment map for the action of
$\GL_n(\C)$ on $\prod_{i \in I} \tM(P_i, r_i)$.  Then, 
$d \mu \subset \tI$.
\end{lemma}
\begin{proof}
  Let $\nu$ be the global choice of one form.  By construction, $\mu (
  (U_i g_i, \a_i)_{i \in I}) = \sum_{i \in I} \res (\a_i)$.  Let $\chi
  = \chi_j$, so $\tXi_i - [\chi, \a]$ is one of the generators of
  $\tI$.  We have
\begin{equation*}
\begin{aligned}
\sum_{i \in I} \Res_i (\nu \wedge (\tXi_i- [\chi, \a])  &= \sum_{i \in I} \Res_i(\nu \wedge \tau_\nu ( \Up^0)) - 
\Res_i (\nu \wedge d \a_{i\nu} ) - \Res_i (\nu \wedge [\Up^0+\chi , \a ]) \\
& = \sum_{i \in I} \Res_i (\nu \wedge d \a_{i \nu}).
\end{aligned}
\end{equation*}
Here, $\sum_{i \in I} \Res_i (\nu \wedge \tau_\nu (\Up^0))$ and 
$\sum_{i \in I} \Res_i (\nu \wedge [\Up^0, \a])$
vanish by the residue theorem.
Since $\Res_i (\nu \wedge \a_{i \nu} ) = \res_i (\a_i)$, we see that
$d \mu = \sum_{i \in I} \Res_i (\nu \wedge d \a_{i \nu})$ lies in $\I$.
\end{proof}


Let $\tMg(\bfA)=q^{-1}(\tM(\bfA))$ and $N(\bfA)=N\cap\tMg(\bfA)$.  We
also set $\bN=\prod_{i \in I} (\tpsi_i)^{-1} (N_i)\subset\prod_{i \in
  I} \tM_i$ and $\bN(\bfA) = \bN \cap \left(\prod_{i \in I}
  \tM(A_i)\right)$.

\begin{proposition}\label{Pfaffian}
  The ideal $\J$ is a Pfaffian system.  Moreover, the restriction map taking
  $\J^1 \subset \Om^1_{\tMg}$ to $\Om^1_{\tMg(\bfA)}$ is surjective.
\end{proposition}
\begin{proof}
  This is a local statement, so we will work with the restriction of
  $\J$ to $N$.  Fix a point $m = (U_i g_i,\a_i)_{i
    \in I} \in N(\bfA)$.  We will first show that the restriction of
  $\tJ^1$ to $\bN(\bfA)$ spans $\Om^1_{\bN(\bfA)}$.

  There are isomorphisms $\bN \cong \prod_i (\fP_i /
  \fP_{r+1})^\vee_{reg}\times N_i$
  and $\bN(\bfA) \cong \prod_{i \in I} (\pPo)^{-1} (\Oo_i)\times
  N_i$.
    Restricting $\tJ$ to $\bN(\bfA)$, we see that all terms 
   in $\Th_i$ and $\tXi_i$ involving
  $\dAi$ vanish.  
   In particular, $\Up^0$ and $\Phi_i$ vanish, so $\Th_i $ becomes
  $(dg_i) g_i^{-1} + \Om^1_{\bN(\bfA)} (\fu_i)$, and $\tXi_i$ is simply
  $d\a + \Om^1_{\bN(\bfA)} (\fg_i^1 \frac{dz_i}{z_i \nu})$.  Write
  $\Ad(g_i) (\a) = v \in (\pPo)^{-1} (\Oo_i)$.  Since $d \a =
  \Ad(g_i^{-1}) (d v -[(dg_i) g_i^{-1}, v])$, it is easily checked
  that the coefficients of $(dg_i) g_i^{-1}$ and $d \a$ span $T^*_{m}
  (\bN(\bfA))$.  
  
  On the other hand, a calculation using \cite[Lemma
  3.17]{BrSa} shows that $\dim T^*_{m} (\bN (\bfA))=\dim \gl_n(\C)/\fu
  +\dim \fg/\fP^r - \dim \ft^1/\ft^r$, and so the coefficients of the
  set of generators for $\tJ$ in Proposition~\ref{gens} give a basis
  for $T^*_{m} (\bN (\bfA))$.  Thus, $\tJ$ is a rank $\sum_{i \in I} \dim
  (\tM(A_i))$ Pfaffian system.

  Lemma~\ref{dmu} shows that $d \mu \in \tJ$.  Since the coefficients
  of $d \mu$ span the conormal bundle of $\tMg$ in $\prod_{i \in I}
  \tM(P_i, r_i)$, we see that the pullback of $\tJ$ in $\Om^*_{\tMg}$,
  i.e., $\J$, is a rank $\left(\sum_{i \in I} \dim (\tM(A_i))\right) - n^2$
  Pfaffian system.  Moreover, the restriction map taking 
  $\J^1$ to $\Om^1_{N(\bfA)}$ induced by the map $\tJ^1 \to \Om^1_{\bN(\bfA)}$
  is still surjective.
    \end{proof}

\begin{lemma}\label{JandI}
Fix $j \in I$.
The ideal $\J$ is generated by $\I$ and the coefficients of $\chi_j$.
\end{lemma}
\begin{proof}
We know that $\I \subset \J$.  By Corollary~\ref{chicorollary}, 
$\tk + \chi_j \in \I$.  However, by construction, $\tk \in \J$; it follows that
$\chi_j \in \J$.  

In order to prove the reverse inclusion, Proposition~\ref{indc} shows
that $\Xi_i - [\chi_j, \a] \in \I$.  By assumption, $\Th_i +
\piL(\chi_j) \in \I$ as well.  It follows that $\Xi_i$ and $\Th_i$ lie
in the ideal generated by $\I$ and $\chi_j$.
\end{proof}

There is a natural restriction map $\Om^1_{\tMg} \to (\TGMg)^\vee$.  
We denote by $F$ the induced map $F : \J^1 \to (\TGMg)^\vee$.
\begin{proposition}
The map $F$ is surjective, and $\I^1$ is the kernel of $F$.
Moreover, $\I$ is a Pfaffian system.
\end{proposition}

\begin{proof}
  First, we prove surjectivity.  For a point $m\in\tMg(\bfA)$, the map
  $F$ factors through restriction to $T((\tMg(\bfA))$, since
  $\tMg(\bfA)$ is $\GL_n(\C)$-invariant.  By
  Proposition~\ref{Pfaffian}, the restriction map $\J^1 \to
  \Om^1_{\tMg(\bfA)}$ is a surjective bundle map.  Since the map
  $\Om^1_{\tMg(\bfA)} \to \TGMg^\vee$ is surjective, it follows that
  $F$ is surjective.

  By Proposition~\ref{propiox}, $\I^1 \subset \TGMg^\perp$.  We
  conclude that $\I \subset \ker(F)$.  Finally, since $\I^1$ and
  $\chi_j$ generate $\J^1$, the rank of $\I^1$ must be at least
  $\rk(\J^1) - n^2$.  The rank of the kernel of $F$ is $\rk(\J^1) -
  n^2,$ so $\I^1 = \ker(F)$.  Therefore, $\I$ has constant rank and is
  thus a Pfaffian system.
\end{proof}
We will now prove that $\I$ satisfies the Frobenius integrability condition.

\begin{lemma}\label{modI2} Set $\chi = \chi_j$ as above.
  Then,
\begin{equation*}
d (\Up^0 + \chi) + (\Up^0 + \chi) \wedge (\Up^0+ \chi) \in \I.
\end{equation*}
\end{lemma}
\begin{proof}


First, we will show that  
\begin{equation}\label{weq}
d (\Up^0) \in -(\Up^0 + \chi) \wedge (\Up^0 + \chi) + \Om^2_{\tMg} (\fg_i) + \I.
\end{equation}
By construction,  $\Up^0 \in \Ad(\hg_i^{-1} (\dAi')) +\Om^1_{\tMg} (\fg_i)$.  Therefore,
\begin{equation*}
d (\Up^0) \in - [\Ad (\hg_i^{-1}) (\dAi'),\hg_i^{-1} d \hg_i] + \Om^2_{\tMg} (\fg_i).
\end{equation*}
On the other hand, by Proposition~\ref{modI}, 
\begin{multline}
(\Up^0 + \chi) \wedge (\Up^0 + \chi)  \in \\
( \Ad(\hg_i^{-1}) \dAi' + \hg_i^{-1} d \hg_i ) \wedge  (\Ad(\hg_i^{-1}) \dAi' + \hg_i^{-1} d \hg_i ) 
+ \Om^2_{\tMg}(\fg_i) + \I.
\end{multline}
The right hand side is equivalent to
$ [\Ad(\hg_i^{-1}) \dAi', \hg_i^{-1} d \hg_i] + \Om^2_{\tMg} (\fg_i)+ \I.$
This proves \eqref{weq}.  

It suffices to show that
\begin{equation}\label{weq2}
  (\Up^0 + \chi) \wedge (\Up^0 + \chi)=
  \vs( ((\Up^0 + \chi) \wedge (\Up^0 +\chi) + \Om^2_{\tMg}(\fg_i))_{i \in I}) + \chi \wedge \chi,
\end{equation}
since the right hand side is equal to  
$-d (\Up^0 + \chi) \pmod{\I}$ by the arguments above and Corollary~\ref{chicorollary}.
By Lemma~\ref{res0}, 
$(\Up^0 + \chi)|_{z_0 = 0}= \chi$.  
Moreover, 
\begin{equation*}
\left((\Up^0 + \chi) \wedge (\Up^0 + \chi)\right)|_{z_0 = 0} = 
\chi \wedge \chi.
\end{equation*}
Another application of  Lemma~\ref{res0} gives \eqref{weq2}.

\end{proof}

\begin{lemma}\label{modI3}
\begin{equation*}
d (\Th_i - \Ad(g_i) (\chi)) \in \Om^2_{\tMg}(\fu_i)+ \I.
\end{equation*}
\end{lemma}
\begin{proof}
  Throughout, we will write $\Up = \Up^0 +\chi$.  As in the beginning
  of Section~\ref{theideal}, we write $\hp_i = \hg_i g_i^{-1}$.  By
  Proposition~\ref{modI}, we see that
\begin{equation*}
\Ad(g_i) (\Up) \in \Ad(\hp_i^{-1}) (\dAi') + \hp_i^{-1} d\hp_i + (dg_i) g_i^{-1} + \I.
\end{equation*}
A sequence of calculations using the expression above and Lemma~\ref{modI2} 
produces
\begin{equation*}\label{pf2}
\begin{aligned}
d (\Ad(g_i)  \Up) & = [(dg_i) g_i^{-1}, \Ad(g_i) (\Up)]
+ \Ad(g_i) d (\Up) \\
& \in  [(dg_i) g_i^{-1}, \Ad(g_i) (\Up)] - \Ad(g_i) \left(\Up \wedge \Up \right) + \I\\
& =  [(d g_i) g_i^{-1}, \Ad(\hp_i^{-1}) (\dAi') + \hp_i^{-1} d \hp_i + (dg_i) g_i^{-1}] 
\\ 
&- (\Ad(\hp_i^{-1}) (\dAi') + \hp_i^{-1} d \hp_i + (dg_i) g_i^{-1}) 
\wedge 
(\Ad(\hp_i^{-1}) (\dAi') + \hp_i^{-1} d \hp_i + (dg_i) g_i^{-1}) + \I \\
& = -dg_i d g_i^{-1} + d \hp_i^{-1} d \hp_i - 
[\hp_i^{-1} d \hp_i, \Ad(\hp_i^{-1}) \dAi')] + \I \\
& = -dg_i dg_i^{-1} + d\hp_i^{-1} d \hp_i + d (\Ad(\hp_i^{-1}) \dAi')) + \I.
\end{aligned}
\end{equation*}
The last line is equivalent to $ d \Phi_i  - dg_i dg_i^{-1} + 
\I \pmod{\Om^1_{\tMg} (\fP_i^1)}$.
Therefore,
\begin{equation*}
  d (\Phi_i + (dg_i) g_i^{-1} - \Ad(g_i) (\Up^0+ \chi) \in \Om^2_{\tMg} (\fP_i^1) + \I.
\end{equation*}
Applying $\bphi_i$ to the equation above gives
$d(\Th_i - \Ad(g_i) (\chi)) \in \Om^2_{\tMg} (\fu_i) + \I.$

\end{proof}

\begin{proof}[Proof of Theorem~\ref{int}, part~\eqref{inta}]
  As above, write $\Up = \Up^0 + \chi$.  We have already shown
  that $d (\Th_i - \Ad(g_i) (\chi)) \in \Om^2_{\tMg} (\fu_i) + \I$.  A standard,
  but tedious, calculation
  shows that
\begin{equation*}
d (\Xi_i-[\chi,\a]) = 
\tau_\nu ( d \Up +\Up \wedge \Up) - [ d \Up + \Up \wedge \Up , \a] +[ \Xi_i, \Up]
+ \Om^1_{\tMg}(\fg_i^1 \frac{dz_i}{z_i \nu}),
\end{equation*}
so by Lemma~\ref{modI2}, $d (\Xi_i-[\chi,\a]) \in \I$.

Finally, by Theorem~\ref{modspace}, we may identify trivialized framed
global deformations $(\bfg, \bV, \tn)$ on $\proj \times \De$ with
analytic maps $\s : \De \to \tMg(\bfx, \bfP, \bfr)$. We will show that
the conditions of Corollary~\ref{eqdfq} are equivalent to the
vanishing of $\s^*\I$.   Suppose that $\s^* \I = 0$.  By
Corollary~\ref{chicorollary},
$d \chi + \chi \wedge \chi = 0$.  It follows that there exists an
analytic solution $g \in \GL_n(R)$ on $\De$ to the integrable
differential equation $d g = g.\chi$.  In particular, $\chi = g^{-1} dg$.

We observe that $\s^*
\I = \{0\}$ if and only if $\s^*(\Th_i -\Ad(g_i) (\chi)) = 0$
and $\s^* \Xi_i - [\chi, \a]= 0$ for all $i$, and the vanishing of these forms is
equivalent to conditions \eqref{eqflow} and \eqref{eqcurve} of
Corollary~\ref{eqdfq}.  Also, by Lemma~\ref{modI2}, $\s^* \I =
\{0\}$ implies $\s^* (d\Up + \Up \wedge\Up) = 0$, which is equivalent
to condition \eqref{eqdO}.

\end{proof}

Theorem~\ref{int} and Lemma~\ref{modI2} immediately show that the
third conditions in Theorem~\ref{defeq} and Corollary~\ref{eqdfq} are
redundant.

\begin{theorem}\label{redundant}  Let  $( \bfg, \bV, \tn)$ be a good framed
  deformation.  The following statements are equivalent.
  \begin{enumerate} \item $( \bfg, \bV, \tn)$ is integrable.
\item  $( \bfg, \bV, \tn)$  is $\GL_n(R)$-gauge-equivalent to a
  deformation satisfying the first two conditions of
  Theorem~\ref{defeq}.
\item There exists $g\in\GL_n(R)$ such that the first two conditions
  of Corollary~\ref{eqdfq} are satisfied.
\end{enumerate}
\end{theorem}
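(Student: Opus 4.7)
The plan is to read off the equivalences directly from Theorem~\ref{int} and Lemma~\ref{modI2}: the two earlier ``integrability'' results (Theorem~\ref{defeq} and Corollary~\ref{eqdfq}) each have three clauses, and the argument is that the third clause is already forced by the first two, modulo the differential ideal $\I_{\GL_n}$. By Theorem~\ref{modspace}, a good framed deformation $(\bfg,\bV,\tn)$ corresponds to an analytic map $\s:\De\to\tMg$, and the last assertion of Theorem~\ref{int} shows that $(\bfg,\bV,\tn)$ is integrable exactly when $\s^*\I_{\GL_n}=0$. This is the common hinge on which all three conditions turn.

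The implications $(1)\Rightarrow(2)$ and $(1)\Rightarrow(3)$ are immediate, since Theorem~\ref{defeq} and Corollary~\ref{eqdfq} produce \emph{all three} of their conditions from integrability, in particular the first two. For $(3)\Rightarrow(1)$, suppose $g\in\GL_n(R)$ is chosen so that the first two conditions of Corollary~\ref{eqdfq} hold. Unpacking Definition~\ref{idef}, these are precisely $\s^*\Xi_i=0$ and $\s^*(\Th_i-\e(g^{-1}\,dg))=0$ for every $i\in I$; but the $\Xi_i$ together with the $\Th_i-\e(g^{-1}\,dg)$ are the algebraic generators of $\I_{\GL_n}$, so $\s^*\I_{\GL_n}=0$. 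Writing $\Up=\Up^0+g^{-1}\,dg$, Lemma~\ref{modI2} gives the membership
\begin{equation*}
d\Up+\Up\wedge\Up\in\I_{\GL_n},
\end{equation*}
whence $\s^*(d\Up+\Up\wedge\Up)=0$. This is the third condition of Corollary~\ref{eqdfq}, so all three hold for $(\bfg,\bV,\tn)$ with this choice of $g$, and integrability follows from the ``if'' direction of that corollary. The implication $(2)\Rightarrow(1)$ is obtained by the same argument applied to Theorem~\ref{defeq} in place of Corollary~\ref{eqdfq}, using the version of Lemma~\ref{modI2} with $g=\id$.

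There is no real obstacle here: all the work has already been absorbed into Lemma~\ref{modI2}, which is precisely the statement that the Maurer--Cartan/flatness condition on $\Up$ is a differential consequence of the curvature equation $\Xi_i$ and the framing equation $\Th_i-\e(g^{-1}\,dg)$. Theorem~\ref{redundant} is then a short bookkeeping corollary of Theorem~\ref{int} together with Lemma~\ref{modI2}.
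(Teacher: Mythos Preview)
Your proposal is correct and follows essentially the same approach as the paper, which states only that the result follows immediately from Theorem~\ref{int} and Lemma~\ref{modI2}; you have simply spelled out the logic that the proof of Theorem~\ref{int} already contains (namely, that conditions \eqref{eqflow} and \eqref{eqcurve} are equivalent to $\s^*\I_{\GL_n}=0$, and that Lemma~\ref{modI2} then forces condition \eqref{eqdO}). One small remark: your handling of $(2)\Rightarrow(1)$ via ``Lemma~\ref{modI2} with $g=\id$'' is a bit informal, since $g$ there is a coordinate on $\GL_n(\C)\times\tMg$ rather than a free parameter; it is cleaner to observe that $(2)\Leftrightarrow(3)$ by the substitution in the proof of Corollary~\ref{eqdfq}, and then invoke your argument for $(3)\Rightarrow(1)$.
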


\begin{proof}[Proof of Theorem~\ref{int}, part~\eqref{intb}]
Since $q^* \bI = \I$, and $\I$ is a Pfaffian system by Theorem~\ref{int}, one immediately
sees that $\bI$ is Pfaffian.  Specifically, the rank of $\bI^1$ at any point  $m \in \tM$
is equal to the rank of $\I^1$ at a point in $q^{-1} (m)$.  

In order to prove integrability, suppose that $\om \in \bI^1$.  Then,
$d (q^* \om) = q^* (d \om) \in \I$.  It now suffices to work on a
neighborhood $W \subset \tM$.  Choosing a local section $\s :W \to
q^{-1} (W)$ of the principal $\GL_n(\C)$-bundle, we see that $\s^*
\I|_{q^{-1} (W)} = \bI|_{W}$.  We deduce that $d \om = \s^* q^* (d
\om) \in \bI|_{W}$.

Finally, suppose that $f : \De \to \tM$.  Again, we choose a local
section $\s : W \to q^{-1} (W)$ of a neighborhood $W$ containing the
image of $\De$.  By the first part of Theorem~\ref{int}, $f$
corresponds to a framed integrable deformation if and only if 
$f^* \s^* (\I) = 0$.  However, this is equivalent to the statement
that $f^* (\bI) = 0$ since $\s^* (\I) = \bI |_{W}$.

\end{proof}

\section{Example}\label{example}
In this section, we give an explicit example of the system of
equations constructed above.  We will consider a space of rank $n$
meromorphic connections on $\proj$ with $m$ singularities of slope
$\frac{1}{n}$.  Let $\bfx$ be a set of $m$ finite points, and set $P_i
= I_i$ and $r_i = 1$ for all $i$.  If $z$ is the usual coordinate on
$\proj$, we write $z_i = (z - \xi_i).$ Accordingly,
$\o_i=\o_{T_i}=N+z_i E \in \fP^1_i$, where $N$ is the regular
nilpotent matrix in Jordan form and $E$ is the elementary matrix
$E_{n1}$.  Note that $\o_i^{-1} = N' + \frac{1}{z_i}E'$, where $N'$
and $E'$ are the transposes of $N$ and $E$ respectively.

We choose our one form to be $\nu =dz$,  so $\tau_\nu = \pd_z = \frac{d}{dz}$.
Choose a point $(U_i g_i , \a_i)_{i \in I} \in \tMg$ corresponding to
a connection $\n$, and write $[\n] = \a \nu$.  Thus, $\a \in \a_{i
  \nu} + \fI^1_i$ at each $i$ and $\sum_{i \in I } \Res_i (\a \nu) =
0$.  To simplify calculations, we assume that the normalized formal
type of $\n$ at $\xi_i$ has the representative
$\frac{1}{z_i}(-\frac{a_i}{n} \o_i^{-1} + H_T)$ under the pairing
$\langle, \rangle_\nu$.

We write $A_{\tM, i} = \o_i^{-1} da_i$ and $\Ad(g_i)(\a) =
\frac{1}{z_i}(-\frac{a_i}{n} \o_i^{-1} - \frac{1}{n} (D_i + X_i)+ H_T)
+ \fg_i$ for some traceless diagonal matrix $D_i$ and $X_i \in \fu_i$.
We let $\vr_i$ denote the residue term of $\a\nu$ at $x_i$.
Explicitly, $\vr_i=\phi_i (z_i \a
)=\Ad(g^{-1})(-\frac{1}{n}(N'+D_i+X_i)+H_T)$.

\begin{proposition} The isomonodromy equations for the system above are
\begin{subequations}\label{exmain}
\begin{gather}\label{ex}
\begin{multlined}
( (d g_i) g_i^{-1}, g_i) =
\left(\left[\sum_{j \in I \backslash i} \frac{1}{\xi_i - \xi_j}  \Ad(g_ig_j^{-1}) (E' da_j)\right]
 - \left[ N' da_i + D_i \frac{da_i}{a_i} \right] + 
 \fu_i, g_i \right) \\ \in   (\fu_i\backslash \gl_n(\C)) \times_{U_i}
\GL_n(\C)\text{ and }
\end{multlined}
\\
\label{exsecond} \begin{multlined} d \vr_i = \sum_{j \in I \backslash
    \{i\}} \left( \frac{1}{\xi_j - \xi_i} \left ([\Ad(g_j^{-1}) E',
      \vr_i] da_j+ [\Ad(g_i^{-1}) E', \vr_j]da_i \right) \right.
  \\
  \left.+\frac{1}{n} \frac{1}{(\xi_j - \xi_i)^2} [\Ad(g_i^{-1})E',
    \Ad(g_j^{-1})E'] d(a_i a_j) \right).
\end{multlined}
\end{gather}
\end{subequations}
\end{proposition}

\begin{rmk} In this system of partial differential equations, the
  independent variables are the $a_i$'s, which are essentially the
  formal types, while the dependent variables are the framings $g_i$
  and the residues $\vr_i$ of $\a\nu$ (which can be written explicitly
  in terms of $g_i$, $D_i$, and $X_i$).
\end{rmk}

\begin{proof}

First, observe that by definition,
\begin{equation*}
\Phi_i =  (\o_i^{-1}  da_i + D_i \frac{da_i}{a_i}) + \Om^1_{\De} (\fI_i^1).
\end{equation*}
Therefore, 
\begin{equation*}
\Up^0 = \sum_{i \in I} \Ad(g_i^{-1})\left(z_i^{-1} E'da_i \right),
\end{equation*}
and 
\begin{equation*}
\phi_i (\Up^0) = \sum_{j \in I \backslash \{i\}} \frac{1}{\xi_i - \xi_j}  \Ad(g_j^{-1}) (E' da_j).
\end{equation*}
We conclude that equation~\eqref{flow} from Theorem~\ref{defeq} is equivalent to \eqref{ex}.


Now, we consider equation~\eqref{curv} of Theorem~\ref{defeq}.  At $\xi_i$,
applying $\Ad(g_i)$ to the principal part of the curvature gives us
\begin{multline}\label{ex2}
-z_i^{-2} E' da_i \in \\
-\frac{1}{nz_i } (\o_i^{-1}da_i + d D_i + d X_i)  +
[\Ad(g_i)(\Up^0)- (dg_i) g_i^{-1}, \Ad(g_i)(\a)] + \Om^1_\De(\fg_i).
\end{multline}
First, we calculate $[z_i^{-1} E', \Ad(g_i)(\a)] + \fg_i$.  
Observe that
$[z_i^{-1} E', \Ad(g_i) (\a) dz]$ is a one form on $\proj_\De$ with poles along $\bfx$.
Thus, 
\begin{equation}\label{ex3}
[z_i^{-1} E', \Ad (g_i) (\a) dz] \in (-\frac{1}{n z_i} [z_i^{-1}E' , N' a_i + D_i + X_i-nH_T] + \frac{1}{z_i} Y
)dz+ \fg_i dz,
\end{equation}
where $Y$ is the residue term.  By the residue theorem,
$Y + \sum_{j \in I} \Res_j ([z_i^{-1} E', \Ad(g_i) (\a) dz]) = 0$.
We conclude that
\begin{equation*}
\begin{aligned}
Y & = \frac{1}{n} 
\sum_{j \in I \backslash \{i\}} \Res_j ( [z_i^{-1}E', \Ad(g_i g_j^{-1}) (\o_i^{-1}a_j + D_j + X_j
- n H_T)] \frac{dz}{z_j})\\
& = 
\frac{1}{n} 
\sum_{j \in I \backslash \{i\}} \frac{1}{\xi_j - \xi_i} [E', \Ad(g_i g_j^{-1}) (N'a_j + D_j + X_j - n H_T)]
- \frac{1}{ (\xi_j - \xi_i)^2} [ E', \Ad(g_i g_j^{-1}) E' a_j].
\end{aligned}
\end{equation*}

By Proposition~\ref{totalconnection} we see that
\begin{equation}\label{ex1}
  - z_i^{-2}E' da_i 
  =  -\frac{1}{n z_i} (\o_i^{-1} da_i) + \frac{1}{z_i}[\o_i^{-1} da_i, 
  H_T] .
\end{equation}

Comparing \eqref{ex2}, \eqref{ex3}, and \eqref{ex1}, we obtain the
condition
\begin{multline}\label{ex4}
-\frac{1}{nz_i} \left(  d (D_i + X_i) 
+ [z_i^{-1}E'  da_i, N' a_i + D_i + X_i - n H_T] \right)+
\\ \frac{1}{z_i} Y da_i
+ [\Ad(g_i)(\Up^0) - z_i^{-1}  E' da_i- (dg_i) g_i^{-1}, \Ad(g_i) (\a)] - \frac{1}{z_i} [\o_i^{-1} da_i, H_T]
\in \Om^1_{\De} (\fg_i).
\end{multline}

Now, by \eqref{ex},
\begin{multline}\label{ex5}
[\Ad(g_i)\Up^0 - z_i^{-1} E' da_i - (dg_i) g_i^{-1}, z_i^{-1} E' a_i]  
\in \\
[\phi_i (\Ad(g_i)\Up^0) - (dg_i) g_i^{-1}, z_i^{-1} E' a_i] -
\sum_{j \in I \backslash \{i\}} \frac{1}{(\xi_j - \xi_i)^2}[\Ad(g_i g_j^{-1} )(E'), E' ] a_i da_j +
\Om^1_{\De}(\fg_i^1)  \\
= [ N' da_i + D_i \frac{da_i}{a_i}, z_i^{-1} E'a_i] -
\sum_{j \in I \backslash \{i\}} \frac{1}{(\xi_j - \xi_i)^2}[\Ad(g_i g_j^{-1} )(E'), E' ] a_i da_j + 
\Om^1_{\De}(\fg^1_i).
\end{multline}
On the other hand,
\begin{multline}\label{ex6}
[\Ad(g_i)\Up^0 - z_i^{-1} E'da_i, N'a_i + D_i + X_i-n H_T]  \in \\
 [\phi_i (\Ad(g_i)\Up^0), N'a_i + D_i + X_i- n H_T] + \Om^1_{\De} (\fg_i^1) \\
=[\sum_{j \in I \backslash \{i\}}\frac{1}{\xi_i - \xi_j} \Ad(g_i g_j^{-1})(E' da_j), 
N'a_i + D_i + X_i-n H_T] + \Om^1_{\De}(\fg_i^1).
\end{multline}
Finally,
\begin{equation}\label{ex7}
[ z_i^{-1} E' da_i, H_T] - [\o_i^{-1} da_i, H_T] = [-N' da_i, H_T].
\end{equation}
We substitute \eqref{ex5}, \eqref{ex6}, and \eqref{ex7} into
\eqref{ex4}:
\begin{multline*}
d ( D_i + X_i) - [(dg_i) g_i^{-1}, N' a_i+ D_i+  X_i- n H_T]  \equiv 
[-N' da_i, n H_T]+
\\
\sum_{j \in I \backslash \{i\}} \frac{1}{\xi_j - \xi_i} [E' da_i, \Ad(g_i g_j^{-1}) (N'a_j + D_j + X_j - n H_T)]
-\frac{1}{ (\xi_j - \xi_i)^2} [ E', \Ad(g_i g_j^{-1}) E' ]a_j da_i \\
 + \sum_{j \in I \backslash \{i\}} \frac{1}{(\xi_j - \xi_i)^2}[\Ad(g_i g_j^{-1} )(E'), E' ] a_i da_j \\ -
 \sum_{j \in I \backslash \{i\}}\frac{1}{\xi_i - \xi_j} [\Ad(g_i g_j^{-1})(E' da_j), 
N'a_i + D_i + X_i-n H_T]  \pmod{ \Om^1_{\De}(\fg_i^1)}.
\end{multline*}

Since $[N'da_i,  H_T] = \frac{1}{n} N' da_i$, applying
$\Ad(g_i^{-1})$ and dividing both sides by $-n $ shows that this equation
is equivalent to \eqref{exsecond}.
\end{proof}

\end{document}